\newtheorem{lemma}{Lemma}
\newtheorem{proposition}{Proposition}
\newtheorem{theorem}{Theorem}
\newtheorem{corollary}{Corollary}
\newtheorem{definition}{Definition}
\newtheorem{remark}{Remark}
\newtheorem{example}{Example}
\newcommand{\Q}{\mathbb{Q}}
\newcommand{\Z}{\mathbb{Z}}
\newcommand{\N}{\mathbb{N}}
\newcommand{\R}{\mathbb{R}}
\newcommand{\B}{\mathcal{B}}
\newcommand{\M}{\mathcal{M}}
\newcommand{\D}{{\rm Diff}}
\def\Diff{\mathrm{Diff}}
\def\R{\mathbb R}
\def\Z{\mathbb Z}
\def\diff{\mathfrak{diff}}
\def\le{\leqslant}
\def\ge{\geqslant}
\def\kappa{\varkappa}
\def\epsilon{\varepsilon}
\def\cN{\mathcal{N}}
\newfont{\weird}{cmff10}
\normalsize \pagestyle{plain}
\title{{Diffeomorphisms groups of tame Cantor sets and Thompson-like groups}}
\date{}
\author{\begin{tabular}{cc}      Louis Funar &  Yurii Neretin\\      
\small \em Institute Fourier, UMR 5582        &\small \em Math.Dept., University of Vienna, Nordbergstrasse 15 \\  
\small \em Department of Mathematics &\small \em  Vienna, Austria  \\      
\small \em University Grenoble Alpes  &\small \em Institute for Theoretical and Experimental Physics  \\      
\small \em CS40700 &\small\em Mech. Math. Department, Moscow State University
\\ \small \em  38058 Grenoble cedex 9 &\small \em  Kharkevich Institute for Information Transmission Problems\\  
\small \em France &  \small \em Moscow, Russia \\ 
\small \em e-mail: {\tt louis.funar@univ-grenoble-alpes.fr}      
& \small \em e-mail: {\tt neretin@mccme.ru } \\      
\end{tabular}      
}
\begin{document}

\maketitle

\begin{abstract}
The group of $\mathcal C^1$-diffeomorphisms  of any sparse Cantor subset 
of a manifold is countable and discrete (possibly trivial). 
Thompson's groups come out of this construction when we consider 
central ternary Cantor subsets of an interval. Brin's higher dimensional 
generalizations $nV$ of Thompson's group $V$ arise when we consider 
products of central ternary Cantor sets. 
We derive that the $\mathcal C^2$-smooth mapping class group of a 
sparse Cantor sphere pair is a discrete  countable group and 
produce this way versions of the braided Thompson groups.
\end{abstract}

\textbf{2000 MSC classification: } 20F36, 37C85, 57S05, 57M50, 54H15.

\textbf{Keywords:} mapping class group, infinite type
surfaces, braided Thompson group, diffeomorphisms group, Cantor set, 
self-similar sets, iterated function systems.

%\section{Introduction}

\section{Introduction}
Differentiable structures on Cantor sets have first been considered by Sullivan 
in \cite{Sul}. Our aim is to consider groups of diffeomorphisms of Cantor sets, 
mapping class groups of Cantor punctured spheres and their relations with Thompson-like 
groups. In particular, the usual Thompson groups 
(see \cite{CFP}) can be retrieved as diffeomorphisms 
groups of Cantor subsets of suitable spaces (a line, a circle or a 2-sphere).

Let $M$ be a compact manifold and $C\subset M$ be a Cantor set, namely a {\em compact 
totally disconnected subset without isolated points}. 
Any two Cantor sets are homeomorphic as topological spaces. But if $M$ has dimension $m\geq 3$ 
there exist Cantor sets $C_1$ and $C_2$ embedded into $M$ so that 
there is no ambient homeomorphism of $M$ carrying $C_1$ into $C_2$. 
One says that $C_1$ and $C_2$ are not {\em topologically equivalent}  Cantor set embeddings. 

A Cantor subset of $\R^m$ is {\em   tame} if  there is a homeomorphism of $\R^m$ 
which sends it within a coordinates axis. 
All Cantor sets in $\R^m$, for $m\leq 2$ are tame, but there exist 
uncountably many {\em wild} (i.e. not tame) Cantor sets in $\R^m$, for every $m\geq 3$ (see \cite{Bla}).

One defines similarly {\em smooth equivalence} and {\em smoothly tame} Cantor sets. 
The analogous story for diffeomorphisms is already interesting for $m= 1$, as 
Cantor subsets of $\R$ might be differentiably non-equivalent. 
Our main concern is the image of the group of 
diffeomorphisms of $M$ which preserve a Cantor set $C$ into the 
automorphisms group of $C$. Under fairly general conditions we are able to prove 
that this is a countable group, thereby providing an interesting class of discrete groups. 
For Cantor sets obtained from a topological iterated function system the associated groups 
are non-trivial, while for many self-similar Cantor sets these are versions of Thompson's groups.

\section*{A. General countability statements}
\subsection{Pure mapping class groups}

\begin{definition}
Let $M$ be a compact orientable manifold and $C\subset M$ a Cantor subset. 
We denote by $\D^k(M,C)$ the group of diffeomorphisms 
of class $\mathcal C^k$ of $M$ sending $C$ to itself, by 
 $\D^{k,+}(M,C)$ the subgroup of orientation preserving diffeomorphisms and 
 by $P\D^{k,+}(M,C)$ the subgroup of {\em pure} orientation preserving diffeomorphisms, i.e.  
pointwise preserving  $C$. 

The {\em $\mathcal C^k$-mapping class group} $\M^{k,+}(M,C)$  is the group $\pi_0(\D^{k,+}(M,C))$ of 
$\mathcal C^k$-isotopy classes of orientation preserving diffeomorphisms rel $C$ (i.e. which are identity on $C$) 
of class $\mathcal C^k$.  
The  {\em pure $\mathcal C^k$-mapping class group}  $P\M^{k,+}(M,C)$ is the group $\pi_0(P\D^{k,+}(M,C))$ of 
$\mathcal C^k$-isotopy classes of pure orientation preserving  $\mathcal C^k$-diffeomorphisms rel $C$.
\end{definition}

In a similar vein but a different context, the group of homeomorphisms $\D^0(M,A)$ associated to 
a manifold $M$ and a countable dense set $A\subset M$ 
was studied recently in \cite{DvM}. The authors 
proved there that  $\D^0(M,A)$ is either isomorphic to 
a countably infinite product of copies of $\Q$, when $M$ is 1-dimensional,  
or the Erd\"os subgroup of $l^2$ elements, otherwise. 
In the present setting, when $A$ is closed  and the 
smoothness is at least $\mathcal C^1$ the situation 
is fundamentally different.

If we write $C=\cap_{j=1}^{\infty} C_j$, where 
each $C_j$ is  a compact submanifold of $M$ and $C_{j+1}\subset {\rm int}(C_{j})$ for all $j$, 
then the sequence  $\{C_j\}$ is called a {\em defining sequence} 
for $C$. It is known that $C$ is a tame Cantor set if and only if we can choose $C_j$ to be finite unions of 
disjoint disks. 

\begin{definition}
The class of $\varphi$ in  $P\M^{k,+}(M,C)$ is {\em compactly supported} 
if there exists some defining sequence  $\{C_j\}$ of $C$ and some 
$n$  for which the restriction of $\varphi$ to $C_n$ is 
$\mathcal C^k$-isotopic to identity rel $C$.
\end{definition}

Note that the property of being compactly supported is  independent of the choice of the 
defining sequence.

Our first result is the following:

\begin{theorem}\label{compact}
Let $C$ be a  $\mathcal C^k$-tame Cantor set, namely a Cantor subset of a closed interval  $\mathcal C^k$-embedded in a compact orientable manifold $M$ of dimension at least 2. If $k\geq 2$, then all classes in the 
group $P\M^{k,+}(M,C)$  are compactly supported. In particular, 
the group $P\M^{k,+}(M,C)$ is countable.
\end{theorem}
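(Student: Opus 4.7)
The plan is to prove the stronger claim that every $\varphi \in P\D^{k,+}(M,C)$ is $\mathcal C^k$-isotopic rel $C$ to a diffeomorphism equal to the identity on an open neighborhood $V$ of $C$. Once this is done, any defining sequence $\{C_j\}$ with $C_n \subset V$ gives $\varphi|_{C_n} = \mathrm{id}$, establishing compact support; the class of $\varphi$ is then represented by a diffeomorphism of the compact manifold-with-boundary $M \setminus \mathrm{int}(C_n)$, so $P\M^{k,+}(M,C)$ is a direct limit of countable mapping class groups of compact manifolds, hence countable.

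By tameness, fix a $\mathcal C^k$-arc $I \supset C$ and a tubular neighborhood $U \cong I \times \R^{m-1}$ of $I$ with coordinates $(x,y)$. I would construct the required ambient isotopy in three stages, each supported in $U$ and fixing $C$ pointwise. \emph{(i) Straighten $\varphi^{-1}(I)$ to $I$.} At each $p \in C$, the secants from $p$ to nearby points $q \in C \subset I \cap \varphi^{-1}(I)$ converge to a direction lying in the two $1$-dimensional subspaces $T_p I$ and $T_p \varphi^{-1}(I)$, so these coincide. Hence $\varphi^{-1}(I)$ is locally a graph $y = f(x)$ with $f \in \mathcal C^k$, $f|_C = 0$, $Df|_C = 0$, and the cutoff shear $\psi_t(x,y) = (x, y - t\chi(x)f(x))$ gives a $\mathcal C^k$-ambient isotopy rel $C$; replacing $\varphi$ by $\varphi \circ \psi_1^{-1}$, we may assume $\varphi$ preserves $I$ near $C$. \emph{(ii) Trivialize $\varphi|_I$.} The restriction is now an orientation-preserving $\mathcal C^k$-diffeomorphism of the arc $I$ fixing $C$; the straight-line interpolation $(1-t)\varphi|_I + t\,\mathrm{id}_I$ stays a $\mathcal C^k$-diffeomorphism (positive derivative throughout), and extends ambiently via the tubular product structure. \emph{(iii) Trivialize transversally.} One further normalizes $D\varphi|_I$ to the identity, using the path-connectedness of $GL^+(m-1,\R)$ together with a smooth fiberwise affine family; then a cutoff rescaling isotopy, interpolating via a bump function depending on $|y|$ between $\varphi$ on the bulk of $U$ and the identity on an increasingly thin tube around $I \cap C$, reduces $\varphi$ to the identity on a neighborhood of $C$.

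The main obstacle is stage (iii), where the $\mathcal C^2$ hypothesis is indispensable: the second-order Taylor estimate $\varphi(x,y) = (x,y) + O(|y|^2)$ on the core, available once $\varphi|_I = \mathrm{id}$ and $D\varphi|_I = I$, ensures that the interpolation is a diffeomorphism uniformly in $t$ on a thin enough tube and guarantees $\mathcal C^k$-continuity of the isotopy as the tube shrinks. A merely $\mathcal C^1$ control $o(|y|)$ would not suffice to bound the rescaled derivatives, and indeed the theorem fails in the $\mathcal C^1$ setting. Assembling the three local stages into a single global ambient isotopy rel the closed set $C$ is a standard partition-of-unity argument adapted to $C$, and the fiberwise normalization of $D\varphi|_I$ in stage (iii) uses only the connectedness of $GL^+(m-1,\R)$, valid for $m \geq 2$.
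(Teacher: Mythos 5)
Your strategy is sound and its first half coincides with the paper's: the decisive observation in both arguments is that, since $C$ has no isolated points and $\varphi$ fixes $C$ pointwise, secants through pairs of points of $C$ force the tangent line of $\varphi(E)$ (equivalently of $\varphi^{-1}(E)$) to agree with that of $E$ at every point of $C$, so that over a neighborhood of $C$ the image arc is a uniformly flat graph. From there the two proofs genuinely diverge. The paper isotopes the image curve $\xi=\varphi\circ\gamma$ back to $\gamma$ rel $C$ (a straight-line homotopy over the finite union of short intervals $I_{\varepsilon}$, plus one compactly supported diffeomorphism handling the finitely many arcs over the large gaps), and then finishes with soft ambient topology: an orientation-preserving diffeomorphism fixing the arc $E$ pointwise is isotopic rel $E$ to one supported off a disk neighborhood of $E$, via the circular order of the complementary arcs $F^{\pm}$ when $\dim M=2$ and an isotopy of $\partial D$ in higher dimensions. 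You instead push all the way to a normal form in the tubular neighborhood, killing successively the position of the arc, the tangential part, and the normal derivative, and closing with a Taylor estimate plus rescaling cutoff. Your route gives a more explicit local conclusion (identity on a neighborhood of $C$) at the price of more analysis; note also that the two proofs invoke $k\ge 2$ at different places (the paper uses $|\ddot\xi|<\varepsilon$ to keep the straight-line homotopy embedded, you use it for the $O(|y|^2)$ estimate in stage (iii)).

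A few points in your sketch need more care, though none is fatal. The phrase ``locally a graph'' in stage (i) hides exactly what the paper's two main lemmas prove: one must check that the part of $\varphi^{-1}(E)$ lying over a neighborhood of $C$ is a \emph{single-valued} graph over a finite union of subintervals covering $C$ (this follows from the uniform closeness of the tangent of the curve to that of $E$ on short intervals with endpoints in $C$, which makes the first coordinate monotone on each piece and the pieces project onto disjoint ranges), while the distant strands of $\varphi^{-1}(E)$ over the large gaps are harmless only because a shear is an ambient diffeomorphism no matter what it does to them. In stage (iii), $D_x\varphi$ along the arc is merely block upper-triangular for the splitting $T_xE\oplus N_xE$, so besides the $GL^+(m-1,\R)$ block you must also cancel the off-diagonal shear block; and the cutoff interpolation requires a global injectivity argument in addition to the pointwise derivative bound (standard, since the map agrees with $\varphi$ outside the tube and is a $\mathcal C^1$-small perturbation of the identity inside). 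Finally, your claim that the theorem ``fails in the $\mathcal C^1$ setting'' is not asserted in the paper, and the reason you give is not correct: a uniform $o(|y|)$ bound on $\varphi-\mathrm{id}$ along the core already makes the rescaled derivatives tend to $0$ with the tube radius.
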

In contrast, the  topological mapping class group  $P\M^{0,+}(S^2,C)$ is uncountable.  
We might expect $P\M^{k,+}(M,C)$ be countable for $k\geq 2$ even when  $C$ is only a $\mathcal C^0$-tame Cantor 
subset of $M$. 

The following precises the second statement in Theorem \ref{compact}: 
\begin{corollary}\label{inductive}
Let  $C$ be a $\mathcal C^k$-tame Cantor subset of a compact orientable 
surface $M$ and  $\{C_j\}$ be a defining sequence  for $C$
consisting of finite unions of disjoint disks.  
If $k\geq 2$, then  $P\M^{k,+}(M,C)$ coincides with the 
inductive limit $\lim_{j\to \infty} P\M^{k,+}(M-{\rm int}(C_j))$ of 
pure mapping class groups of compact subsurfaces.  
\end{corollary}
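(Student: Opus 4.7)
The plan is to construct natural group homomorphisms $\iota_j\colon P\M^{k,+}(M-\text{int}(C_j))\to P\M^{k,+}(M,C)$ compatible with the inductive system and to verify that the induced map from the inductive limit is bijective, using Theorem \ref{compact} as the key input in both directions.

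First I would define $\iota_j$ by extension by the identity on $C_j$: since $C\subset\text{int}(C_j)$, any $\mathcal C^k$-diffeomorphism of $M-\text{int}(C_j)$ that restricts to the identity on $\partial C_j$ extends to a pure $\mathcal C^k$-diffeomorphism of $M$, and analogously for isotopies relative to the boundary. Compatibility of $\iota_j$ with the inclusions $M-\text{int}(C_j)\subset M-\text{int}(C_{j+1})$ (themselves induced by extension across $C_j-\text{int}(C_{j+1})$) assembles these into a homomorphism $\lim_{j\to\infty} P\M^{k,+}(M-\text{int}(C_j))\to P\M^{k,+}(M,C)$.

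For surjectivity, let $[\varphi]\in P\M^{k,+}(M,C)$. Theorem \ref{compact} supplies an index $n$ and a $\mathcal C^k$-isotopy rel $C$ of the restriction $\varphi|_{C_n}$ to the inclusion $C_n\hookrightarrow M$. Since $C_n$ is a compact submanifold (a finite union of disjoint disks), the isotopy extension theorem applied rel $C$ lifts this to an ambient isotopy $\Phi_t$ of $M$ rel $C$ with $\Phi_0=\text{id}$ and $\Phi_1|_{C_n}=\varphi|_{C_n}$. Then $\Phi_1^{-1}\circ\varphi$ is a pure $\mathcal C^k$-diffeomorphism of $M$ that is the identity on $C_n$, hence represents an element of $P\D^{k,+}(M-\text{int}(C_n))$ whose image under $\iota_n$ equals $[\varphi]$.

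Injectivity is the main obstacle. Suppose $\varphi\in P\D^{k,+}(M-\text{int}(C_j))$ has trivial image, so that its extension $\widetilde\varphi$ is connected to the identity by a $\mathcal C^k$-isotopy $\Psi_t$ rel $C$; I need to find some $j'\ge j$ and modify $\Psi_t$ into an isotopy fixing $C_{j'}$ pointwise throughout, which would exhibit $\varphi$ as trivial already in $P\M^{k,+}(M-\text{int}(C_{j'}))$. The approach is parametric: the map $t\mapsto\Psi_t|_{C_{j'}}$ defines a loop based at the inclusion in the space of $\mathcal C^k$-embeddings of $C_{j'}$ into $M$ rel $C$, and for $j'$ sufficiently large this loop should be null-homotopic by a convex-combination argument in local $\mathcal C^k$-charts adapted to the component disks of $C_{j'}$ -- a one-parameter analogue of the estimate underlying Theorem \ref{compact}. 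Combining such a null-homotopy with the parametric isotopy extension theorem then produces the desired modification of $\Psi_t$. The delicate point is extending the smoothness estimate of Theorem \ref{compact} to one-parameter families of embeddings, and this is where the hypothesis $k\ge 2$ again plays the crucial role.
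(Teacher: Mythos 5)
Your surjectivity argument captures exactly what the paper intends: the authors give no separate proof of this corollary, presenting it as a restatement of the compact-supportedness established in Theorem \ref{compact}, and your passage from ``$\varphi|_{C_n}$ is $\mathcal C^k$-isotopic rel $C$ to the inclusion'' to ``$[\varphi]$ lies in the image of $P\M^{k,+}(M-\mathrm{int}(C_n))$'' via the isotopy extension theorem is correct. The construction and compatibility of the maps $\iota_j$ is also fine, modulo the standard collar adjustment needed so that extension by the identity across $\partial C_j$ remains $\mathcal C^k$.

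The gap is in the injectivity step, and you have flagged it yourself. The claim that the loop $t\mapsto\Psi_t|_{C_{j'}}$ in the space of embeddings rel $C$ ``should be null-homotopic by a convex-combination argument in local charts'' is precisely what needs to be proved, and it does not follow formally from Theorem \ref{compact}. The straightening in that proof (the convex combination $\gamma_s=(1-s)\gamma+s\,\xi$ together with Lemmas \ref{local} and \ref{global}) only applies after $C_n$ has been chosen deep enough that the estimates (\ref{smalldot}) and (\ref{smallddot}) hold, and that choice depends on the modulus of continuity of the second derivative of the \emph{single} diffeomorphism being straightened. For the one-parameter family $\Psi_t$ you must: (i) obtain these estimates uniformly in $t$, using compactness of $[0,1]$ and continuity of the isotopy in the $\mathcal C^2$ topology; (ii) rerun the straightening lemmas parametrically, so that the resulting homotopies depend continuously on $t$ and are constant at $t=0,1$; and (iii) redo the final step of the proof of Theorem \ref{compact} (the disk $D$ and the arcs $F^{\pm}$) in families, since what is needed is a null-homotopy of embeddings of the two-dimensional pieces of $C_{j'}$, not merely of the arc $E$. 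None of this is automatic, and without it the kernel of $\lim_{j} P\M^{k,+}(M-\mathrm{int}(C_j))\to P\M^{k,+}(M,C)$ --- equivalently the image of $\pi_1$ of the embedding space under the boundary map of the restriction fibration, which is where boundary Dehn twists about the circles $\partial C_{j'}$ could in principle appear --- is not controlled. As written you have established surjectivity (hence the countability assertion of Theorem \ref{compact}) but not the identification with the inductive limit.
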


Note that, when $N$ is a compact surface  the isomorphism type of $P\M^{k,+}(N)$ is independent of $k$.

\subsection{$\mathcal C^1$-diffeomorphisms groups of Cantor sets}

We now turn to the full mapping class groups. Several  groups  which 
arose recently in the literature could be thought to 
play the role of the mapping class groups for some infinite type surfaces, 
for instance the group $\B$  from \cite{FK1} and its version 
$BV$, which was defined by Brin \cite{brin2} and Dehornoy \cite{dehornoy2}, 
independently. These two groups are braidings of the Thompson group 
$V$ (see \cite{CFP}). Geometric constructions of the same sort permitted 
the authors of \cite{FK2} to derive two  braidings $T^*$ and 
$T^{\sharp}$  of the Thompson group $T$. 

Our next goal is to show that these groups are indeed smooth 
mapping class groups in the usual sense and that most (if not all) 
smooth mapping class groups of Cantor sets are related to some Thompson-like 
groups.

Let assume for the moment that $C\subset M$ is  smoothly tame. 
Set then $\diff^k_M(C)$ and $\diff^{k,+}_M(C)$ for the groups of induced 
transformations of $C$ arising as restrictions of elements 
of $\D^{k}(M,C)$ and $\D^{k,+}(M,C)$, respectively. The $\mathcal C^{k}$ topology on  $\D^{k}(M,C)$
induces a topology on $\diff^k_M(C)$. 

Notice now that we have the exact sequence: 

\begin{equation}\label{fdt}
1\to P\M^{k,+}(M,C)\to \M^{k,+}(M,C)\to \diff^{k,+}_{M}(C)\to 1.
\end{equation}

By Theorem \ref{compact} the group $\M^{k,+}(M,C)$ is discrete countable if and only if 
$\diff^{k,+}_M(C)$ does, when $k\geq 2$ and $M$ is compact (or the interior of a compact manifold). 

Classical Thompson groups can be realized as groups 
of  dyadic piecewise linear homeomorphisms (or bijections) of  an 
interval, circle or a Cantor set (see \cite{CFP,GS}) or  as groups of automorphisms 
at infinity of graphs (respecting or not the planarity), as in \cite{Ner}.  
Notice that the more involved  construction from \cite{GS} provides 
embeddings of Thompson groups into the group of diffeomorphisms of the circle,  
admitting invariant minimal Cantor sets. In particular, Ghys and Sergiescu obtained  
embeddings as discrete subgroups of the group of diffeomorphisms (see \cite{GS}, Thm. 2.3).  

In our setting we see that whenever it is discrete and countable 
the group $\M^{k,+}(M,C)$ is the braiding of 
$\diff^{k,+}_{M}(C)$ according to Corollary \ref{inductive}, as in the cases studied 
in \cite{brin2,dehornoy2,FK1,FK2}.
This pops out the question whether $\diff^{k,+}_{M}(C)$ is a Thompson-like group, in general. 
We were not able to solve this question in full generality and actually 
when $C$ is a generic Cantor set of the interval we expect the 
group  $\diff^{k,+}_{M}(C)$ be trivial. 
To this purpose we introduce the following property of Cantor sets. 

\begin{definition}
The Cantor subset $C$ of an interval is {\em $\sigma$-sparse} if,  
for any $a, b\in C$, there is a complementary
interval $(\alpha,\beta)\subset (a,b)\cap \R\setminus C$ such that 
\begin{equation}
\beta-\alpha\ge \sigma (b-a).
\end{equation}
Moreover $C$ is {\em sparse} if it is $\sigma$-sparse for some $\sigma >0$. 
\end{definition} 

Set $\diff^k(C)=\diff^k_{\R}(C)$, $\diff^{k,+}(C)=\diff^{k,+}_{\R}(C)$ for the sake of notational simplicity. 

\begin{theorem}\label{th:diff-countable}
If $C$ is a sparse Cantor subset of $\R$, 
then the group $\diff^1(C)$ is countable. If $C$ is a sparse Cantor set in $S^1=\R/\Z$, then 
$\diff_{S^1}^1(C)$ is countable. 
\end{theorem}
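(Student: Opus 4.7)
The plan is to embed $\diff^1(C)$ into a countable combinatorial set and exploit sparsity to cut down the image. The circle case reduces to the line case by lifting a $\mathcal C^1$-diffeomorphism of $S^1=\R/\Z$ preserving $C$ to a $\Z$-equivariant $\mathcal C^1$-diffeomorphism of $\R$ preserving $\pi^{-1}(C)$, which is again sparse; the argument then applies to one fundamental domain. So I focus on a $\sigma$-sparse $C\subset\R$.

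Any $f\in\diff^1(C)$ is the restriction of some $F\in\Diff^1(\R,C)$, and $F$ permutes the countable set $\mathcal G$ of connected components of $[\min C,\max C]\setminus C$. Since gap endpoints are dense in $C$, the induced permutation $\pi_F$ of $\mathcal G$ determines $f|_C$, giving an injection $\diff^1(C)\hookrightarrow\mathrm{Sym}(\mathcal G)$. It then suffices to prove that only countably many such $\pi_F$ arise.

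To extract rigidity, I would iterate the sparsity condition around any $x\in C$ to build a descending chain of intervals $[a_n,b_n]\ni x$ with $a_n,b_n\in C$ and $b_n-a_n\le(1-\sigma)^n(b_0-a_0)$. Because $F$ is $\mathcal C^1$, the secant ratios $(F(b_n)-F(a_n))/(b_n-a_n)$ converge to $F'(x)$ at a rate controlled by the modulus of continuity of $F'$ on $[\min C,\max C]$. By the mean value theorem, $|\pi_F(I)|/|I|=F'(\xi_I)$ for some $\xi_I\in I$, so on small scales $\pi_F$ is essentially a local scaling by $F'$.

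The main obstacle is converting this asymptotic analytic rigidity into a genuine combinatorial finiteness. I plan to decompose $C$ by a rooted tree obtained by recursively splitting at the largest gap inside each piece; sparsity guarantees that each branching step shrinks the relevant interval by a factor at most $1-\sigma$, so the tree has a uniform geometric structure. Only finitely many gaps exceed any positive threshold $\delta$ (since gap lengths sum to $\max C-\min C$), so the action of $\pi_F$ on the "large" gaps is finite combinatorial data. Below scale $\delta$, $\pi_F$ must be compatible with $F'$ up to an error shrinking with $\delta$; propagating this down the tree, using sparsity at each level to rule out spurious matches, should leave only countably many admissible extensions of the large-gap data. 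The hard part is making the propagation argument precise: the $\mathcal C^1$ control is only asymptotic, so turning it into strict discreteness of admissible permutations at each finite scale will likely require a careful inductive compactness argument on matchings of sub-Cantor-sets inside the tree, together with the observation that once $F'$ is determined at one $C$-point per branch it is forced continuously along the branch by sparsity.
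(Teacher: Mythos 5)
Your proposal has a genuine gap, and it sits exactly where you say the "hard part" is: converting the asymptotic $\mathcal C^1$ control into combinatorial finiteness. The modulus-of-continuity estimate on secant ratios tells you that on small scales the gap permutation is approximately a scaling by $F'$, but "approximately" is not enough to rule out branching: at each level of your tree there could a priori be several admissible matchings consistent with the error bound, and countably many choices at each of countably many scales is perfectly compatible with uncountably many total permutations. What closes this is a \emph{quantized} rigidity dichotomy at fixed points, which is the actual content of the paper's argument: if $\varphi\in\Diff^1(\R,C)$ fixes $a\in C$, then either $|\varphi'(a)-1|\ge\sigma$ (Lemma \ref{nonflat}: sparsity forces the left endpoint of a relative-$\sigma$ gap to jump across that gap, so some secant slope is $\ge 1+\sigma$), or $\varphi'(a)=1$ and $\varphi$ is $1$-flat at $a$, in which case $\varphi|_C$ is the identity in a neighborhood of $a$ (Lemma \ref{flat}, whose proof needs a genuinely dynamical argument with accumulating fixed points, not just the mean value theorem). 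Combined with a connectedness argument (take $\xi=\sup\{x: \psi \text{ fixes } C\cap(-\infty,x]\}$ and show $\xi$ cannot be interior, treating right points of $C$ separately), this shows that any $\psi$ with $|\psi'-1|<\sigma$ on $C$ restricts to the identity on $C$; i.e.\ the identity is \emph{isolated} in $\diff^1(C)$, and countability then follows from separability of the $\mathcal C^1$ topology. Note also that your goal of directly counting the permutations $\pi_F$ is strictly harder than what is needed: discreteness plus separability does the job without any enumeration of admissible matchings.

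A secondary problem is the circle-to-line reduction. The preimage $\pi^{-1}(C)\subset\R$ is \emph{not} sparse in the sense of the definition: its complementary intervals have length bounded by the largest gap of $C$, while $\sigma(b-a)$ is unbounded as $a,b$ move apart, so the sparsity inequality fails for distant pairs. More importantly, the lift satisfies $\tilde F(x+1)=\tilde F(x)+1$ but need not fix any point of $C$, so there is no fundamental domain on which the line argument (which is anchored at the fixed point $\min C$) applies. The paper instead works directly on $S^1$: it tracks the orbit of a maximal complementary interval $J$, shows only finitely many intervals $\psi(J)$ can occur (they are maximal, pairwise disjoint or equal, and of length $\ge(1-\delta)|J|$), and reduces two maps with $\psi(J)=\varphi(J)$ to the fixed-point case. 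Some such device is needed in your write-up as well.
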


Theorem \ref{th:diff-countable} cannot be extended to all Cantor 
sets $C$, without additional assumptions, as we can see from 
the examples given in section \ref{examples}.

We have the following more general version of the previous result: 
\begin{theorem}\label{V-type}
If $C$ is a sparse Cantor subset of an interval 
$\mathcal C^1$-embedded into a compact orientable manifold $M$,  
then the group $\diff^1_{M}(C)$ is countable and discrete.
\end{theorem}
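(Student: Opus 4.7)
The plan is to push the problem down to $\R$ by parametrizing the ambient interval, and then invoke Theorem \ref{th:diff-countable}. Fix a $\mathcal{C}^1$-embedding $\gamma:[0,1]\to M$ whose image contains $C$, and set $\tilde C:=\gamma^{-1}(C)$; since $\gamma$ is bi-Lipschitz onto its image, $\tilde C$ is a sparse Cantor subset of $\R$. Each $\varphi\in\D^1(M,C)$ induces a bijection $\tilde\varphi:=\gamma^{-1}\circ\varphi\circ\gamma|_{\tilde C}$ of $\tilde C$, and I would show that this $\tilde\varphi$ lies in a countable, discrete group naturally built out of $\diff^1(\tilde C)$.

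The core step is a tangent-preservation statement: at every $p=\gamma(x)\in C$, the derivative $D\varphi_p$ must send the tangent line to $\gamma$ at $x$ into the tangent line to $\gamma$ at $\tilde\varphi(x)$. To see this, I would expand the identity $\varphi(\gamma(y))=\gamma(\tilde\varphi(y))$ to first order at $x$. Bi-Lipschitz estimates on $\varphi^{\pm 1}$ and $\gamma^{\pm 1}$ give $|\tilde\varphi(y)-\tilde\varphi(x)|=O(|y-x|)$, which lets me absorb the expected second remainder and conclude
\begin{equation*}
D\varphi_p(\gamma'(x))(y-x) = \gamma'(\tilde\varphi(x))(\tilde\varphi(y)-\tilde\varphi(x)) + o(y-x).
\end{equation*}
Projecting onto the orthogonal complement of $\gamma'(\tilde\varphi(x))$ in $T_{\varphi(p)}M$ and letting $y\to x$ inside the perfect set $\tilde C$ kills the transverse component, forcing $D\varphi_p(\gamma'(x))=\alpha(x)\gamma'(\tilde\varphi(x))$ for a continuous, nowhere-zero function $\alpha:\tilde C\to\R$ (non-vanishing because $D\varphi_p$ is invertible). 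Plugging back gives $\tilde\varphi(y)-\tilde\varphi(x)=\alpha(x)(y-x)+o(y-x)$, so $\tilde\varphi$ is differentiable on $\tilde C$ with continuous nonzero derivative $\alpha$.

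From here the argument should finish cleanly. The sign of $\alpha$, being locally constant, partitions $\tilde C$ into clopen orientation-preserving and orientation-reversing parts, so it suffices to treat one case, say $\alpha>0$. Local order-preservation plus compactness of $\tilde C$ then yield a finite clopen partition $\tilde C=\bigsqcup_i A_i$ on each block of which $\tilde\varphi$ is a genuinely order-preserving $\mathcal{C}^1$-bijection onto a clopen subset of $\tilde C$; each block-wise Whitney extension then lies in $\diff^1(\tilde C)$, which is countable by Theorem \ref{th:diff-countable}. Together with the countable combinatorial data of the two partitions and their matching, this gives countability of $\diff^1_M(C)$. Discreteness follows because any $\varphi$ that is $\mathcal{C}^1$-close to the identity on $M$ produces $\alpha\approx 1$ and $\tilde\varphi\approx\mathrm{id}$ uniformly on $\tilde C$, and the sparseness-based $\mathcal{C}^1$-rigidity implicit in the proof of Theorem \ref{th:diff-countable} then forces $\tilde\varphi=\mathrm{id}_{\tilde C}$ for all sufficiently small perturbations.

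The main obstacle is the tangent-preservation step. Since $\varphi$ has no a priori reason to map the curve $\gamma([0,1])$ into itself, one must use the perfectness of $\tilde C$ in an essential way to eliminate the $\gamma$-transverse component of $D\varphi_p(\gamma'(x))$; this is what bridges the gap between the ambient $\mathcal{C}^1$ structure on $M$ and the intrinsic one-dimensional $\mathcal{C}^1$ structure on $\tilde C$ required to apply Theorem \ref{th:diff-countable}. A secondary technical matter is the bookkeeping that repackages the possibly non-monotonic bijection $\tilde\varphi$ as a finite combinatorial shuffle of $\mathcal{C}^1$-diffeomorphisms between clopen pieces of $\tilde C$, which is where the Thompson-like flavor of $\diff^1_M(C)$ naturally emerges.
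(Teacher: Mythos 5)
Your proposal is correct and follows essentially the same route as the paper: the key step in both is that the component of $D\varphi$ transverse to the embedded arc vanishes at every point of $C$ because $C$ is perfect (the paper phrases this as $D_a(h\circ\varphi)=0$ for a collar projection $h$, you phrase it via a first-order expansion and orthogonal projection), after which one extracts a finite covering of $C$ by disjoint intervals on which the induced map is a $\mathcal C^1$-diffeomorphism of intervals preserving $C$, and concludes by combining Theorem \ref{th:diff-countable} with the countability of the combinatorial matching data. The only cosmetic difference is that the paper realizes the reduction by an explicit isotopy flattening $\varphi$ onto the curve inside a collar, whereas you use a Whitney extension of the blockwise induced map; both are adequate.
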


Although we only considered smoothly tame Cantor subsets above, 
there is a large supply of topologically tame Cantor subsets in any dimensions 
for which we can prove the countability:

\begin{theorem}\label{products}
Let $C_i$ be sparse Cantor sets in $\R$ and  $C=C_1\times C_2\times\cdots C_n\subset \R^n$.
Then the group $\diff_{\R^n}^1(C)$ is countable.
\end{theorem}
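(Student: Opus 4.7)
The plan is to adapt the strategy of Theorem~\ref{V-type} by exploiting the product structure of $C$, in the spirit of Brin's higher-dimensional Thompson groups $nV$. Fix, for each $i$, a defining sequence $C_i=\bigcap_j C_i^{(j)}$ with $C_i^{(j)}$ a finite disjoint union of closed intervals; then $\prod_i C_i^{(j)}$ gives a defining sequence of $C$ by ``boxes'' of the form $B=\prod_i(C_i\cap I_i)$. Sparseness of each $C_i$ yields a uniform constant $\sigma>0$ controlling the ratio between the width of a box in direction $e_i$ and the length of a gap of $C$ inside that box in the same direction.

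The first key step is a \emph{coordinate rigidity lemma}: for every $f\in\diff^1_{\R^n}(C)$ and every $p\in C$, the derivative $A=df(p)$ has the form $A=D_p P_p$, where $P_p$ is a signed permutation matrix and $D_p$ is a nonsingular diagonal matrix. The argument runs as follows: for $h\in(C_i-p_i)$ small, the point $p+h e_i$ lies in $C$, so $f(p+h e_i)=f(p)+hAe_i+o(h)$ must also lie in $C=\prod_j C_j$. Writing $Ae_i=\sum_j a_{ji}e_j$, the $j$-th coordinate of this image is $f(p)_j+ha_{ji}+o(h)\in C_j$. If two entries $a_{ji}$ and $a_{ki}$ with $j\ne k$ were simultaneously nonzero, the sparse gap-aspect-ratio constraints on $C_j$ and $C_k$ would rule out such a rescaled injection of the sparse set $C_i-p_i$ into both $C_j$ and $C_k$ at every scale of the defining sequence. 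Iterating at arbitrarily deep levels of the defining sequence upgrades the approximate alignment of $Ae_i$ with a coordinate axis to exact alignment; hence $Ae_i\in\R\,e_{\sigma(i)}$ for some permutation $\sigma$, which by continuity of $df$ is locally constant on the compact totally disconnected set $C$.

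Once this rigidity is in hand, on a sufficiently deep box $B=\prod_i(C_i\cap I_i)$ containing $p$, the restriction $f|_{C\cap B}$ splits, after factoring out the coordinate permutation $P_p$, into an $n$-tuple of $\mathcal C^1$-diffeomorphisms of the factor intervals, each sending the sparse set $C_i\cap I_i$ into the sparse set $C_{\sigma(i)}\cap J_{\sigma(i)}$. Theorem~\ref{V-type} applied to each factor then tells us that, at a sufficiently deep level of the defining sequence, each factor map is described by finite combinatorial data. By compactness of $C$, finitely many boxes at a common depth cover it; the restriction of $f$ to $C$ is thus determined by (a) a bijection between these boxes, (b) a coordinate permutation at each source box, and (c) the finite data for each factor map. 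Since the collection of such data is countable, $\diff^1_{\R^n}(C)$ is countable.

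The main obstacle is the coordinate rigidity lemma. Under only $\mathcal C^1$ regularity one cannot simply take an infinitesimal limit at a single point; instead, the uniform sparseness constant $\sigma$ at every scale of the defining sequence must be fed into the argument to exclude a diagonal direction for $Ae_i$. Making this uniform rescaling argument work across all depths of the product tree, and in particular separating the leading linear term from the $o(h)$ error when several $a_{ji}$ could a priori be nonzero, is the delicate part of the proof.
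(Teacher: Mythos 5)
Your overall strategy (force the derivative to align with the coordinate axes, then reduce to the one--dimensional case and count) runs parallel to the paper's, but the two steps that carry all the weight are not established, and the justification you offer for the first one does not work. Your coordinate rigidity lemma is argued only from the images of the coordinate slices $\{p+he_i:\ h\in C_i-p_i\}$: you claim that if $a_{ji}\neq 0$ and $a_{ki}\neq 0$ for $j\neq k$, then the sparse set $C_i-p_i$ cannot be rescaled--injected into both $C_j-f(p)_j$ and $C_k-f(p)_k$ at every scale. That is false: for $C_i=C_j=C_k=C_\lambda$ and suitable base points, $h\mapsto h$ and $h\mapsto \lambda^{-m}h$ both map $C_\lambda$ near $0$ into $C_\lambda$, so a column with two nonzero entries is perfectly compatible with the slice constraints at all scales; sparseness alone does not forbid a set from embedding affinely into itself at many scales. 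The conclusion of your lemma is true, but the paper obtains it by a different mechanism: at a \emph{corner} point the spherical convex hull of the unit tangent spread is the simplex $\mathcal L_{0,n}$ whose extremal points are exactly $e_1,\dots,e_n$, so $U(D_a\varphi)$ must permute them; one then restricts to $\varphi$ with $\|D_a\varphi-\mathbf 1\|<\sqrt 2$ on $C$ (which suffices, since the goal is to show the identity is isolated, hence the group discrete and countable) to force $U(D_a\varphi)=\mathbf 1$ at corner points, and uses density of corner points together with continuity of $D\varphi$ to get a diagonal differential on all of $C$. Some such input beyond the coordinate slices through $p$ is unavoidable.

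The second gap is the assertion that once $df$ is diagonal on $C$, the restriction $f|_{C\cap B}$ ``splits'' into an $n$-tuple of one--variable diffeomorphisms to which Theorem \ref{th:diff-countable}/\ref{V-type} can be applied. A diagonal differential along a set with empty interior does not make a map a product map, even on that set: ruling out, say, $f(x,y)=(u(x,y),y)$ with $u(\cdot,y)$ depending on which clopen piece of $C_2$ contains $y$ requires an argument. This is precisely the content of the paper's Proposition \ref{stabilizers}, proved by induction on $n$: one composes with the projections onto coordinate hyperplanes, proves injectivity of the projected maps (Lemma \ref{injectivity}), shows they preserve $C$ on the faces (Lemma \ref{surj}, via Jordan's theorem), applies the inductive hypothesis, and then kills the remaining ``height'' function $u$ on $C$ by a Jordan--curve and Lipschitz estimate. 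As written, your proposal assumes its conclusion at this step; the final counting is fine once these two lemmas are actually in place.
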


Observe that the Lebesgue measure of a sparse Cantor set is zero. In this direction, notice that 
Deroin, Kleptsyn and Navas recently proved  that invariant Cantor sets of groups of 
real-analytic circle diffeomorphisms have zero Lebesgue measure (see \cite{DKN2}, Cor. 1.17). 
The result cannot be extended to ${\mathcal C}^1$-diffeomorphisms, due to the Denjoy 
counter-examples, but it might hold more generally for all 
${\mathcal C}^2$-diffeomorphisms of the circle according to a conjecture of 
Hector (see the discussion in \cite{DKN2} and \cite{DKN3}, Conj.1.11).

The key point is to show that the stabilizer of a point in this group is a finitely generated 
abelian group (see Lemma \ref{l:stabilizer},  Proposition \ref{stabilizers}). The discreteness of the stabilizers 
seems to be the counterpart to the following unpublished theorem of G. Hector (see \cite{N}):
If the subgroup $G$ of the group $\Diff^{\omega}(S^1)$ 
of analytic diffeomorphisms of the circle  has an exceptional minimal set,  
then the stabilizer  $G_a$ of any point $a$ of the circle 
in $G$ is either trivial or $\Z$. As a corollary every subgroup of $\Diff^{\omega}(S^1)$ 
having a minimal Cantor set is countable.  This is, of course,  not true for subgroups of $\Diff^{\infty}(S^1)$.
Nevertheless, the stabilizer $G_a$ of a subgroup $G\subset\Diff^{2}(S^1)$ with an exceptional minimal set 
cannot contain two germs whose logarithm of their derivatives are rationally independent, according to a classical result of 
Sacksteder (\cite{Sa},   Thm. 2). Sacksteder's result cannot be extended to ${\mathcal C}^1$-diffeomorphisms. 
The proof of our key result is related to Thurston's generalization of Reeb's stability theorem from \cite{Th} and uses 
in an essential way the fact that the Cantor set is sparse while allowing only ${\mathcal C}^1$-smoothness of the diffeomorphisms.

\begin{remark}
Note that mapping class groups $\mathcal M^1(M,C)$ depend essentially on the ambient manifold $M$ (see \cite{AF}). 
On the contrary, if $C\subset {\rm int}(D^n)$ is fixed, then for any embedding of the disk $D^n$ in the interior of some orientable 
$n$-manifold $M^{n}$ the groups $\diff^1_M(C)$ are isomorphic. Moreover, these groups stabilize with respect to the  
standard embeddings $D^n\subset D^{n+k}$, for large enough $k$.  
One could however vary the groups $\diff^1_M(C)$ by allowing 
$C$ to intersect the boundary of $M$ in different patterns (isolated points, Cantor subsets etc).   
\end{remark}

\section*{B. Specific families of Cantor sets}
\subsection{Iterated functions systems}
\begin{definition}\label{contIFS}
A {\em contractive iterated function system} (abbreviated  contractive {\em IFS}) is a finite family 
$\Phi=(\phi_0,\phi_1,\ldots,\phi_n)$ of  contractive maps 
$\phi_j:\R^d\to \R^d$. Recall that a map $\phi$ is {\em contractive} if 
its Lipschitz constant is smaller to unit, namely: 
\[ \sup_{x,y\in \R^d} \frac{d(\phi(x),\phi(y))}{d(x,y)} < 1.\]
\end{definition}

According to Hutchinson (see \cite{Hut}) there exists a unique 
non-empty compact $C=C_{\Phi}\subset \R^d$, called the {\em attractor} of 
the IFS $\Phi$, such that $C=\cup_{j=0}^n \phi_j(C)$. 

\begin{example}\label{centralcantor}
The  central Cantor set $C_{\lambda}$, with $\lambda >2$, is the attractor of the 
IFS $\Phi=(\phi_0,\phi_1)$ on $\R$ given by 
\[ \phi_0(x)=\frac{1}{\lambda}x, \; \phi_1(x)=\frac{1}{\lambda}x +\frac{\lambda-1}{\lambda}.\]
Although the IFS makes sense also when $1< \lambda \leq 2$, in 
this case the attractor is not a Cantor set but the whole interval $[0,1]$. 
\end{example}

Consider now the following type of IFS of topological nature. 
\begin{definition}\label{topIFS} 
Let $U$ be an orientable manifold (possibly non-compact) and 
$\phi_j:U\to U$ be finitely many  orientation preserving homeomorphisms on their image. 
We say that $\Phi=(\phi_0,\phi_1,\ldots,\phi_n)$ has 
a  {\em strict attractive basin} $M$ if $M$ is a compact orientable 
submanifold $M\subset U$ with the following properties:  
\begin{enumerate}
\item $\phi_j(M)\subset {\rm int}(M)$, for all $j\in\{0,1,\ldots,n\}$; 
\item $\phi_i(M)\cap \phi_j(M)=\emptyset$, for any $j\neq i\in\{0,1,\ldots,n\}$. 
\end{enumerate}
We say that the pair $(\Phi, M)$ is an {\em invertible} IFS if $M$ is 
a strict attractive basin for $\Phi$. 
If moreover, $\phi_j$ are $\mathcal C^k$-diffeomorphisms on their image, then we say that the IFS is of class $\mathcal C^k$. 
\end{definition}

The existence of an attractive basin is a topological version of 
uniform contractivity of $\phi_j$. 
There exists then a unique invariant non-empty compact $C_{\Phi}\subset M$ 
with the property that $C_{\Phi}=\cup_{i=0}^n\phi_i(C_{\Phi})$.

\begin{theorem}\label{thompIFS}
Consider a $\mathcal C^1$  contractive  invertible IFS $(\Phi,M)$, $\Phi=(\phi_0,\phi_1,\ldots,\phi_n)$,  
whose strict attractive basin $M$ is diffeomorphic 
to a $d$-dimensional ball.  Then,  
the group $\diff^1_M(C_{\Phi})$ contains the Thompson group 
$F_{n+1}$, when  $M$ is of  dimension $d=1$ and the Thompson group 
$V_{n+1}$, when $d\geq 2$, respectively.  
\end{theorem}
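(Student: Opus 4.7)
The plan is to embed the relevant Thompson group into $\diff^{1,+}_M(C_\Phi)$ by assigning to each standard tree-pair representative an explicit orientation-preserving $\mathcal C^1$-diffeomorphism of $M$ that preserves $C_\Phi$. For a finite word $w=i_1\cdots i_k$ in the alphabet $\{0,\ldots,n\}$, write $\phi_w=\phi_{i_1}\circ\cdots\circ\phi_{i_k}$ and call the sub-ball $M_w=\phi_w(M)$ a \emph{cylinder}. The invertibility hypotheses on $(\Phi,M)$ ensure that the cylinders $M_{w0},\ldots,M_{wn}$ are pairwise disjoint and contained in $\mathrm{int}(M_w)$ for every $w$, while self-similarity of the attractor gives $C_\Phi\cap M_w=\phi_w(C_\Phi)$. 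A \emph{pattern} is a tuple $(w_1,\ldots,w_k)$ of words labelling the leaves of a finite rooted $(n+1)$-ary subtree; equivalently, $C_\Phi=\bigsqcup_j \phi_{w_j}(C_\Phi)$.

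Every element of $V_{n+1}$ is represented by a triple $(\mathcal T,\mathcal T',\sigma)$, where $\mathcal T,\mathcal T'$ are finite rooted $(n+1)$-ary subtrees with the same number $k$ of leaves labelled by patterns $(w_j)$ and $(w'_j)$, and $\sigma\in S_k$, modulo the equivalence generated by simultaneously subdividing a leaf and its $\sigma$-image; for $F_{n+1}$ we restrict to $\sigma=\mathrm{id}$. Associate to such a triple the bijection of $C_\Phi$ whose restriction to $\phi_{w_j}(C_\Phi)$ is $\phi_{w'_{\sigma(j)}}\circ\phi_{w_j}^{-1}$.

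To realize this bijection as the restriction of an element of $\D^{1,+}(M,C_\Phi)$ I would split into two cases. When $d=1$, $M$ is a closed interval and, after relabelling the alphabet so that $\phi_0(M)<\cdots<\phi_n(M)$, the cylinders $M_{w_j}$ and $M_{w'_j}$ appear in the same natural order along $M$; since $\sigma=\mathrm{id}$ we prescribe on each $M_{w_j}$ the orientation-preserving $\mathcal C^1$-diffeomorphism $\phi_{w'_j}\circ\phi_{w_j}^{-1}$. The complement $M\setminus\bigsqcup_j M_{w_j}$ is a finite union of open gap intervals, and on each gap we paste in a strictly increasing $\mathcal C^1$-bijection matching the prescribed values and one-sided derivatives at the endpoints. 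When $d\ge 2$, the families $\{M_{w_j}\}$ and $\{M_{w'_{\sigma(j)}}\}$ are two disjoint configurations of tame closed $d$-balls inside the open $d$-ball $\mathrm{int}(M)$; the isotopy extension theorem for such configurations in a ball yields an orientation-preserving $\mathcal C^1$-diffeomorphism of $M$ whose restriction to each $M_{w_j}$ is the prescribed $\phi_{w'_{\sigma(j)}}\circ\phi_{w_j}^{-1}$.

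Finally, one checks that this assignment descends to a well-defined injective group homomorphism. Simultaneous refinement of a leaf $w_j$ into its children $w_j 0,\ldots,w_j n$ leaves the induced map of $C_\Phi$ unchanged because $\phi_{w'_{\sigma(j)} i}\circ\phi_{w_j i}^{-1}=\phi_{w'_{\sigma(j)}}\circ\phi_{w_j}^{-1}$ on $\phi_{w_j i}(C_\Phi)$; passing to a common refinement of two tree pairs verifies the homomorphism property, and injectivity follows from the density of each $\phi_w(C_\Phi)$ in $M_w\cap C_\Phi$. The main technical obstacle is the $\mathcal C^1$-interpolation in the one-dimensional case: the values and one-sided derivatives prescribed at the endpoints of each gap by the two neighbouring pieces $\phi_{w'_j}\circ\phi_{w_j}^{-1}$ have to be joined by a strictly increasing $\mathcal C^1$ function, which is routine but requires an explicit construction. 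In dimension $d\ge 2$ the corresponding extension step is automatic from the diffeomorphism group of a $d$-ball acting transitively on ordered configurations of disjoint tame sub-balls endowed with orientation-preserving boundary identifications.
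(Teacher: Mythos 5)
Your proposal is correct and follows essentially the same route as the paper: both associate to a tree-pair (with permutation, for $d\geq 2$) the piecewise map $\phi_{w'_{\sigma(j)}}\circ\phi_{w_j}^{-1}$ on the cylinders, extend it over the gaps by a monotone $\mathcal C^1$ interpolation when $d=1$ and over the complementary holed ball when $d\geq 2$, and then verify invariance under leaf subdivision, the homomorphism property, and injectivity via disjointness of the cylinders. The only cosmetic difference is that the paper performs the $d\geq 2$ extension by a conical extension of the boundary-sphere maps followed by smoothing, whereas you invoke the isotopy extension theorem for configurations of disjoint sub-balls; both are valid in the $\mathcal C^1$ category.
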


In particular, the groups  $\diff^1_M(C_{\Phi})$ are (highly) 
nontrivial. 

For a clear introduction to the classical Thompson groups
$F,T,V$ we refer to \cite{CFP}. The generalized versions 
$F_n,T_n,V_n$ were considered by Higman (\cite{Hig} and 
further extended and studied by Brown and Stein (see 
\cite{Stein}), Bieri and Strebel (see \cite{Bieri-Strebel}) and Laget  \cite{Laget}. We will recall their definitions in 
section \ref{introdthomp}.

The result of the theorem does not hold when the attractive basin $M$ is not 
a ball. For instance, when $M$ is a 3-dimensional solid torus, by taking   
nontrivial (linked)  embeddings $\cup_{i=0}^n\phi_i(M)\subset M$ we can 
provide examples of wild Cantor sets, some of them 
being  topologically rigid, in which case 
the group $\diff^1_M(C)$ is trivial (see \cite{Shil,Wright}).

\subsection{Self-similar Cantor subsets of the line}
The second part of this paper is devoted to concrete 
examples of groups arising by these constructions, for particular choices 
of Cantor sets. 
We will be concerned in this section  with 
self-similar Cantor sets, namely  attractors 
of  IFS which consist only of similitudes. The typical example 
is  the central ternary Cantor set $C_{\lambda}\subset [0,1]$ of parameter 
$\lambda >2$ from Example \ref{centralcantor}.

Let $\Phi=(\phi_0,\phi_1, \ldots, \phi_n)$ be an IFS of affine 
transformations of $[0,1]$, given by: 
\[
\phi_j(x)= \lambda_jx+ a_j, 
\]
where 
\[
0=a_0< \lambda_0 < a_1 <\lambda_1+a_1 <a_2 <\dots < \lambda_{n-1}+a_{n-1}
<a_n< \lambda_n+a_n=1.
\] 
The last condition means  that the segments 
$\phi_j([0,1])$ are mutually disjoint, so that 
the attractor $C=C_{\Phi}$ is a sparse Cantor subset of $[0,1]$. 
The positive reals $g_j=a_{j+1}-\lambda_j-a_j$ are the initial {\em gaps} 
as they represent the distance between consecutive 
intervals $\phi_j([0,1])$ and $\phi_{j+1}([0,1])$.  The image of $[0,1]$ by 
the elements of the monoid generated by $\Phi$ are called {\em standard intervals}. 

We consider the groups $F_{C}$  and $T_{C}$ defined as follows.
Let $PL(\R,C)$ and $PL(S^1,C)$ be the groups  
of orientation preserving piecewise linear homeomorphisms of $\R$ and $S^1=\R/\Z$ respectively, 
keeping invariant $C$, i.e.  of those homeomorphisms $\varphi$ for which there exists a finite covering of $C$ by standard disjoint intervals 
$\{I_j\}$, ${\mathbf k}_j\in \Z^{n+1}$ and $a_j,b_j\in C$,  
such that 
 \begin{equation}\label{op-affinemap}
 \varphi(x)=b_j+ \Lambda_{\mathbf k_j}(x-a_j), \qquad {\rm for \quad any }\quad x\in I_j,  
 \end{equation} 
where $\Lambda_{\mathbf k}= \prod_{i=0}^n\lambda_i^{k_i}$, for each multi-index $\mathbf k=(k_0,k_1,\ldots,k_n)\in \Z^{n+1}$.
Eventually  $F_{C}$ and $T_{C}$ are the images of 
$PL(\R,C)$ and $PL(S^1,C)$, respectively, in the group of 
homeomorphisms of $C$.  Similarly we have the group 
of piecewise affine exchanges $PE(C)$ which are (not necessarily orientation preserving) 
left continuous bijections of $S^1$ preserving $C$, i.e. 
of those  (not necessarily continuous)  maps $\varphi$ for which there exists a finite covering of $C$ by standard disjoint intervals 
$\{I_j\}$, ${\mathbf k}_j\in \Z^{n+1}$ and $a_j,b_j\in C$,  
such that 
 \begin{equation}
 \varphi(x)=b_j\pm \Lambda_{\mathbf k_j}(x-a_j), \qquad {\rm for \quad any }\quad x\in I_j,  
 \end{equation}

We denote by $V^{\pm}_{C}$ its image into 
the group of homeomorphisms of $C$.  We denote by $V_C\subset V^{\pm}_C$ the subgroup obtained 
by requiring the restrictions of $\varphi$ to each  standard interval $I_j$  be orientation-preserving, as in  (\ref{op-affinemap}).

\begin{definition}\label{genericity}
The self-similar Cantor set $C\subset [0,1]$ satisfies the 
genericity condition (C) if  
\begin{enumerate}
\item either all homothety ratios $\lambda_i$ are equal and all 
initial generation gaps $g_{\alpha}$ are equal;
\item  or the factors $\lambda_i$ and the gaps $g_{\alpha}$ are incommensurable,  
in the following sense: 
\begin{enumerate}
\item $\Lambda_{\mathbf k}g_{\alpha}=g_{\beta}$ implies that $\mathbf k=0$ and $\alpha=\beta$; 
\item there exists no permutation $\sigma$ different from identity 
and $\mathbf k, \mathbf  k_{\alpha}\in\Z_+^{n+1}$ such that for all $\beta$ we have: 
\[   \frac{g_{\sigma(\beta)}}{g_{\beta}}= \Lambda_{-\mathbf k_{\beta} +\frac{1}{n}\sum_{\alpha=1}^n\mathbf k_{\alpha}}.\]
\end{enumerate}
\end{enumerate}
\end{definition}

\begin{theorem}\label{thompgen}
Let $C\subset [0,1]$ be a self-similar Cantor set satisfying the genericity condition (C). 
Then for every $\varphi\in\diff^{1,+}(C)$ 
we can find a covering of $C$ by a 
finite collection of disjoint standard intervals $\{I_j\}$, whose images are also standard intervals, 
integers ${\mathbf k}_j\in \Z^{n+1}$ and $a_j,b_j\in C$,  
such that the restriction of the map $\varphi$ has the form
 \begin{equation}\label{affine}
 \varphi(x)=b_j+ \Lambda_{{\mathbf k}_j}(x-a_j), \qquad {\rm for \quad any }\quad x\in I_j\cap C.  
 \end{equation} 
In particular, $\diff^{1,+}(C)$ is isomorphic to $F_{C}$, 
$\diff^{1,+}_{S^1}(C)$ is isomorphic to $T_{C}$ and  
$\diff^{1,+}_{S^2}(C)$ is isomorphic to $V^{\pm}_{C}$. Moreover, these are isomorphic to 
the Thompson groups $F_{n+1}, T_{n+1}$ and the signed Thompson group $V^{\pm}_{n+1}$, respectively.   
 \end{theorem}

The main points in the statement of the theorem are the finiteness of the covering and the fact that 
the intervals are standard. 
\begin{remark}
If the self-similar Cantor set does not satisfy the genericity condition $(C)$, then the same proof provides 
for every $\varphi\in\diff^{1,+}(C)$ a covering of $C$ by a 
finite collection of disjoint intervals $\{I_j\}$, integers ${\mathbf k}_j\in \Z^{n+1}$ and $a_j,b_j\in C$,  
such that the restriction of the map $\varphi$ is given by the formula (\ref{nongeneric}), a slight generalization
of (\ref{affine}) above. However, it is not clear that one could assume that the images of $\{I_j\}$ are standard intervals and, in particular, that $\diff^{1,+}(C)$ is isomorphic to  some Thompson group.  
\end{remark}

%If we drop the requirement that the intervals be standard then a similar 
%result holds with the same proof without the genericity condition $(C)$ in the hypothesis. In this case we don't know whether 
% the corresponding groups are still isomorphic to Thompson groups.  

We derive easily now the following interpretation for the Thompson groups
and their braided versions: 

\begin{corollary}\label{thomp2}
\begin{enumerate}
\item Let $C$ be the image of the 
standard ternary Cantor subset into the equatorial circle of the 
sphere $S^2$ and $k\geq 2$. 
\begin{enumerate}
\item The smooth  
mapping class group $\M^{k,+}(D_+^2,C)$ is the Thompson 
group $T$, where $D_+^2$ denotes the upper hemisphere; 
\item The smooth  
mapping class group $\M^{k,+}(S^2,C)$ is the group of half-twists $\mathcal B^{1/2}$ from \cite{FN}. 
\end{enumerate}
\item Let $C$ be the standard ternary Cantor subset of 
an interval contained in the interior of a 2-disk $D^2$ and $k\geq 2$. 
Then $\M^{k,+}(D^2,C)$ is the group of half-twists  of the punctured disk (see \cite{AF}). 
\end{enumerate}
\end{corollary}

\begin{remark}
The group of half twists $\mathcal B^{1/2}$ is an extension of the signed Thompson group $V^{\pm}$ by the compactly supported 
pure mapping class group of $S^2-C$.  It is similar to the braided Thompson group $\mathcal B$ 
from \cite{FK1} (see section \ref{thommcg}), which is an extension of $V$ by the same pure mapping class group, in particular it is 
also finitely presented (see \cite{AF}).   
\end{remark}

\begin{remark}\label{Hausdorff}
The central ternary Cantor sets $C_{\lambda}$ are pairwise non-diffeomorphic, i.e. 
there is no $\mathcal C^1$ diffeomorphism of $\R$ 
sending $C_{\lambda}$ into $C_{\lambda'}$ for $\lambda\neq \lambda'$.
Indeed, if it were such a diffeomorphism then the Hausdorff dimensions
of the two Cantor sets would agree, while the Hausdorff dimension of $C_{\lambda}$ 
is $\frac{\log 2}{\log \lambda}$ (see \cite{Falc}, Thm. 1.14).  Nevertheless, 
the groups $\diff^{1,+}(C_{\lambda})$ are all isomorphic, for $\lambda >2$, according to 
Theorem \ref{thompgen}.    
\end{remark}

We notice that a weaker version of our Theorem \ref{thompgen} concerning the
form of  ${\mathcal C}^1$-diffeomorphisms of the central Cantor sets $C_\lambda$, was already 
obtained in (\cite{BMPV}, Proposition 1).

A case which attracted considerable interest is that of 
bi-Lipschitz homeomorphisms of Cantor sets (see \cite{CP,FM} and the recent \cite{RRY,XX}). 
In particular, the results of Falconer and Marsh \cite{FM} imply that every bi-Lipschitz homeomorphism of a Cantor set is given by a 
pair of possibly infinite coverings of the Cantor set by disjoint intervals 
and affine homeomorphisms between the corresponding intervals.  
Notice that any countable subgroup of $\Diff^0(S^1)$ (or $\Diff^0([0,1])$ can be conjugated 
(by a homeomorphism) into the corresponding group of bi-Lipschitz homeomorphisms (see \cite{DKN}, Thm. D).

\subsection{Self-similar Cantor dusts}
The next step is to go to higher dimensions.
Examples of Blankenship (see \cite{Bla}) show that there exist 
wild Cantor sets in $\R^n$, for every $n\geq 3$. 
A   Cantor set $C$ is tame if and only if 
for every $\varepsilon >0$  there exist finitely many  disjoint piecewise linear cells 
of diameter smaller than $\varepsilon$ whose interiors cover $C$. 
In particular, products of tame Cantor sets are tame. More generally, 
the product of a Cantor subset of $\R^n$ with any compact $0$ dimensional 
subset $Z\subset \R^m$ is a tame Cantor subset of $\R^{m+n}$ (see \cite{McM}, Cor.2). 
   
In order to emphasize the role of the embedding we will consider 
now  the simplest Cantor subsets, which although tame they are not 
smoothly tame.  
Let $C_{\lambda}^n\subset \R^n$ be the Cartesian product of 
$n$ copies of $C_{\lambda}$, where $n\geq 2$ and $\lambda >2$, which is itself a Cantor set.  
\begin{theorem}\label{highthomp}
Let $\varphi\in\diff_{\R^n}^{1,+}(C_{\lambda}^n)$, where  $\lambda >2$.
Then there is a covering of $C_{\lambda}^n$ by a 
finite collection of disjoint standard parallelepipeds $\{I_j\}$, 
integers $k_{j,i}\in \Z$ and $a_{j,i},b_{j,i}\in C_{\lambda}$,  
such that:
 \begin{equation}
 \varphi(x)=(b_{j,i} + \lambda^{k_j}(x_i-a_{j,i}))_{i=1,n}\circ S_{j}, \qquad {\rm for \quad any }\quad x\in I_j\cap C_{\lambda}^n. 
 \end{equation} 
where $S_j$ is an orientation preserving symmetry of the cube.  
In particular, $\diff_{\R^n}^{1,+}(C_{\lambda}^n)$ is isomorphic to 
the $n$-dimensional Brin group  $nV^{sym}$ decorated by the group $D_n$ 
of positive symmetries of the cube (see section \ref{introdthomp}).  
 \end{theorem}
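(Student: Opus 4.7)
The plan is to exploit the exact self-similarity of $C_\lambda^n$ together with the $\mathcal C^1$-regularity of $\varphi$ to force $d\varphi_p$ at every $p\in C_\lambda^n$ to be an integer power of $\lambda$ times an orientation-preserving symmetry of the cube, and then to upgrade this pointwise rigidity to a piecewise affine description on a finite collection of standard parallelepipeds. Throughout write $q=\varphi(p)$ and let $I_k(p)$ denote the standard cylinder of $C_\lambda^n$ of depth $k$ containing $p$, a closed cube of side $\lambda^{-k}$ whose intersection with $C_\lambda^n$ is an affine copy of $C_\lambda^n$.

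The first step is to show that for each $p$ and all large enough $k$ there is an integer $\ell=\ell(k)$ with
\[
\varphi(I_k(p)\cap C_\lambda^n)=J_\ell(q)\cap C_\lambda^n,
\]
where $J_\ell(q)$ is the standard cylinder of depth $\ell$ around $q$. That $\varphi(I_k(p))$ sits inside some $J_\ell(q)$ follows from boundedness of $\|d\varphi\|$ near $p$ together with the gap structure of $C_\lambda^n$: at depth $\ell$ consecutive children are separated by at least $c\lambda^{-\ell}$ for a constant $c=c(\lambda)>0$, and an image of diameter $\le\|d\varphi\|\sqrt{n}\,\lambda^{-k}$ smaller than such a gap is contained in a single cylinder of that depth. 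Applying the same bound to $\varphi^{-1}$ at $q$ forces the inclusion to be an equality at compatibly chosen scales, and the difference $k-\ell(k)$ stabilizes to an integer $k_p$ depending only on $p$.

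Next consider the rescaled bijections
\[
f_k(y)=\lambda^{\ell(k)}\bigl(\varphi(\lambda^{-k}y+p^{(k)})-q^{(\ell(k))}\bigr),\qquad y\in[0,1]^n,
\]
where $p^{(k)}$ and $q^{(\ell(k))}$ are the appropriate corners of $I_k(p)$ and $J_{\ell(k)}(q)$. These are $\mathcal C^1$ bijections from $C_\lambda^n$ to $C_\lambda^n$, and they converge uniformly on $[0,1]^n$ to an affine map $L$ with linear part $\lambda^{-k_p}d\varphi_p$. Uniform convergence combined with bijectivity of each $f_k$ forces $L(C_\lambda^n)=C_\lambda^n$, and any affine bijection of $C_\lambda^n$ must preserve its convex hull $[0,1]^n$; hence the linear part lies in $\{\pm1\}^n\rtimes S_n$, and orientation preservation of $\varphi$ restricts it to the orientation-preserving subgroup, giving $d\varphi_p=\lambda^{k_p}S_p$. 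The pair $(k_p,S_p)$ is continuous in $p$ with values in a discrete set, hence locally constant, and by compactness takes only finitely many values; the level sets partition $C_\lambda^n$ into finitely many disjoint standard parallelepipeds $I_j$. Iterating the cylinder matching on the sub-cylinders of $I_j$ then shows that on $I_j\cap C_\lambda^n$ the map $\varphi$ agrees with the unique affine map with linear part $\lambda^{k_j}S_j$ sending $I_j\cap C_\lambda^n$ to its prescribed image cylinder, giving the claimed formula $\varphi(x)=(b_{j,i}+\lambda^{k_j}(x_i-a_{j,i}))_{i=1,\ldots,n}\circ S_j$. The combinatorial data (finite tree decomposition into standard parallelepipeds, integer scaling, orientation-preserving cube symmetries, translations into $C_\lambda$) is precisely that of $nV^{sym}$, and the reverse inclusion $nV^{sym}\subseteq\diff_{\R^n}^{1,+}(C_\lambda^n)$ is immediate by realizing each piecewise affine bijection as a compactly supported $\mathcal C^1$-diffeomorphism of $\R^n$.

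The main technical obstacle is the passage from ``$d\varphi_p=\lambda^{k_j}S_j$ at every $p\in I_j\cap C_\lambda^n$'' to ``$\varphi$ agrees with a single affine map on $I_j\cap C_\lambda^n$''. Because $C_\lambda^n$ has Lebesgue measure zero, no direct integration argument is available; the conclusion must be extracted from the rigidity of affine self-similar correspondences, by propagating the cylinder matching down through the infinite tree of standard sub-cylinders and at each level using that the $2^n$ children are arranged as the corners of a cube which must map to the corners of a cube in a manner compatible with $S_j$. Making this descent rigorous, and in particular controlling how the integer exponent $k_p$ and the symmetry $S_p$ can jump across neighboring standard cylinders within the same level set, is where the bulk of the technical work lies.
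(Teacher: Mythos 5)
There is a genuine gap, in fact two, and you flag the second one yourself. First, your ``cylinder matching'' claim $\varphi(I_k(p)\cap C_\lambda^n)=J_{\ell}(q)\cap C_\lambda^n$ does not follow from the diameter/gap estimates you give: those only yield a \emph{containment} $\varphi(I_k(p)\cap C_\lambda^n)\subset J_{\ell}(q)\cap C_\lambda^n$ together with a bound $|k-\ell|\le c$ coming from the Lipschitz constants of $\varphi$ and $\varphi^{-1}$. Surjectivity onto the target cylinder is equivalent to $\varphi^{-1}(J_\ell(q)\cap C_\lambda^n)\subset I_k(p)$, which is not forced by ``compatibly chosen scales''; a priori $\varphi$ could map a cylinder onto a proper union of sub-cylinders of $J_\ell(q)$ (exactly as elements of $F$ do at coarse scales). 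Without the equality, your blow-up limit $L$ is only an affine \emph{self-embedding} of $C_\lambda^n$ whose image is not contained in a first-level cylinder, and the step ``any affine bijection of $C_\lambda^n$ preserves $[0,1]^n$, hence the linear part is hyperoctahedral'' is not available; rigidity of affine self-embeddings of $C_\lambda^n$ is itself a nontrivial statement that would need proof. Second, and more seriously, the passage from ``$D_p\varphi=\lambda^{k_j}S_j$ at every $p$ in a clopen piece'' to ``$\varphi$ agrees with a single affine map on that piece'' is the heart of the theorem, and you explicitly defer it (``where the bulk of the technical work lies''). Since $C_\lambda^n$ has measure zero, no integration argument applies, and the locally constant differential does not by itself control the zeroth-order behaviour of $\varphi$ on $C_\lambda^n$.

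The paper closes both gaps by a different and more elementary mechanism. Pointwise rigidity of the differential is obtained not by renormalization but from the unit tangent spread analysis of Theorem \ref{products} (corner points force $D_a\varphi$ to be a signed permutation times a diagonal matrix) combined with the one-dimensional stabilizer computation of Lemma \ref{chi}, using density of endpoints and discreteness of $\langle\lambda\rangle$. The upgrade to local affineness is Lemma \ref{finitenessproduct}: around each endpoint one has an exact affine germ; for a limit of endpoints one takes the \emph{maximal} standard rectangle on which the affine extension agrees with $\varphi$ on $C_\lambda^n$, and derives a contradiction with maximality by noting that the three rectangular gaps adjacent to the far corner must map to gaps whose side lengths lie in the discrete set $\{(\lambda-2)\lambda^{-m}\}$, so that when $D\varphi$ is uniformly close to the identity (after normalizing by the stabilizer) the image gaps are \emph{congruent} to the original ones and the affine map extends across them. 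If you want to salvage your approach, you need to supply precisely this kind of gap-congruence argument; it is what simultaneously proves the cylinder matching and the affine descent, and it cannot be replaced by the soft limiting argument you propose.
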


Notice that in a series of papers (see \cite{brin1,brin3,BL,HM})  by 
Brin, Bleak and Lanoue, Hennig and Matucci the authors proved 
that the higher dimensional Thompson groups $nV$  defined by Brin 
are pairwise non-isomorphic finitely presented simple groups (see also 
\cite{Rubin1,Rubin2}).

\begin{remark}
Note that the group $\diff_{[0,1]^n}^{1}(C_{\lambda}^n)$ is a proper subgroup 
of $\diff_{\R^n}^{1}(C_{\lambda}^n)$.  
\end{remark}

{\bf Acknowledgements.} 
The authors are grateful to B. Deroin, L. Guillou, P. Haissinsky, S.Hurtado,  I. Liousse, V. Sergiescu and M.Triestino for useful discussions 
and to the referees for having thoroughly read this paper, for their corrections and comments. 
The first author was supported by the ANR 2011 BS 01 020 01 ModGroup and 
the second author  by the FWF  grant  P25142.
Part of this work was done during authors' visit at 
the Erwin Schr\"odinger Institute, whose hospitality and 
support are acknowledged.

\section{Definition of Thompson-like groups}\label{introdthomp}
The standard reference for the classical 
Thompson groups is \cite{CFP}. 
For the sake of completeness we provide here the basic 
definitions from several different perspectives, which lead naturally 
the path to the generalizations considered by Brown and Stein and further 
to the high dimensional Brin groups.

\subsection{Groups of piecewise affine homeomorphisms/bijections}

{\it Thompson's group  $F$} 
is the group of piecewise dyadic affine homeomorphisms of the interval $[0,1]$. Namely, for each $f\in F$, there exist two dyadic subdivisions of $[0,1]$, $a_0=0<a_1<\ldots<a_n=1$ and $b_0=0<b_1\ldots<b_n$, with $n\in \N^*$,  i.e. such that $a_{i+1}-a_i$ and $b_{i+1}-b_i$ belong to $\{\frac{n}{2^k},\; n, k\in \N\}$, so that 
 the restriction of $f$ to $[a_i,a_{i+1}]$ is the unique increasing affine map  onto  $[b_i, b_{i+1}]$.

Therefore, an element of $F$ is completely determined by the data of two dyadic subdivisions of  $[0,1]$ having the same cardinality.

Let us identify the circle to the quotient space $[0,1]/0\sim 1$. {\it Thompson's group $T$}  
 is the group of piecewise dyadic affine orientation preserving homeomorphisms   of the circle.
In other words, for each $g\in T$, there exist two  dyadic  subdivisions of $[0,1]$, $a_0=0<a_1<\ldots<a_n=1$ and $b_0=0<b_1\ldots<b_n$, with  $n\in \N^*$, and $i_0\in \{1,\ldots, n\}$, such that, for 
each $i\in \{0,\ldots, n-1\}$, the restriction of $g$ to $[a_i,a_{i+1}]$ is the unique increasing map onto $[b_{i+i_0}, b_{i+i_0+1}]$. The indices must be understood  modulo $n$.

Therefore, an element of $T$ is completely determined by the data of two dyadic subdivisions of $[0,1]$ having the same cardinality, 
say $n\in \N^*$, plus an integer  $i_{0}$ mod $n$.

Finally, {\it Thompson's group  $V$}  is the group of bijections of $[0,1[$, which are right-continuous at each point, piecewise nondecreasing and dyadic affine. In other words, for each $h\in V$, there exist two dyadic subdivisions of $[0,1]$, $a_0=0<a_1<\ldots<a_n=1$ and $b_0=0<b_1\ldots<b_n$, with $n\in \N^*$, 
and a permutation $\sigma\in \mathfrak{S}_n$, such that, for each  $i\in\{1,\ldots,n\}$, the restriction of  $h$ to $[a_{i-1},a_i[$ is the unique  nondecreasing affine map onto $[b_{\sigma(i)-1}, b_{\sigma(i)}[$.
It follows that an element $h$ of  $V$ is completely determined by the data of two dyadic subdivisions of $[0,1]$ having the same cardinality, say $n\in \N^*$, plus a permutation $\sigma\in \mathfrak{S}_n$. Denoting $I_{i}=[a_{i-1},a_i]$ and $J_i= [b_{i-1}, b_i]$, these data can be summarized into a triple $((J_i)_{1\leq i\leq n},(I_i)_{1\leq i\leq n},\sigma\in \mathfrak{S}_n)$.

The {\em signed Thompson group} $V^{\pm}$  is the
group of right-continuous bijections of the unit circle that map the set of
dyadic rationals to itself, are differentiable except at finitely many points,
and such that, on every interval of differentiability, they are affine maps
whose derivatives are (positive or negative) powers of $2$. 

We have obvious inclusions  $F\subset T\subset V\subset V^{\pm}$. R.J.  Thompson proved in 1965 that $F,T$ and $V$ are finitely presented groups and that $T$ and $V$ are simple  (cf. \cite{CFP}). The group $V^{\pm}$ is also finitely presented and simple (see \cite{AF}). The group $F$ is not perfect, as $F/[F,F]$ is isomorphic to $\Z^ 2$, but $F'=[F,F]$  is simple. However, $F'$ is not finitely generated (this is related to the fact that an element $f$ of  $F$ lies in $F'$ if and only if its support is included  in $]0,1[$).

%\begin{figure} 
%\begin{center}
%\includegraphics{bijthompson.eps}
%\caption{Piecewise dyadic affine bijections representing elements of Thompson's groups}
%\end{center}
%\end{figure}

\subsection{Groups of diagrams of finite binary trees}

A {\it finite binary rooted planar tree} \index{tree} is a finite planar tree having a unique 2-valent vertex, called the {\it root}, a set of monovalent vertices called the {\it leaves}, and whose other vertices are 3-valent. The planarity of the tree provides a canonical labelling of its leaves, in the following way. Assuming that the plane is oriented, the leaves are labelled from  1 to $n$, from left to right, the root being at the top and the leaves at the bottom.

There exists a bijection between the set of dyadic subdivisions of $[0,1]$ and the set of  finite binary rooted planar trees. Indeed,  given such a tree, one may label its  vertices by dyadic intervals in the following way. First, the root is labelled by $[0,1]$. Suppose that a vertex is labelled by $I=[\frac{k}{2^n}, \frac{k+1}{2^n}]$, then its two descendant vertices 
are labelled by the two halves  $I$: $[\frac{k}{2^n}, \frac{2k+1}{2^{n+1}}]$ for the left 
one and  $[\frac{2k+1}{2^{n+1}},\frac{k+1}{2^n}]$ for the right one. 
Finally, the dyadic subdivision associated to the tree is 
the sequence of intervals which label its  leaves.

Thus, an element $h$ of $V$ is represented by a triple $(\tau_1,\tau_0,\sigma)$, where $\tau_0$ and $\tau_1$ have the same number of leaves $n\in \N^*$, and  $\sigma\in\mathfrak{S}_n$. Such a triple will be called a {\it symbol} for $h$.
It is convenient to interpret the permutation $\sigma$ as the bijection $\varphi_{\sigma}$ which
maps the $i$-th leaf of the source tree  $\tau_0$ to the $\sigma(i)$-th 
leaf of the target tree $\tau_1$.
When  $h$ belongs to  $F$, the permutation  $\sigma$ is  identity 
and the symbol reduces to a pair of trees $(\tau_{1},\tau_{0})$.

Now,  two symbols are equivalent if they represent the same 
element of $V$ and one denotes by $[\tau_1,\tau_0,\sigma]$ the equivalence class. 
The composition law of piecewise dyadic 
affine bijections is pushed out on the set of equivalence classes of symbols in the following way. In order to define 
 $[\tau_1',\tau_0',\sigma']\cdot[\tau_1,\tau_0,\sigma]$, one may suppose, at the price of refining both symbols, that
  the tree   $\tau_1$ coincides with the tree $\tau_0'$. Then the product of the two symbols is 
\[ [\tau_1',\tau_1,\sigma']\cdot[\tau_1,\tau_0,\sigma]= [\tau_1',\tau_0,\sigma'\circ\sigma]. \]
    It follows that $V$ is isomorphic to the group of  equivalence classes of symbols endowed with this internal law.
Now, every element of $V^{\pm}$ is encoded 
by an {\em enhanced symbol} $(T,T',\sigma,\varepsilon)$, where $T, T'$ are admissible trees, $\sigma:\partial T\to \partial T'$ is a bijection and $\varepsilon \in (\Z/2\Z)^{\partial T'}$, up to equivalence.

 \subsection{Partial automorphisms of trees} 

The beginning of the article \cite{GreS} formalizes a change of point of view, consisting in considering, not the finite binary trees, 
but their complements in the infinite binary tree.

Let ${\cal T}_2$ be the infinite binary rooted planar tree (all its vertices other than the root are 
3-valent). Each finite binary rooted planar tree $\tau$ can be embedded in a unique way into ${\cal T}_2$, assuming that the embedding maps the root of $\tau$ onto the root of ${\cal T}_2$, and respects the orientation.  Therefore, $\tau$ may be identified with 
a subtree  of ${\cal T}_2$, whose root coincides with that of  ${\cal T}_2$. 

\begin{definition}[ cf. \cite{ka-se}]
A {\em partial isomorphism} \index{partial isomorphism} of ${\cal T}_2$ consists of the data of two finite binary rooted subtrees $\tau_0$ and $\tau_1$ of ${\cal T}_{2}$ having the same number of 
leaves $n\in\N^*$, and an isomorphism $q: {\cal T}_2\setminus \tau_0\rightarrow {\cal T}_2\setminus \tau_1$. The complements 
of  $\tau_0$ and $\tau_1$ have $n$ components, each one isomorphic to  ${\cal T}_2$, 
which are enumerated from 1 to $n$ according to the labeling of the leaves 
of the trees  $\tau_0$ and $\tau_1$. Thus, 
$ {\cal T}_2\setminus \tau_0=T^1_0\cup\ldots\cup T^n_0$ and $ {\cal T}_2\setminus \tau_1=T^1_1\cup\ldots\cup T^n_1$ where the $T^i_j$'s are the 
connected components. 
Equivalently, the partial isomorphism of ${\cal T}_2$
is given by a permutation $\sigma\in \mathfrak{S}_n$ and,  for $i=1,\ldots,n$, an  isomorphism
 $q_i: T^i_0\rightarrow T^{\sigma(i)}_1$.

Two partial automorphisms $q$ and $r$ can be composed if and only if the target of $r$ coincides with the source of $r$. One gets the partial automorphism $q\circ r$.
The composition provides a structure of inverse monoid on the  set of partial automorphisms. 

%\begin{center}
%\includegraphics[scale=0.9]{autpart.eps}
%\caption{A partial automorphism of ${\cal T}_2$}
%\end{center}

\end{definition}

 Let $\partial {\cal T}_2$ be the boundary of  ${\cal T}_2$ (also called the set of  ``ends'' of ${\cal T}_2$) 
endowed with its usual topology, for which it is a Cantor set.
Although a partial automorphism does not act (globally) on the tree, it does  act on its boundary.
 One has therefore a morphism from the monoid of partial 
 isomorphism into the homeomorphisms of  $\partial{\cal T}_2$, 
 whose image  $N$ is the {\it spheromorphisms group of  Neretin} (see \cite{Ner}). 

Thompson's group  $V$ can be viewed as the subgroup of  $N$ which is the image of those
partial automorphisms which respect the local orientation of the edges.

\subsection{Generalizations following Brown and Stein, Bieri and Strebel}
Brown considered in \cite{Br} similar groups $F_{n,r}\subset T_{n,r}\subset V_{n,r}$, extending previous work of Higman,  
which were defined as in the last two constructions above but using instead of binary trees forests of $r$ copies of $n$-ary trees so that $F, T, V$ correspond to $n=2$ and $r=1$. 
The isomorphism type of $V_{n,r}$ and $T_{n,r}$ only depends on $r$ (mod $n$) while 
$F_{n,r}$ depends only on $n$.  We drop the subscript $r$ when $r=1$. 
These groups are finitely presented and of type $FP_{\infty}$  according  to \cite{Br-Geo} for the case of 
$F$ and $T$ and then  (\cite{Br}, thm. 4.17)  for its extension to all other groups from this family.
Moreover, Higman have proved (see \cite{Hig}) that 
$V_{n,r}$ has a simple subgroup of index ${\rm g.c.d}(2,n-1)$, and this was extended by Brown who showed 
that $F_{n}$ have simple commutator and $T_{n,r}$  have simple double commutator groups (see \cite{Br} for more details and refinements).

 One can obtain these groups also by considering $n$-adic piecewise affine 
homeomorphisms (or bijections) of $[0,r]$ (with identified endpoints for $T_{n,r}$) i.e.  having 
singularities in $\Z\left[\frac{1}{n}\right]$ and derivatives in $\{n^a, a\in \Z\}$. 
This point of view was taken further  by Bieri, Strebel and Stein in \cite{Bieri-Strebel,Stein}.  Specifically, given a multiplicative subgroup $P\subset \R$, a $\Z[P]$-submodule $A\subset \R$ satisfying $P\cdot A = A$, and a positive $r\in A$, one can consider 
 the group $F_{A,P,r}$ of those  ${\rm PL}$ homeomorphisms of $[0,r]$ with finite singular set in $A$ and all slopes in $P$. 
   There are similar families $T_{A,P,r}$ and $V_{A,P,r}$. 
   Brown and Stein proved that $F_{ \Z\left[ \frac{1}{n_1n_2\cdots n_k}\right], \langle n_1,n_2, \ldots, n_k\rangle, r}$ 
   is finitely presented of $FP_{\infty}$ type.  Furthermore  $F_{A,P,r}$ and $V_{A,P,r}$ have simple commutator subgroups,
    while $T_{A,P,r}$ have  simple second commutator subgroup.  
 
The signed version $V_{n,r}^{\pm}$ of  $V_{n,r}$ is defined as above, by allowing both orientation preserving and orientation reversing piecewise affine homeomorphisms.

\subsection{Mapping class groups of infinite surfaces and braided Thompson groups}\label{thommcg}

Let  $\mathscr{S}_{0,\infty}$ be the oriented surface of genus zero, 
which is the following  inductive limit of compact oriented genus zero surfaces with boundary 
$\mathscr{S}_{n}$. Starting with a cylinder $\mathscr{S}_{1}$, one gets $\mathscr{S}_{n+1}$ from 
 $\mathscr{S}_{n}$ by gluing a pair of pants  (i.e. a three-holed sphere) along 
 each boundary circle of $\mathscr{S}_{n}$. This construction yields, for each $n\geq 1$, an embedding 
  $\mathscr{S}_{n}\hookrightarrow \mathscr{S}_{n+1}$, 
  with an orientation on $\mathscr{S}_{n+1}$ compatible with that of  $\mathscr{S}_{n}$. 
  The resulting inductive limit (in the topological category) of the $\mathscr{S}_{n}$'s is the surface
   $\mathscr{S}_{0,\infty}={\displaystyle \lim_{\stackrel{\rightarrow}{n}} \mathscr{S}_{n}}$. 
  
By the above construction, the surface $\mathscr{S}_{0,\infty}$ is the union of a cylinder and of countably many 
pairs of pants.  This topological decomposition of  $\mathscr{S}_{0,\infty}$
will be called the {\em canonical pair of pants decomposition}.

The set of isotopy classes of  orientation-preserving  homeomorphisms  of $\mathscr{S}_{0,\infty}$ 
is an {\em uncountable} group. 
By restricting to a certain type of homeomorphisms (called asymptotically rigid),
we shall obtain countable subgroups (see \cite{FK1,FK2}).

Any connected and compact subsurface of $\mathscr{S}_{0,\infty}$ 
which is the union of the cylinder and finitely many pairs of pants of the canonical decomposition will be called an {\it  admissible subsurface}
\index{admissible subsurface} of $\mathscr{S}_{0,\infty}$.
The {\it type} of such a subsurface $S$ is the number of connected components in its boundary.

A {\em rigid structure} on $\mathscr{S}_{0,\infty}$ is  given by a pants decomposition together with 
a set of disjoint proper arcs joining distinct ends such that each pair of pants intersects essentially only three arcs 
which join different boundary circles. One component of complement of the union of arcs is called the {\em visible side}.  
We fix a rigid structure on the surface underlying the canonical pants decomposition.

\begin{definition}[following  \cite{ka-se,FK1}]\label{asy}
A homeomorphism $\varphi$ of $\mathscr{S}_{0,\infty}$ is {\em asymptotically rigid}  if there exist two 
admissible  subsurfaces $S_0$ and $S_1$ having the same type, such that 
$\varphi(S_{0})=S_{1}$ and whose restriction 
$\mathscr{S}_{0,\infty}\setminus{S_0}\rightarrow \mathscr{S}_{0,\infty}\setminus{S_1}$ is rigid, 
meaning that it maps each pants (of the canonical pants decomposition) onto a pants and the visible side onto the visible side. 
If we drop the last requirement we say that the homeomorphism $\varphi$ is {\em asymptotically
quasi-rigid}. 
  
\vspace{0.1cm}\noindent 
The {\em asymptotically rigid}  and {\em quasi-rigid mapping class groups} 
\index{asymptotically rigid mapping class group} of $\mathscr{S}_{0,\infty}$ 
are the groups of isotopy classes of asymptotically rigid  and quasi-rigid homeomorphisms, respectively. 
\end{definition}

The {\em asymptotically rigid mapping class group}  $\mathcal B$ and the {\em quasi-rigid mapping class group} 
$\mathcal B^{1/2}$ of $\mathscr{S}_{0,\infty}$ are  finitely presented groups (see \cite{FK1,AF}) which fit into the exact sequences:
\[ 1 \to P\mathcal M (\mathscr{S}_{0,\infty}) \to \mathcal B \to V\to 1,\]
\[ 1 \to P\mathcal M (\mathscr{S}_{0,\infty}) \to \mathcal B^{1/2} \to V^{\pm}\to 1.\]
Some very similar versions of the same group (using a Cantor disk instead of a Cantor sphere or  a more combinatorial framework) were obtained independently by Brin (\cite{brin2}) and Dehornoy (\cite{dehornoy2}). 
We will call any version of them as {\em braided Thompson groups}.

\subsection{Brin's groups $nV$ and their decorated versions}

A rather different direction was taken in the seminal paper \cite{brin1} of Brin, where the author constructed 
a family of countable  groups $nV$ acting as homeomorphisms of the product of $n$-copies of the standard triadic 
Cantor, generalizing the group $V$ which occurs for $n=1$. 

Let  $I^n\subset \R^n$ denote the unit cube. A {\em numbered pattern} is a finite dyadic partition 
of $I^n$ into parallelepipeds along with a numbering. A dyadic partition is obtained from the cube 
by dividing at each step of the process one parallelepiped into two equal halves  by a  cutting hyperplane 
parallel to one of the coordinates hyperplane. 

One definition of $nV$ is as the group of piecewise affine (not continuous!) transformations associated to pairs of 
numbered patterns. Given the numbered patterns $P=(L_1,L_2,\ldots, L_n)$ and 
$Q=(R_1,R_2,\ldots,R_n)$, we set $\varphi_{P,Q}$ for the unique piecewise affine 
transformation of the cube sending  affinely each  $L_i$ into $R_i$ and preserving the coordinates hyperplanes. 
Thus $nV$ is the group of piecewise affine transformations of the form $\varphi_{P,Q}$, with $P,Q$ running over the set of all possible dyadic partitions.

Another description is as a group of homeomorphisms of the product $C^n$ of the standard triadic Cantor set $C$. 
Parallelepipeds in a dyadic partition correspond to closed and open (clopen) subsets of $C^n$. 
Every dyadic cutting hyperplane $H$ subdividing some parallelepiped $R$ into two halves determines a 
parallel shadow  (open) parallelepiped in $R$  whose width is one third of the width of $R$ in the direction orthogonal 
to $H$. 
 Notice then that the complement of the union of all shadow parallelepipeds is $C^n$.
Every pattern $P=(R_1,R_2,\ldots,R_n)$ determines a numbered collection of parallelepipeds 
$X_P=(X(R_1),X(R_2),\ldots, X(R_n))$ whose 
complementary   is the set of shadows parallelepipeds of those cutting hyperplanes used to built $P$.  
Then $A(R_i)=X(R_i)\cap C^n$ form a clopen partition of $C^n$. 
For a pair of patterns $P,Q$ we define the homeomorphism 
$h_{P,Q}$ of $C^n$ as the unique homeomorphism which sends affinely $A(L_i)$ into $A(R_i)$ 
and preserves the orientation in each coordinate. This amounts to say that 
$h_{P,Q}$ is the restriction to $C^n$ of the piecewise affine transformation 
sending affinely $X(L_i)$ into $X(R_i)$ and preserving the coordinates hyperplanes.

The groups $nV$ are simple (see e.g. \cite{brin1,brin3}) and finitely presented (see \cite{HM}). 
The stabilizer at some $a\in C^n$ of the (germs of) homeomorphisms in $nV$ 
is isomorphic to $\Z^{r(a)}$, where $r(a)$ is the number of rational coordinates of $a$. 
This implies that the groups $nV$ are pairwise non-isomorphic (see \cite{BL} for details).

We could of course extend this construction to arbitrary  products 
of central Cantor sets $C_{\lambda}$ in the spirit of Brown and Stein, Bieri and Strebel as above. 

As in the case of groups $V_{n,r}$ there exists a decorated version $nV^{sym}$ of $nV$ by allowing piecewise affine 
transformations $h_{P,Q}$  to be arbitrary affine isomorphisms between  $A(L_i)$ into $A(R_i)$, not necessarily 
preserving each coordinate hyperplane. We will say that  $nV^{sym}$ is the {\em $n$-dimensional Brin group decorated by 
$D_n$}, where $D_n$ denotes the group of orientation preserving symmetries  of the cube. Its elements correspond to numbered patterns $P=(L_1,L_2,\ldots, L_n)$ and $Q=(R_1,R_2,\ldots,R_n)$, along with a $n$-tuple $\Sigma=(\sigma_1,\sigma_2,\ldots,\sigma_n)$ of orientation preserving symmetries of the $n$-cube. The map $\varphi_{P,Q,\Sigma}$ consists of  the unique piecewise affine transformation sending $\sigma_i(L_i)$ into $R_i$.  Recall that $D_n$ is the group 
of orthogonal $n\times n$ matrices with integer entries and unit determinant. One defines in the same way the 
{\em $n$-dimensional Brin group  $nV^{\pm sym}$ decorated by $O_n$}, where $O_n$ denotes the hyperoctahedral group of all 
symmetries  of the cube, which  is the higher dimensional generalization of $V^{\pm}$.

\section{Proof of general countability statements}
\subsection{Proof of Theorem \ref{compact}}
We parameterize  the interval $E$ containing the Cantor set $C$ by the 
$\mathcal C^k$-curve $\gamma:[0,1]\to M$ and denote by 
$A\subset [0,1]$ the preimage of $C$, which is still a Cantor set. 
We may assume that $\{0,1\}\subset A$.  For the sake of simplicity we suppose that  the interval $E$ lies in the interior of $M$. The proof works  in general, with only minor modifications.  
Let $\varphi\in \D^{k,+}(M,C)$ and denote by 
$\xi(t)=\varphi\circ\gamma(t)$. 
Consider a $\mathcal C^k$-coordinates chart $U\subset M$ containing $E$, such that $U$ is 
identified with an open disk, while  $\gamma$ is  now linear and parameterized by arc length, namely that 
$\parallel \dot{\gamma}\parallel=1$ and  $\ddot{\gamma}=0$.  The norm $\parallel \; \parallel$ is 
associated to the standard scalar product $\langle , \rangle$ on $U$ induced from $\R^n$. 

The strategy of the proof is as follows. 
We define a subset $I_{\varepsilon}\subset [0,1]$ consisting of finitely many intervals which contains $A$.  
At first one straightens out $\xi$ in the complementary of  $I_{\varepsilon}$. To this 
purpose we modify $\varphi$  by composing with a convenient compactly supported diffeomorphism. 
Further we show that there is an isotopy rel $A$ which straightens out the remaining arcs of $\xi$.   
Eventually, one proves that a diffeomorphism preserving the orientation of the surface which fixes the arc $E$  
is, up to isotopy, supported outside a disk neighborhood of $E$. This will show that $\varphi$ has a 
compactly supported class. 

Assume for the moment that $A$ is just an infinite set without isolated points.
The set of those $t$ for which $\gamma(t)=\xi(t)$ is a closed 
subset of $[0,1]$ containing $A$ and hence its closure 
$ \overline{A}$. Let now $t_0\in \overline{A}$. 
Then, since $\gamma$ and $\xi$ are differentiable at $t_0$ we have: 
\begin{equation} \dot{\gamma}(t_0)=\lim_{t\in A, t\to t_0}\frac{\gamma(t)-\gamma(t_0)}{t-t_0}=
\lim_{t\in A, t\to t_0}\frac{\xi(t)-\xi(t_0)}{t-t_0} =\dot{\xi}(t_0). 
\end{equation}
If $\varphi$ is twice differentiable then the same argument shows that:
\begin{equation} \ddot{\gamma}(t_0)=\ddot{\xi}(t_0).\end{equation}

Since $\varphi$ is of class $\mathcal C^2$, for every $\varepsilon >0$, 
there exists $\delta(\varepsilon) >0$ such that whenever  $s_1,s_2\in A$, with 
$|s_1-s_2|< \delta(\varepsilon)$ we have: 
\begin{equation}\label{smalldot}  1-\varepsilon <  \langle \dot{\gamma}(t),\dot{\xi}(t)\rangle \leq 1, 
{\rm for \; all\; } t\in[s_1,s_2],\end{equation}

\begin{equation}\label{smallddot}  |\ddot{\xi}(t)| <\varepsilon, 
{\rm for \; all\; } t\in[s_1,s_2].\end{equation}

We assume now that $A=\cap_{j=1}^{\infty}A_j$ 
is the infinite nested intersection of the closed finite unions of intervals 
$A_j\supset A_{j+1}\supset \cdots$.

We  denote $I_{\varepsilon}=
\cup_{s_1,s_2\in A; |s_1-s_2| \leq\frac{\delta(\varepsilon)}{2}}[s_1,s_2]\subset [0,1]$. 
We choose $\varepsilon >0$ small enough such that the image 
of $\xi|_{I_{\varepsilon}}$ is contained within the coordinates disk $U$. 

Set further $\gamma_s(t)= (1-s) \gamma(t)+ s\; \xi(t)$, for $t\in[s_1,s_2]\subset I_{\varepsilon}$ and 
$s\in[0,1]$.

\begin{lemma}\label{local}
Fix $\varepsilon <1$ as above. Let $s_1,s_2\in A$, such that $|s_1-s_2| \leq \delta(\varepsilon)/2$. 
Then $\gamma_s|_{[s_1,s_2]}$ provides a $\mathcal C^k$-isotopy between the 
restrictions $\gamma|_{[s_1,s_2]}$ and $\xi|_{[s_1,s_2]}$ to the interval $[s_1,s_2]$. 
In particular,  the image $\xi(I_{\varepsilon})$ is contained within the union of 
orthogonal strips   $(\gamma(I_{\varepsilon})\times \R)\cap U$.  
\end{lemma}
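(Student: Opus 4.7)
The idea is to exploit the fact that in the chart $U$ the curve $\gamma$ is straight and arc-length parameterized, so $\dot{\gamma}\equiv e$ is a constant unit vector, and the inequality (\ref{smalldot}) says that $\dot{\xi}$ stays very close to the same direction on $[s_1,s_2]$. Differentiating the straight-line homotopy gives
\[
\dot{\gamma}_s(t) = (1-s)\dot{\gamma}(t) + s\dot{\xi}(t),
\]
so taking the inner product with $e$ and using (\ref{smalldot}) yields, for every $t\in[s_1,s_2]$ and every $s\in[0,1]$,
\[
\langle e,\dot{\gamma}_s(t)\rangle = (1-s) + s\langle \dot{\gamma}(t),\dot{\xi}(t)\rangle > (1-s) + s(1-\epsilon) = 1-s\epsilon > 0.
\]
This single inequality is the engine driving both assertions.

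Next I would deduce the isotopy statement. The above implies that $\pi\circ\gamma_s$, where $\pi:U\to\R e$ is the orthogonal projection along $e^\perp$, has strictly positive derivative on $[s_1,s_2]$, hence is strictly increasing and in particular injective; since also $\dot{\gamma}_s(t)\neq 0$, each $\gamma_s$ is a $\mathcal{C}^k$-embedding of $[s_1,s_2]$ (and its image stays inside $U$ by convexity of the disk chart, after shrinking $\epsilon$ if necessary so that both $\gamma([s_1,s_2])$ and $\xi([s_1,s_2])$ lie in $U$). Joint $\mathcal{C}^k$-regularity of $(s,t)\mapsto\gamma_s(t)$ is immediate because the homotopy is affine in $s$ and $\mathcal{C}^k$ in $t$. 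The endpoints are preserved, since $s_1,s_2\in A$ and $\varphi|_C=\mathrm{id}$ force $\xi(s_i)=\varphi(\gamma(s_i))=\gamma(s_i)$, whence $\gamma_s(s_i)=\gamma(s_i)$ for all $s$. Thus $\gamma_s$ is a $\mathcal{C}^k$-isotopy rel endpoints between $\gamma|_{[s_1,s_2]}$ and $\xi|_{[s_1,s_2]}$.

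For the strip containment I would apply the same projection argument directly to $\xi$. Since $(\pi\circ\xi)'(t)=\langle e,\dot{\xi}(t)\rangle>1-\epsilon>0$, the map $\pi\circ\xi$ is strictly increasing on $[s_1,s_2]$, and it agrees with $\pi\circ\gamma$ at the two endpoints (by the same identity $\xi(s_i)=\gamma(s_i)$). Therefore
\[
\pi\circ\xi([s_1,s_2])\subset [\pi\gamma(s_1),\pi\gamma(s_2)] = \pi(\gamma([s_1,s_2])),
\]
which means that $\xi([s_1,s_2])$ lies in the strip $\pi^{-1}\bigl(\pi(\gamma([s_1,s_2]))\bigr)\cap U = (\gamma([s_1,s_2])\times\R)\cap U$ perpendicular to $\gamma$. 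Taking the union over all the elementary intervals $[s_1,s_2]$ which compose $I_\epsilon$ gives exactly $\xi(I_\epsilon)\subset (\gamma(I_\epsilon)\times\R)\cap U$.

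The proof as such is not hard; the main conceptual point is that one must not use (\ref{smallddot}) here, only the first-order inequality (\ref{smalldot}). The $\mathcal{C}^2$-hypothesis is what guaranteed, via $\dot{\xi}(t_0)=\dot{\gamma}(t_0)$ at points of $\overline{A}$ and uniform continuity of $\dot{\xi}$ on the compact set $\xi([s_1,s_2])$, that (\ref{smalldot}) holds throughout each short interval; once that uniform one-sided bound on $\langle \dot{\gamma},\dot{\xi}\rangle$ is available, the projection argument handles both injectivity of $\gamma_s$ and the non-backtracking of $\xi$ simultaneously.
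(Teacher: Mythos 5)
Your proof is correct and follows essentially the same route as the paper: both hinge on the single inequality $\langle \dot{\gamma}_s(t),\dot{\gamma}(t)\rangle \geq 1-s+s\langle\dot{\xi}(t),\dot{\gamma}(t)\rangle\geq 1-s\varepsilon>0$ coming from (\ref{smalldot}), which makes the projection onto the direction of $\gamma$ strictly increasing and hence each $\gamma_s$ an embedding. Your explicit projection-plus-endpoint argument for the strip containment is in fact a slightly more careful justification than the paper's one-line remark about orthogonal projections, but it is the same underlying idea, and your observation that (\ref{smallddot}) is not needed here matches the paper, which reserves the second-order estimate for Lemma \ref{global}.
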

\begin{proof}
We have to prove that for  any $s\in[0,1]$ the curve $\gamma_s|_{[s_1,s_2]}$ is simple.
This follow immediately from  the fact that whenever $\varepsilon <1$ we have: 
\begin{equation}
\langle \dot{\gamma}_s(t), \dot{\gamma}(t) \rangle \geq 
1-s+s\langle \dot{\xi}(t), \dot{\gamma}(t) \rangle\geq 1-\varepsilon s >0
\end{equation}  
for any $t\in [0,1]$, $s\in [0,1]$. 
Further, note that the curve $\gamma_s(t)$, for $s\in [0,1]$ and fixed $t\in I_{\varepsilon}$ is a segment 
joining $\xi(t)$  with its orthogonal projection  
onto $\gamma(I_{\varepsilon})$.  
\end{proof}

We set 
\begin{equation}
\eta(t)=\left\{\begin{array}{cc}
\xi(t), &  {\rm if }\; t\in I_{\varepsilon};\\
\gamma(t), &  {\rm if }\; t\not\in I_{\varepsilon}.\\
\end{array}\right.
\end{equation}

\begin{lemma}
There exists a compactly supported diffeomorphism $\psi\in P\D^{k,+}(M,C)$ such that 
$\psi(\xi)$ and $\eta$ are isotopic rel $A$. 
\end{lemma}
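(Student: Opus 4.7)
The complement $[0,1]\setminus I_\varepsilon$ consists of finitely many open intervals $(u_i, v_i)$, $i=1,\ldots,N$, each being a complementary interval of $A$ of length greater than $\delta(\varepsilon)/2$, with endpoints $u_i,v_i\in A\cup\{0,1\}$. On each such ``large gap'' the restriction $\xi|_{[u_i,v_i]}$ is a $\mathcal C^k$-embedded arc in $M$ meeting $C$ only at the endpoints $\gamma(u_i)$ and $\gamma(v_i)$, because $\varphi(C)=C$ and $(u_i,v_i)\cap A=\emptyset$; the same holds for $\gamma|_{[u_i,v_i]}$. Moreover, the two arcs share endpoints and tangent directions there, since $\dot\xi(t)=\dot\gamma(t)$ for $t\in A$.

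The plan is to treat each gap independently. For each $i$, since $C$ is closed and the compact set $\xi([u_i,v_i])\cup\gamma([u_i,v_i])$ meets $C$ only at $\{\gamma(u_i),\gamma(v_i)\}$, there is an open neighborhood $W_i$ of it whose closure is disjoint from $C\setminus\{\gamma(u_i),\gamma(v_i)\}$; after shrinking, the $\overline{W_i}$ can be taken pairwise disjoint. In $W_i$ I would construct a $\mathcal C^k$-diffeomorphism $\psi_i$ of $M$ with support in $W_i$, identity on a neighborhood of $\gamma(u_i)$ and $\gamma(v_i)$, and sending the embedded arc $\xi|_{[u_i,v_i]}$ onto $\gamma|_{[u_i,v_i]}$. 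Such a $\psi_i$ is obtained from the isotopy extension theorem, applied to a smooth isotopy of embedded arcs rel endpoints from $\xi|_{[u_i,v_i]}$ to $\gamma|_{[u_i,v_i]}$ inside $W_i$. The existence of this isotopy is standard when $\dim M\ge 3$, where two smooth arcs with the same endpoints and matching tangents in a ball are ambient isotopic; in the surface case $\dim M=2$ one takes $W_i$ to be a small regular neighborhood which, together with the planar structure of the coordinate chart $U$ near $\gamma(u_i),\gamma(v_i)$ and the $\mathcal C^1$-matching at the endpoints, is a disk in which the two arcs cobound a subdisk.

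Setting $\psi:=\psi_1\circ\cdots\circ\psi_N$, the fact that the $\overline{W_i}$ are pairwise disjoint makes the composition well-defined, orientation-preserving, and of class $\mathcal C^k$. Since every $\psi_i$ is identity near $\gamma(u_i),\gamma(v_i)$ and outside $W_i$, the map $\psi$ is identity on $C$ and compactly supported in $\bigcup_i \overline{W_i}$, hence $\psi\in P\D^{k,+}(M,C)$ is compactly supported. By construction $\psi\circ\xi=\gamma=\eta$ on each $[u_i,v_i]$, and $\psi\circ\xi=\xi=\eta$ on $I_\varepsilon$ outside the $W_i$; inside each $W_i\cap\xi(I_\varepsilon)$ the local diffeomorphism $\psi_i$ is identity near the gap endpoints, giving a short ambient isotopy connecting $\psi\circ\xi$ to $\eta$ rel $A$. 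Concatenating these pieces produces the desired isotopy $\psi\circ\xi\simeq\eta$ rel $A$.

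The main obstacle is the construction of $\psi_i$ when $\dim M=2$, where the two arcs $\xi|_{[u_i,v_i]}$ and $\gamma|_{[u_i,v_i]}$ may fail a priori to cobound an embedded disk in $M$. The resolution uses that $W_i$ can be chosen as an arbitrarily thin tubular neighborhood of the union of the arcs, and that the $\mathcal C^2$-tangential matching of $\xi$ and $\gamma$ at $u_i,v_i$ forces the two arcs to leave each endpoint tangent to $\dot\gamma$, so that the thin tube is a topological disk whose boundary is essentially the concatenation $\gamma|_{[u_i,v_i]}\cup\xi|_{[u_i,v_i]}$.
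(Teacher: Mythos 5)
Your overall strategy --- isolate the finitely many gap arcs $\xi|_{[u_i,v_i]}$ and push them onto $\gamma|_{[u_i,v_i]}$ by an ambient diffeomorphism supported away from $C$ --- is the same as the paper's, but two of your steps cannot be carried out as written. First, since $C$ has no isolated points, \emph{every} neighborhood of $\gamma(u_i)$ contains infinitely many points of $C$, so an open $W_i\supset \xi([u_i,v_i])\cup\gamma([u_i,v_i])$ with $\overline{W_i}\cap C=\{\gamma(u_i),\gamma(v_i)\}$ does not exist. Second, and more seriously, the two properties you impose on $\psi_i$ are incompatible: if $\psi_i$ is the identity on a neighborhood of $\gamma(u_i)$, then $\psi_i$ fixes $\xi([u_i,u_i+\epsilon])$ pointwise, and this set lies in $\gamma([u_i,v_i])$ only if $\xi$ already coincides with $\gamma$ as a set near $u_i$; the hypotheses give only tangency there, $\dot\xi(u_i)=\dot\gamma(u_i)$, not coincidence on an interval. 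If instead you relax to an isotopy rel the endpoints alone, the resulting $\psi_i$ will in general displace the points of $C$ accumulating at $\gamma(u_i)$, so $\psi\notin P\D^{k,+}(M,C)$. The paper resolves exactly this with a preliminary step you are missing: it first perturbs the curve $\xi$ rel $A$, by an isotopy supported in parameter inside the gaps near their ends, so that $\xi\equiv\gamma$ on $N(I_{\varepsilon})\cap\bigcup_i J_i$; only the remaining compact middle portions $\xi(J_i\setminus N(I_{\varepsilon}))$, which then genuinely share endpoints with $\gamma(J_i\setminus N(I_{\varepsilon}))$ and lie at positive distance from $C$, are moved by a single diffeomorphism supported in the complement of a neighborhood $V$ of $\xi(I_{\varepsilon})\supset C$.

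Two further points. Your $W_i$ contain $\gamma((u_i,v_i))$, and you never rule out that $\xi(I_{\varepsilon})$ meets this set away from the gap endpoints; since $\eta=\xi$ on $I_{\varepsilon}$, a $\psi_i$ that displaces part of $\xi(I_{\varepsilon})$ destroys the conclusion. This is precisely what Lemma \ref{local} supplies --- the containment of $\xi(I_{\varepsilon})$ in the orthogonal strips over $\gamma(I_{\varepsilon})$ forces $\xi(I_{\varepsilon})$ to be disjoint from $\gamma([0,1]\setminus I_{\varepsilon})$ --- and your argument needs to invoke it. Finally, your fix for the surface case is not correct as stated: a thin regular neighborhood of the union of two embedded arcs meeting only at their common endpoints is an annulus, not a disk (and if the arcs cross in their interiors it is not even that); the $\mathcal C^1$-matching at the endpoints does not remedy this.
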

\begin{proof}
Lemma \ref{local} shows that the image of $\eta$ is a simple curve, as $\xi(I_{\varepsilon})$ is contained within the union of orthogonal strips   $(\gamma(I_{\varepsilon})\times \R)\cap U$, and thus it cannot intersect $\gamma([0,1]\setminus I_{\varepsilon})$. 

Note  that $A\subset I_{\varepsilon}$, since  $A$ has no isolated points. 
The endpoints of a maximal complementary interval should belong to $A$, by maximality. 
In particular, its length should be greater than $\frac{\delta(\varepsilon)}{2}$, and hence there are only finitely many 
maximal complementary intervals say $J_1,J_2,\ldots, J_p$.   
Then $\xi(J_i)$ are pairwise disjoint smooth arcs  whose  interiors  are $\xi({\rm int} (J_i))\subset M-C$, 
each such arc joining two distinct points of  $C$. Moreover, as $\xi(t)=\gamma(t)$, for $t\in \cup_{i=1}^p \partial J_i$, 
we can straighten out  the half-arcs of $\xi$ around these points. Namely,  
there exists a small neighborhood  $N(I_{\varepsilon})$ of $I_{\varepsilon}$ within $[0,1]$ such that after 
perturbing $\xi$ by an isotopy supported in $N(I_{\varepsilon})\cap (\cup_{i=1}^pJ_i)$ 
we have $\xi(t)=\gamma(t)$, for $t\in N(I_{\varepsilon})\cap (\cup_{i=1}^pJ_i)$. 

Now the arcs   $\xi(I_{\varepsilon})$ are disjoint both from $\xi(J_i\setminus N(I_{\varepsilon}))$ and 
$\gamma(J_i\setminus N(I_{\varepsilon}))$. 
There exists then a small enough open neighborhood $V$ of $\xi(I_{\varepsilon})$ within $U$ 
which is disjoint from  both $\xi(J_i\setminus N(I_{\varepsilon}))$ and 
$\gamma(J_i\setminus N(I_{\varepsilon}))$. Therefore there exists an orientation preserving  diffeomorphism 
$\psi$ supported on $M - V$, thus compactly supported,  such that 
$\psi(\xi(J_i\setminus N(I_{\varepsilon})))=\gamma(J_i\setminus N(I_{\varepsilon}))$,  and hence 
$\psi(\xi(J_i))=\eta(J_i)$.  Thus $\psi(\xi)$ and $\eta$ are isotopic rel $A$, as claimed. 
\end{proof}

\begin{lemma}\label{global}
The curves $\gamma$ and $\eta$ are isotopic rel $A$.
\end{lemma}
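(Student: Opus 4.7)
The plan is to exhibit the straight-line interpolation $\gamma_s(t) = (1-s)\gamma(t) + s\eta(t)$, $s,t \in [0,1]$, as a $\mathcal C^k$-isotopy from $\gamma$ to $\eta$ fixing $A$ pointwise. Since $\eta=\gamma$ on the complement $[0,1]\setminus I_{\varepsilon}$, which is a finite union of open intervals $J_i$ of length exceeding $\delta(\varepsilon)/2$, the isotopy is constant there, and all the work occurs on each connected component $K=[u,v]$ of $I_{\varepsilon}$, whose endpoints lie in $A$ and on which $\eta=\xi$.

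The first step is to upgrade the injectivity part of Lemma \ref{local} from sub-intervals of length at most $\delta(\varepsilon)/2$ to the whole component $K$. I choose $\varepsilon<1$ at the outset. For any $t\in K$, the definition of $I_{\varepsilon}$ furnishes points $s_1,s_2\in A$ with $t\in[s_1,s_2]\subset K$ and $|s_2-s_1|\le\delta(\varepsilon)/2$, so inequality (\ref{smalldot}) forces $\langle\dot\gamma(t),\dot\xi(t)\rangle > 1-\varepsilon$ pointwise on $K$. The computation in the proof of Lemma \ref{local} then yields $\langle\dot\gamma_s(t),\dot\gamma\rangle \ge 1-s\varepsilon>0$ for every $s\in[0,1]$ and $t\in K$, so $t\mapsto\langle\gamma_s(t),\dot\gamma\rangle$ is strictly increasing and $\gamma_s|_K$ is a simple arc. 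Global injectivity of $\gamma_s$ on $[0,1]$ follows because, again by Lemma \ref{local}, the image $\gamma_s(K)$ stays in the orthogonal strip $(\gamma(K)\times\R)\cap U$; the straightness of $\gamma$ in the chart $U$ makes these strips pairwise disjoint over distinct components $K$, and also disjoint from the linear arcs $\gamma(J_i)$ on which $\gamma_s=\gamma$.

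The most delicate point will be the $\mathcal C^k$-smoothness of $\gamma_s$ across the endpoints of each $K$. For this I would prove by induction on $j$ that $\xi^{(j)}(t_0)=\gamma^{(j)}(t_0)$ for every $t_0\in A$ and every $j\le k$: once the $(j-1)$st derivatives are known to coincide on $A$, the same difference-quotient argument along $A$ that earlier gave $\dot\gamma(t_0)=\dot\xi(t_0)$ (and, under $\mathcal C^2$, $\ddot\gamma(t_0)=\ddot\xi(t_0)$), applied to $\xi^{(j-1)}-\gamma^{(j-1)}$, yields the $j$th. Consequently $\eta$ is $\mathcal C^k$ on $[0,1]$, and $\gamma_s$ is a $\mathcal C^k$-family of $\mathcal C^k$-embeddings. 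Finally, $\gamma(t)=\xi(t)$ for $t\in A$ immediately gives $\gamma_s(t)=\gamma(t)$ on $A$ for every $s$, so the isotopy is rel $A$, completing the proof.
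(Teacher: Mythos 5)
Your proof is correct, and its skeleton — the straight-line interpolation $(1-s)\gamma+s\eta$, constancy outside $I_{\varepsilon}$, and the agreement of derivatives along $A$ — coincides with the paper's. Where you genuinely diverge is in the key injectivity step. The paper keeps working with the short intervals $[s_1,s_2]$ of length at most $\delta(\varepsilon)/2$ and rules out an intersection between two such pieces by an angle estimate: using \emph{both} (\ref{smalldot}) and the second-derivative bound (\ref{smallddot}) it shows that the chord from $\gamma_s(s_1)$ to any point of $\gamma_s|_{[s_1,s_2]}$ makes an angle $\beta$ with $\dot{\gamma}$ satisfying $\cos\beta\geq \frac{1-\varepsilon}{1+\varepsilon}$, so each piece lies in a cone of half-angle $\pi/3$, and a hypothetical intersection point would produce a Euclidean triangle with an angle in $[\pi/2,\pi)$ at one of its base vertices. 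Your argument replaces this with the monotonicity of the projection $t\mapsto\langle\gamma_s(t),\dot{\gamma}\rangle$, whose derivative is at least $1-s\varepsilon>0$ on an entire component $K$ of $I_{\varepsilon}$, together with the disjointness of the orthogonal strips over distinct components (the second assertion of Lemma \ref{local}). This buys two things: it needs only the first-derivative estimate (\ref{smalldot}) and not (\ref{smallddot}) in this lemma, and it treats a whole component of $I_{\varepsilon}$ at once rather than pairs of short pieces, avoiding the paper's somewhat delicate ``up to a symmetry of indices'' reduction in the triangle argument. Your explicit induction giving $\xi^{(j)}(t_0)=\gamma^{(j)}(t_0)$ for all $t_0\in A$ and $j\leq k$ also spells out the $\mathcal C^k$-regularity of $\eta$ across the endpoints of the components of $I_{\varepsilon}$, which the paper records only for $j=1,2$ and otherwise leaves implicit.
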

\begin{proof}
We will prove that the family 
\begin{equation}
\eta_s(t)=\left\{\begin{array}{cc}
\gamma_s(t), &  {\rm if }\; t\in I_{\varepsilon};\\
\gamma(t), &  {\rm if }\; t\not\in I_{\varepsilon}\\
\end{array}\right.
\end{equation}
is the desired isotopy. From Lemma \ref{local} it suffices to show that 
there are not intersections between the segments of curves 
$\gamma_s|_{[s_1,s_2]}$ and $\gamma_s|_{[s_3,s_4]}$, when 
$s_i\in A$ and $[s_1,s_2], [s_3,s_4]\subset I_{\varepsilon}$ are disjoint.

Let $p=\gamma_s|_{[s_1,s_2]}(t_0)$ be a point on the first curve segment. 
We want to estimate the angle $\beta$  of the Euclidean triangle 
with vertices $p, \gamma(s_1), \gamma(s_2)$ at $\gamma_s(s_1)$.  
We can write then: 
\begin{equation}
\langle \gamma_s(t_0)-\gamma_s(s_1), \dot{\gamma}(0)\rangle=
\int_{0}^{t_0-s_1}\langle \dot{\gamma}_s(s_1+x), \dot{\gamma}(0)\rangle \; dx=
\int_{0}^{t_0-s_1}1-s +s\langle \dot{\xi}(s_1+x), \dot{\gamma}(0)\rangle \; dx.
\end{equation}
Then (\ref{smalldot}) implies: 
\begin{equation}\label{angle}
\parallel\gamma_s(t_0)-\gamma_s(s_1)\parallel\cos(\beta)=\langle \gamma_s(t_0)-\gamma_s(s_1), \dot{\gamma}(0)\rangle \geq 
(t_0-s_1)(1-s\varepsilon).
\end{equation}

On the other hand from (\ref{smallddot}) we derive 
\begin{equation}
\parallel\dot{\xi}(x)-\dot{\xi}(s_1)\parallel\leq \varepsilon(x-s_1),
\end{equation}
and then: 
%\begin{equation}
%|\xi(t)-\xi(s_1)|\leq \dot{\xi}(s_1)(t-s_1)+\frac{\varepsilon}{2}(t-s_1)^2=
%t-s_1+\frac{\varepsilon}{2}(t-s_1)^2
%\end{equation}
%On the other hand we have 
\begin{equation}
\parallel\gamma_s(t_0)-\gamma_s(s_1)\parallel\leq   \int_{0}^{t_0-s_1}\parallel\dot{\gamma}_s(x)\parallel dx \leq \int_{0}^{t_0-s_1}
(s \parallel \dot{\xi}(x)\parallel +(1-s)) dx \leq t_0-s_1+\frac{\varepsilon}{2}(t_0-s_1)^2.
\end{equation}
From (\ref{angle}) we obtain 
\begin{equation}\label{aangle}
\cos(\beta)\geq \frac{1-s\varepsilon}{1+\frac{\varepsilon}{2}(t_0+s_1)}\geq \frac{1-\varepsilon}{1+\varepsilon}.
\end{equation}

If we choose $\varepsilon\leq \frac{1}{3}$ then 
$\beta\in [-\frac{\pi}{3},\frac{\pi}{3}]$.  

Assume now the contrary of our claim, namely that there exists 
some intersection point $p$ between 
$\gamma_s|_{[s_1,s_2]}$ and $\gamma_s|_{[s_3,s_4]}$. 
Up to a symmetry of indices we can assume that the 
Euclidean  triangle with vertices 
at $p$, $\gamma_s(s_1)$ and $\gamma_s(s_2)$ has the angle $\beta$ 
at  $\gamma_s(s_1)$ within the interval $[\frac{\pi}{2}, \pi)$.
This contradicts our estimates  (\ref{aangle}) for $\beta$. 
\end{proof}

The last ingredient of the proof of Theorem \ref{compact} is the following: 

\begin{lemma}
Assume that there exists an isotopy of class $\mathcal C^k$ 
between $\gamma$ and $\eta=\psi(\varphi(\gamma))$ rel  $A$. 
Then $\varphi$ is $\mathcal C^k$-isotopic to a compactly supported 
diffeomorphism  from  $P\D^{k,+}(M,C)$.  
\end{lemma}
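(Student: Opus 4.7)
The plan is to upgrade the given curve-level isotopy from $\gamma$ to $\eta=\psi\varphi\gamma$ rel $A$ to an ambient $\mathcal C^k$-isotopy of $M$ fixing $C$, which reduces the claim to showing that any diffeomorphism in $P\D^{k,+}(M,C)$ that fixes the arc $E$ pointwise represents a compactly supported class; this last point I would settle by a normal-form / Alexander-trick argument in a tubular neighborhood of $E$.

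First I would apply the parametric $\mathcal C^k$-isotopy extension theorem to the isotopy $\gamma_s:[0,1]\to M$ (with $\gamma_0=\gamma$, $\gamma_1=\psi\varphi\gamma$, $\gamma_s|_A=\gamma|_A$) and obtain an ambient $\mathcal C^k$-isotopy $\Phi_s$ of $M$ with $\Phi_0=\mathrm{id}$, $\Phi_s\circ\gamma=\gamma_s$, and $\Phi_s$ supported in an arbitrarily small neighborhood of the trace of the isotopy. Since $\gamma_s$ fixes $A$, one has $\Phi_s|_C=\mathrm{id}$, so each $\Phi_s$ lies in $P\D^{k,+}(M,C)$. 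Setting $G:=\Phi_1^{-1}\psi\varphi$ gives $G\in P\D^{k,+}(M,C)$ with $G|_E=\mathrm{id}$, and since $\{\Phi_s\}$ is an isotopy rel $C$, $[\Phi_1]$ is trivial in $P\M^{k,+}(M,C)$. Hence $[\varphi]=[\psi^{-1}][G]$ in $P\M^{k,+}(M,C)$. Because $\psi$ is compactly supported (in fact supported away from $C$ by construction) and compactly supported classes form a subgroup of $P\M^{k,+}(M,C)$, it suffices to prove that $[G]$ is compactly supported.

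To do this I would fix a $\mathcal C^k$ tubular neighborhood $T\cong E\times D^{m-1}$ of $E$ (the normal bundle is trivial since $E$ is a contractible arc and $\dim M\geq 2$). The derivative $dG$ along $E$ fixes $TE$ pointwise and induces a map $E\to GL^+(m-1)$ on the normal bundle; contractibility of $E$ and path-connectedness of $GL^+(m-1)$ allow a $\mathcal C^k$-isotopy of $G$ rel $E$, supported in $T$ and through $P\D^{k,+}(M,C)$, after which $dG|_E=\mathrm{id}$. Uniqueness of tubular neighborhoods permits a further such isotopy putting $G$ in fibered form $G(x,v)=(x,F_x(v))$ on $T$, with $F_x(0)=0$ and $dF_x(0)=\mathrm{id}$. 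The fiberwise Alexander trick $F_x^{(t)}(v):=t^{-1}F_x(tv)$ (extended by $F_x^{(0)}=\mathrm{id}$), damped by a smooth cutoff $\chi(|v|)$ vanishing for $|v|\geq\rho$ and extended by identity outside $T$, then isotopes $G$ rel $E$ to a diffeomorphism $G'$ equal to the identity on the smaller tubular neighborhood $T':=E\times\{|v|\leq\rho/2\}$. Since $C\subset E\subset T'$ is tame, one may choose a defining sequence $\{C_j\}$ of finite unions of disjoint $\mathcal C^k$-disks with $C_n\subset T'$ for $n$ large; then $G'|_{C_n}=\mathrm{id}$ is in particular isotopic to the inclusion rel $C$, so $[G]=[G']$ is compactly supported, and $[\varphi]=[\psi^{-1}][G]$ is too.

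The principal technical obstacle I expect is ensuring the parametric Alexander trick yields an honest $\mathcal C^k$-isotopy rather than merely a continuous one. The preliminary normalization $dF_x(0)=\mathrm{id}$ is exactly what makes Hadamard's lemma, applied twice, write $F_x(tv)-tv=t^2\sum_{i,j}v_iv_j R^x_{ij}(tv)$ with $R^x_{ij}$ of class $\mathcal C^{k-2}$ jointly in $(t,v,x)$, so that $F_x^{(t)}$ extends $\mathcal C^{k-2}$-smoothly across $t=0$. Under the standing hypothesis $k\geq 2$ this gives at least a continuous isotopy, and combining with a preliminary $\mathcal C^\infty$-smoothing of $G$ in the complement of a small neighborhood of $C$ (which does not alter the class in $P\M^{k,+}(M,C)$) produces the required $\mathcal C^k$-isotopy.
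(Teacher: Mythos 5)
Your proof is correct and follows the same two-stage strategy as the paper --- first reduce, via the curve isotopy, to a diffeomorphism $G$ fixing the arc $E$ pointwise, then isotope $G$ rel $E$ to a diffeomorphism that is the identity on a neighborhood of $E\supset C$ --- but you implement the second stage by a genuinely different mechanism. The paper works with the round disk $D$ having $E$ as a diameter: in dimension $2$ it uses orientation preservation to see that the circular order of $F^+,E,F^-$ at an endpoint of $E$ is respected, isotopes $\psi\varphi(F^{\pm})$ back onto $F^{\pm}$ rel $E$, and then implicitly invokes the connectivity of the diffeomorphism group of a half-disk rel a boundary arc; in dimension $\geq 3$ it isotopes the sphere $\psi\varphi(\partial D)$ back to $\partial D$ rel $E$ and concludes the same way. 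You instead take a tubular neighborhood of $E$, normalize the normal derivative along $E$ using the connectivity of $GL^+(m-1)$ --- which encodes exactly the same orientation information as the paper's circular-order remark --- and finish with a damped fiberwise Alexander trick. Your route is more uniform in the dimension and makes explicit two steps the paper elides: the initial appeal to the isotopy extension theorem (the paper simply asserts that $E$ is fixed by $\psi\circ\varphi$, which is only true after that step), and the $\mathcal C^k$ regularity bookkeeping; the paper's route is shorter because all of its isotopies take place among curves and spheres inside a single coordinate disk. One caveat on your final paragraph: smoothing $G$ away from a neighborhood of $C$ does not address the delicate region of the damped Alexander trick, which is the transition annulus in the fibres rather than a neighborhood of $C$; what actually rescues the argument is the normalization $dF_x(0)=\mathbf 1$ you have already performed, after which the identity $D_v^j\bigl(t^{-1}F_x(tv)\bigr)=t^{j-1}(D^jF_x)(tv)$ gives uniform $\mathcal C^k$ control as $t\to 0$ for $k\geq 2$, so this is a cosmetic rather than substantive issue.
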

\begin{proof}
We can assume that the  disk $D$ of diameter $E$ is contained in $U$. 

If the dimension of $M$ is 2, 
the endpoints of $E$ separate the circle $\partial D$ into two arcs, say 
$F^+$ and $F^-$. The circular order of the three arcs $F^+, E, F^-$  around an endpoint of $E$ 
is preserved by $\psi\circ\varphi\in \D^{k,+}(M,C)$. Note that $E$ is fixed 
by  $\psi\circ \varphi$. Thus there exists a $\mathcal C^k$-isotopy 
which is identity on $E$, sends $\psi(\varphi(F^+))$ to $F^+$ and  $\psi(\varphi(F^-))$ to $F^-$. 
Therefore $\psi\circ \varphi$ is isotopic to a diffeomorphism supported on 
the complement of $D$ and hence its class is compactly supported. The claim follows now, because $\psi$ is
equally  compactly supported.  

If the dimension of $M$ is at least 3, there exists an isotopy of $M$ 
sending $\psi(\varphi(\partial D))$ to $\partial D$, which is identity on $E$, because 
$\psi\circ\varphi\in P\D^{k,+}(M,C)$ and we conclude as above. 
\end{proof}

\subsection{Sparse sets and proofs of Theorems \ref{th:diff-countable}, \ref{V-type} and \ref{products}}

\subsubsection{Preliminaries}
Let $\cN_\epsilon(a)$ denote the $\epsilon$-neighborhood $|x-a|<\epsilon$ 
of $a$ in $\R$, $\cN^\pm_\epsilon(a)$
the punctured right and left semi-neighborhoods of $a$, 
i.e., $a < x<a+\epsilon$ and $a-\epsilon<x < a$, respectively. 

We say that $a\in C$ is a {\it left point} of $C$ if there is a 
left semi-neighborhood 
$\cN^-(a)$ such that $\cN^-(a)\cap C=\varnothing$. 
In the same way we define {\it right points}.

For $a\in C$  denote by $\Diff^k_a$ the stabilizer of $a$ in $\Diff^k(\R,C)$,
and by $\diff^k_a$ the group of $k$-germs of elements of the 
stabilizer of $a$ in $\diff^k(C)$. The superscript $+$ in $\Diff^{k,+}_a$
and  $\diff^{k,+}_a$ means that we only consider those diffeomorphisms 
that preserve the orientation of the interval, i.e. increasing.

Let $\varphi$ be a diffeomorphism with $\varphi(a)=a$. 
We say that $\varphi$ is $N$-{\it flat} at $a$ if: 

\begin{equation}
\varphi(x)- x= o\left((x-a)^N\right), \qquad {\rm as} \quad x\to a. 
\end{equation}

\begin{lemma}
\label{flat}
Assume  that $C$ is a $\sigma$-sparse subset of $\R$.  
Let $\varphi \in \Diff^1_a$ be 1-flat at $a\in C$. Then $\varphi\Bigr|_C$
 is the identity in a small neighborhood of $a$.
\end{lemma}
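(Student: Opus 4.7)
The plan is to show that every $b\in C$ sufficiently close to $a$ arises as the limit of an increasing sequence of fixed points of $\varphi$ produced by iterating the sparseness condition, and is therefore itself fixed by continuity.

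\textbf{Setup.} Since $\varphi\in\Diff^1_a$ is $1$-flat, $\varphi(a)=a$ and $\varphi'(a)=1$. Fix $\epsilon\in(0,\sigma)$ and, using continuity of $\varphi'$, choose $\delta>0$ with $|\varphi'(t)-1|<\epsilon$ for $|t-a|<\delta$. I will prove $\varphi(b)=b$ for every $b\in C\cap(a,a+\delta)$; the argument for $b\in C\cap(a-\delta,a)$ is symmetric.

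\textbf{Iteration.} Set $\beta_0=a$ and, while $\beta_n<b$, apply $\sigma$-sparseness to the pair $\beta_n,b\in C$ to obtain a complementary interval $(\alpha_{n+1},\beta_{n+1})\subset(\beta_n,b)$ with $\beta_{n+1}-\alpha_{n+1}\geq\sigma(b-\beta_n)$. Then $b-\beta_{n+1}\leq(1-\sigma)(b-\beta_n)$, so $\beta_n\nearrow b$. I claim by induction that each $\beta_n$ is fixed by $\varphi$. The base case $n=0$ is given. Assuming $\varphi(\beta_n)=\beta_n$, the mean value theorem applied to $\varphi-\mathrm{id}$ at the base point $\beta_n$ yields, for every $x\in[a,a+\delta]$,
\[
|\varphi(x)-x|=|\varphi'(\xi)-1|\cdot|x-\beta_n|\leq\epsilon|x-\beta_n|.
\]
In particular, $\alpha_{n+1}$ and $\beta_{n+1}$ are each displaced by at most $\epsilon(b-\beta_n)$. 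Since $\varphi$ is an orientation-preserving homeomorphism preserving $C$, the image $(\varphi(\alpha_{n+1}),\varphi(\beta_{n+1}))$ is again a connected component of $\R\setminus C$, so it either equals $(\alpha_{n+1},\beta_{n+1})$ or is disjoint from it. Using $\beta_{n+1}-\alpha_{n+1}\geq\sigma(b-\beta_n)$ and $\epsilon<\sigma$ one obtains $\varphi(\beta_{n+1})-\alpha_{n+1}\geq(\sigma-\epsilon)(b-\beta_n)>0$ and $\varphi(\alpha_{n+1})-\beta_{n+1}\leq-(\sigma-\epsilon)(b-\beta_n)<0$, forcing the two intervals to overlap; hence they coincide, and in particular $\varphi(\beta_{n+1})=\beta_{n+1}$.

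\textbf{Conclusion and main obstacle.} The set $\{x:\varphi(x)=x\}$ is closed and contains all $\beta_n$, so passing to the limit gives $\varphi(b)=b$, and $\varphi$ is the identity on $C\cap[a-\delta,a+\delta]$. The main subtlety is in the inductive step: the raw $1$-flatness bound $|\varphi(x)-x|=o(x-a)$ measures displacement only against the distance from $a$, which is too coarse once $\beta_n$ approaches $b$, since the relevant gap size $\sigma(b-\beta_n)$ may be arbitrarily small compared with $b-a$. The key trick is to recenter: at each stage the already-fixed point $\beta_n$ becomes the new base point of the mean value estimate, turning $1$-flatness at $a$ into a Lipschitz-type control $|\varphi(x)-x|\leq\epsilon|x-\beta_n|$ whose scale matches the sparseness gap and closes the induction.
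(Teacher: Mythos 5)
Your proof is correct, but it takes a genuinely different route from the paper's. The paper argues by contradiction and through the dynamics of $\varphi$: it first shows that fixed points of $\varphi$ must accumulate at $a$ (otherwise a maximal gap in $(a,b)$ together with the mean value theorem forces $\varphi'(a)\ge 1+\sigma$), then traps a non-fixed point $v_j\in C$ between two fixed points $\alpha_j<\beta_j$, iterates $\varphi$ on $v_j$, and observes that the orbit must jump over the large gap guaranteed by sparseness inside $(\alpha_j,\beta_j)$; the mean value theorem then produces points $\xi_j\to a$ with $\varphi'(\xi_j)\ge 1+\sigma$, contradicting $\varphi'(a)=1$. You instead argue directly: for a fixed target $b\in C$ you build an increasing sequence of fixed points converging to $b$, and the engine is a ``gap rigidity'' observation --- a connected component of $\R\setminus C$ either maps to itself or to a disjoint component, and once the base point $\beta_n$ is fixed the recentered estimate $|\varphi(x)-x|\le\epsilon|x-\beta_n|$ with $\epsilon<\sigma$ rules out the jump. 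This avoids the iteration of $\varphi$ and the accumulation-of-fixed-points step entirely, and it yields a slightly more quantitative conclusion: $\varphi$ fixes $C$ pointwise on any interval around $a$ on which $|\varphi'-1|<\sigma$, a uniformity that is convenient later (e.g.\ in the proof of Theorem \ref{th:diff-countable}). One small point to tighten: you should take $(\alpha_{n+1},\beta_{n+1})$ to be the \emph{maximal} complementary interval (i.e.\ the connected component of $\R\setminus C$ containing the gap provided by sparseness); only then are its endpoints in $C$ and is its image under $\varphi$ again a component. This is harmless, since the component containing the given gap still lies in $(\beta_n,b)$ (its closure cannot contain $\beta_n$ or $b$, which lie in $C$) and is at least as long, but the step ``the image is again a connected component'' does not apply to a non-maximal complementary interval.
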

\begin{proof}
Observe first that $\varphi \in \Diff^{1,+}_a$, since $\varphi'(a)=1$ and hence $\varphi$ must be increasing.  
We can assume without loss of generality 
that $a$ is not a right point of $C$. Suppose that  
$\varphi$ is nontrivial on $\cN^+_\delta(a)\cap C$ for any $\delta>0$.

We first claim that fixed points of $\varphi$ accumulate 
from the right to $a$. Otherwise, there exists some $\delta$ 
such that $\varphi(x)-x$ keeps constant sign for all 
$x\in \cN^+_\delta(a)$. Assume that this sign is positive 
and choose $b\in \cN^+_\delta(a)\cap C$.
Let $(\alpha,\beta)\subset (a,b)$ be a maximal complementary interval of length 
at least $\sigma(b-a)$. By maximality $\alpha\in C$. 
Since $\varphi(\alpha)\in C$ and $\varphi(\alpha) >\alpha$ 
we have $\varphi(\alpha)\geq \beta$, so that:

\begin{equation}\label{meanvalue}
\frac{\varphi(\alpha)-a}{\alpha -a}=\frac{\varphi(\alpha)-\alpha}{\alpha -a} +1
\geq \frac{\beta-\alpha}{\alpha-a}+1\geq \frac{\sigma (b-a)}{\alpha -a}+1
\geq 1+\sigma.
\end{equation}
By the mean value theorem  there exists $\xi\in (a,\alpha)$ such that: 
\[ \varphi'(\xi)=\frac{\varphi(\alpha)-a}{\alpha -a}\geq 1+\sigma.\]
But this inequality contradicts   
the 1-flatness condition for small $\delta$, as taking the limit when 
$\delta\to 0$ we would obtain $\varphi'(a)\geq 1+\sigma$.

When the sign of $\varphi(x)-x$ is negative 
we reach the same conclusion by considering $\varphi(\beta)-\beta$. 
This proves the claim.

Therefore there is a decreasing sequence $u_k$ accumulating at $a$, 
such that $\varphi(u_k)=u_k$. As $\varphi\Bigr|_{C\cap\cN^+_\delta(a)}$ 
is not identity for any $\delta >0$ 
there exists a decreasing sequence $v_k\in C$ accumulating on $a$, such 
that all $\varphi(v_k)-v_k$ are of the same sign, say positive. Therefore, 
up to passing to a subsequence, we obtain a sequence of  
disjoint intervals $(\alpha_j,\beta_j)$ such that $\beta_{j+1}\le \alpha_{j}$, 
$\varphi(\alpha_j)=\alpha_j$, $\varphi(\beta_j)=\beta_j$,
and $v_j\in (\alpha_j,\beta_j)$.

Since $\varphi$ is monotone, it has to be monotone increasing, by above.
Thus $\varphi^{k}(v_j)\in [\alpha_j,\beta_j]$, for any $k\in\Z$, 
where $\varphi^{k}$ denotes the $k$-th iterate of $\varphi$. 
The bi-infinite sequence  $\varphi^{k}(v_j)$ is increasing and so:
\begin{equation}
\alpha_j \leq \lim_{k\to -\infty}\varphi^{k}(v_j) <  
\lim_{k\to \infty}\varphi^{k}(v_j) \leq \beta_j.  
\end{equation}
Now $\lim_{k\to -\infty}\varphi^{k}(v_j)$ and  
$\lim_{k\to \infty}\varphi^{k}(v_j)$ are fixed points of $\varphi$ 
and we can assume, without loss of generality that our choice  
of intervals is such that 
$\alpha_j = \lim_{k\to -\infty}\varphi^{k}(v_j)$,   
$\lim_{k\to \infty}\varphi^{k}(v_j) =\beta_j$. 
In particular $\alpha_j,\beta_j\in C$. 
  
As $C$ is $\sigma$-sparse there is a complementary interval
$(\gamma_j,\delta_j) \subset (\alpha_j,\beta_j)$ 
of length  at least $\sigma (\beta_j-\alpha_j)$.
The interval $(\gamma_j,\delta_j)$ cannot contain any 
point $\varphi^k(v_j)$ and thus there exists some $k_j\in \Z$ 
such that 
\begin{equation}
\varphi^{k_j}(v_j)\leq \gamma_j <\delta_j \leq \varphi^{k_j+1}(v_j).
\end{equation}

Denote $\varphi^{k_j}(v_j)=\eta_j$. We have then 
\begin{equation}
\frac{\varphi(\eta_j)-\varphi(\alpha_j)}{\eta_j-\alpha_j}-1
= \frac{\varphi(\eta_j)-\eta_j}{\eta_j-\alpha_j}\geq \frac{\sigma (\beta_j-\alpha_j)}{\eta_j-\alpha_j}\geq \sigma.
\end{equation}
By the mean value theorem there exists 
$\xi_j\in(\alpha_j,\eta_j)$ such that 
\begin{equation}
\frac{\varphi(\eta_j)-\varphi(\alpha_j)}{\eta_j-\alpha_j}=\varphi'(\xi_j).   
\end{equation}
and thus such that $\varphi'(\xi_j)\geq 1+\sigma$. 
As $\varphi'$ is continuous at $a$, by letting $j$ go to infinity 
we derive $\varphi'(a)\geq 1+\sigma$ which contradicts the 1-flatness. 
\end{proof}

\begin{lemma}
\label{nonflat}
If $C$ is $\sigma$-sparse and 
$\varphi\in \Diff^1_a$ is not 1-flat then 
\begin{equation}
|\varphi'(a)-1| \geq \sigma. 
\end{equation}
\end{lemma}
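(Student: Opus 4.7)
The strategy is to assume $\lambda:=\varphi'(a)$ satisfies $|\lambda-1|<\sigma$ and derive a contradiction with the hypothesis that $\varphi$ is not 1-flat, by running the same mean-value/sparseness argument already used inside the proof of Lemma \ref{flat}, but now using the inequality quantitatively instead of only to contradict flatness.

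First I would dispose of some trivial cases. Since not 1-flat means $\lambda\neq 1$, the only content is when $\lambda$ is close to $1$. If $\lambda<0$ then $|\lambda-1|>1\ge\sigma$ (any $\sigma$-sparse set satisfies $\sigma\le 1$, because the gap $\beta-\alpha$ is contained in $(a,b)$), so we may assume $\lambda>0$, in which case $\varphi$ is monotone increasing near $a$. Because $a\in C$ is not isolated, $a$ is a one-sided limit point of $C$; without loss of generality, from the right.

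Next, I would observe that fixed points of $\varphi$ cannot accumulate on $a$ from the right under our assumption: if $u_n\to a^+$ with $\varphi(u_n)=u_n$, then
\[
\lambda=\lim_n\frac{\varphi(u_n)-\varphi(a)}{u_n-a}=1,
\]
contradicting $\lambda\ne 1$. Hence on some right-punctured neighborhood $\cN^+_\delta(a)$ the function $\varphi(x)-x$ has constant, nonzero sign.

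Now I would split into the two sign cases, each of which is a direct sparseness estimate. Fix $b\in\cN^+_\delta(a)\cap C$ and use $\sigma$-sparseness to choose a complementary interval $(\alpha,\beta)\subset(a,b)$ with $\beta-\alpha\ge\sigma(b-a)$ and $\alpha,\beta\in C$. In the \emph{positive} case, $\varphi(\alpha)\in C$ and $\varphi(\alpha)>\alpha$, so $\varphi(\alpha)\ge\beta$; the mean value theorem on $[a,\alpha]$ yields some $\xi\in(a,\alpha)$ with
\[
\varphi'(\xi)=\frac{\varphi(\alpha)-a}{\alpha-a}\ge 1+\frac{\beta-\alpha}{\alpha-a}\ge 1+\sigma\cdot\frac{b-a}{\alpha-a}\ge 1+\sigma.
\]
Letting $b\to a^+$ forces $\xi\to a^+$, and continuity of $\varphi'$ at $a$ gives $\lambda\ge 1+\sigma$, contradicting $|\lambda-1|<\sigma$. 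In the \emph{negative} case, symmetrically, $\varphi(\beta)\le\alpha$ and the mean value theorem on $[a,\beta]$ gives
\[
\varphi'(\xi)=\frac{\varphi(\beta)-a}{\beta-a}\le\frac{\alpha-a}{\beta-a}=1-\frac{\beta-\alpha}{\beta-a}\le 1-\sigma,
\]
using $\beta-a\le b-a$ to convert $\sigma(b-a)$ into $\sigma(\beta-a)$; passing $b\to a^+$ gives $\lambda\le 1-\sigma$, again a contradiction.

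The only real issue to watch is the one-sidedness: the sparseness hypothesis is quantitatively sharp only when the chosen interval $(\alpha,\beta)$ sits close enough to $a$ relative to $b-a$, which is why the bound $\frac{b-a}{\alpha-a}\ge 1$ (resp.\ $\frac{b-a}{\beta-a}\ge 1$) is used in each case. I do not expect any genuine obstacle: both cases are essentially already contained in the positive-sign part of the proof of Lemma \ref{flat}, and the brief indication in that proof of how to handle the negative-sign case by considering $\varphi(\beta)-\beta$ transfers verbatim to the present setting.
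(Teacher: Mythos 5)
Your proof is correct and follows essentially the same route as the paper's: a large complementary gap $(\alpha,\beta)\subset(a,b)$ forces $\varphi$ to jump across it, the mean value theorem then produces $\xi$ arbitrarily close to $a$ with $\varphi'(\xi)\ge 1+\sigma$ (resp.\ $\le 1-\sigma$), and continuity of $\varphi'$ at $a$ concludes. The only organizational difference is that you fix the sign of $\varphi(x)-x$ near $a$ via non-accumulation of fixed points, whereas the paper deduces $\varphi(\alpha)>\alpha$ directly from $\varphi'(a)>1$ by a second mean value argument; both work, and your explicit treatment of the decreasing and $\varphi'(a)<1$ cases merely fills in details the paper dismisses as ``similar.''
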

\begin{proof}
Let  $\varphi\in \Diff^1_a$ not 1-flat, 
so that $\varphi'(a)\neq 1$. Let us further suppose that 
$\varphi'(a) >1$, the other situation being similar. 
For any $\delta>0$ we can choose $b\in \cN^+_\delta(a)\cap C$.
There is then a maximal complementary interval $(\alpha,\beta)\subset (a,b)$ 
of length at least $\sigma (b-a)$.  
By maximality $\alpha\in C$. 

We claim that 
for small enough $\delta$ we have $\varphi(\alpha) >\alpha$. 
Assume the contrary. By the mean value theorem 
there exists $\xi\in(a,\alpha)\subset (a,b)$ such that 
\begin{equation}
\varphi'(\xi)=1+\frac{\varphi(\alpha)-\alpha}{\alpha-a}\leq 1
\end{equation}
and letting $\delta$ go to $0$ we would obtain $\varphi'(a)\leq 1$, 
contradicting our assumptions. 
Thus $\varphi(\alpha) >\alpha$, and hence 
$\varphi(\alpha)\geq \beta$. 
As above, the mean value theorem provides us $\xi\in (a,\alpha)$ 
so that  
\begin{equation}
\varphi'(\xi)=1+\frac{\varphi(\alpha)-\alpha}{\alpha-a}\geq 1+\sigma.
\end{equation} 
Letting $\delta$ go to zero we obtain $\varphi'(a)\geq 1+\sigma$. 
When $\varphi'(a) <1$ we can use similar methods or pass to 
$\varphi^{-1}$ in order to obtain  $\varphi'(a)\leq 1-\sigma$. 
\end{proof}

\begin{lemma}
\label{l:stabilizer}
Let  $C$ sparse and $a\in C$. Then one of the following holds:
\begin{enumerate}
\item  either for any $\varphi\in\Diff^{1,+}_a$, the restriction 
$\varphi\Bigr|_C$ is identity in a small neighborhood  of $a$, so that 
$\diff^{1,+}_a=1$;
\item or else, there is $\psi_a\in \Diff^{1,+}_a$ such that for any 
$\varphi\in \Diff^{1,+}_a$ the restriction of $\varphi$
to a small neighborhood $\cN_\delta(a)\cap C$ coincides
with the iterate $\psi_a^k\Bigr|_C$ for some $k\in\Z\setminus\{0\}$.
Moreover, any such $\psi_a$ is of the form:  
\begin{equation}\label{form}
\psi_a(x)=a+p(x-a)+o(x-a),\qquad {\rm as} \quad x\to a, 
\end{equation}
where $|p-1|\geq \sigma$. Thus $\diff^{1,+}_a=\Z$. 
\end{enumerate}
\end{lemma}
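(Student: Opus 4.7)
The plan is to show that the derivative at $a$ gives an injective homomorphism from $\diff^{1,+}_a$ into the multiplicative group $(\R_{>0},\cdot)$ whose image is a discrete, hence cyclic, subgroup; the classification of discrete subgroups of $\R$ will then deliver the dichotomy.

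First I would consider the homomorphism $D_a \colon \Diff^{1,+}_a \to (\R_{>0},\cdot)$, $\varphi \mapsto \varphi'(a)$, which is a group homomorphism by the chain rule. I claim $D_a$ descends to an injection $\bar D_a \colon \diff^{1,+}_a \hookrightarrow \R_{>0}$. Indeed, if $\varphi'(a)=1$ then $\varphi$ is $1$-flat at $a$, so by Lemma \ref{flat} the restriction $\varphi\bigr|_C$ is the identity on a neighborhood of $a$, whence $\varphi$ represents the trivial germ in $\diff^{1,+}_a$. Conversely, if $\varphi\bigr|_C$ is the identity near $a$, then since $a$ is an accumulation point of $C$ we can compute $\varphi'(a)$ using a sequence $x_n \in C$ with $x_n \to a$ and obtain $\varphi'(a)=1$.

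Next, Lemma \ref{nonflat} shows that the image $G := \bar D_a(\diff^{1,+}_a) \subset \R_{>0}$ is discrete: every $p\in G$ with $p\neq 1$ satisfies $|p-1|\geq \sigma$, so $1$ is an isolated point of $G$, and since $G$ is a subgroup this propagates to discreteness at every point. Passing through the logarithm, $\log G$ is a discrete subgroup of $(\R,+)$, hence either trivial or of the form $\alpha\Z$ for some $\alpha>0$.

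Finally, the two cases for $G$ yield the two alternatives of the lemma. If $G=\{1\}$ then every $\varphi\in \Diff^{1,+}_a$ is $1$-flat and Lemma \ref{flat} gives case (1). Otherwise, choose $\psi_a \in \Diff^{1,+}_a$ whose derivative $p=\psi_a'(a)$ generates $G$; for any $\varphi\in\Diff^{1,+}_a$ there is a unique $k\in\Z$ with $\varphi'(a)=p^k$, so $\varphi\circ\psi_a^{-k}$ has derivative $1$ at $a$, is therefore $1$-flat, and by Lemma \ref{flat} acts as the identity on $C$ near $a$, i.e.\ $\varphi\bigr|_C = \psi_a^k\bigr|_C$ in some $\cN_\delta(a)\cap C$. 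The expansion (\ref{form}) with $|p-1|\geq\sigma$ is then just Lemma \ref{nonflat} applied to $\psi_a$. I do not expect a serious obstacle, since the analytic content is already packaged in Lemmas \ref{flat} and \ref{nonflat}; the remaining argument is essentially the classification of discrete subgroups of $\R$.
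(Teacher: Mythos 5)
Your proposal is correct and follows essentially the same route as the paper: the derivative-at-$a$ homomorphism (the paper's $\chi$), discreteness of its image via Lemma \ref{nonflat}, and triviality of its kernel on $C$ via Lemma \ref{flat}, yielding the cyclic dichotomy. The only cosmetic difference is that you spell out explicitly that the map descends to an injection on germs, which the paper leaves implicit in its factorization $\varphi=\psi_a^k\theta$.
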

\begin{proof}
If the first alternative doesn't hold, by  Lemma \ref{flat}  we can assume that  
there exists some $\varphi\in \Diff^{1,+}_a$ which is not 1-flat. 

The map $\chi:\Diff^1_a\to \R^*$ given by 
$\chi(\varphi)=\varphi'(a)$ is easily seen to be 
a group homomorphism. By Lemma \ref{nonflat} 
the subgroup $\chi(\Diff^{1,+}_a)$ of $\R_+^*$ 
is discrete and non-trivial and thus it is 
isomorphic to $\Z$. Let $\psi_a\in \Diff^{1,+}_a$ be a germ whose image 
$\chi(\psi_a)$ is a generator of $\chi(\Diff^{1,+}_a)$.
Then $\psi_a$ is not 1-flat and thus, by Lemma \ref{nonflat},  it satisfies the equation 
(\ref{form}). 

If $\varphi\in  \Diff^{1,+}_a$, then we can write 
$\varphi=\psi_a^k\theta$, for some $k\in \Z\setminus\{0\}$ and $\theta\in \ker \chi$. 
But the kernel of $\chi$ consists of those $\theta\in \Diff^{1,+}_a$ 
which are 1-flat. By Lemma \ref{flat}  the restriction
of $\theta$ to some neighborhood $\cN_\delta(a)\cap C$ is identity. This proves that 
$\varphi=\psi_a^k$ in a neighborhood $\cN_\delta(a)\cap C$, as claimed.
\end{proof}

\begin{remark}\label{stab} 
If $C=C_{\lambda}$ is the ternary central Cantor set in $\R$, then 
$\diff^{1,+}_a(C_{\lambda})$ is not always $\Z$. An element 
$a$ of $C_{\lambda}$ is called  $\lambda$-rational if 
it has an eventually periodic development
\[ a=\sum_{i=1}^{\infty}a_i \lambda^i,\]
where $a_i\in \{0,\lambda-1\}$. 
Therefore $\diff^{1,+}_a(C_{\lambda})$ is  $\Z$ if and only if 
$a$ is $\lambda$-rational and trivial, otherwise. 
\end{remark}
\begin{remark}\label{decrease}
Since the subgroup $\chi(\Diff^1_a)\subset \R^*$ is discrete there exists $\lambda\geq 1$ such that 
$\chi(\Diff^1_a)$ is of the form $\langle \lambda\rangle$, $\langle -\lambda\rangle$ or 
$\langle\pm \lambda\rangle$. Here $\langle x\rangle$ denotes the subgroup of $\R^*$ generated by $x$. 
In particular $\diff^1_a$ is isomorphic to  either $1$, $\Z/2\Z$, $\Z$, or else $\Z\oplus \Z/2\Z$. 

However, if $a$ is a left (or right) point of $C$ then there is no decreasing homeomorphism of $(\R,C)$ 
fixing $a$. Thus  $\Diff^1_a=\Diff^{1,+}_a$, and the result of Lemma \ref{form} holds more generally for 
 $\Diff^1_a$. 
\end{remark}

\subsubsection{Proof of Theorem \ref{th:diff-countable}}
We need to show that the identity is an isolated point 
of the group $\diff^1(C)$, if $C$ is $\sigma$-sparse. To this purpose 
consider an element $\diff^1(C)$ having a representative
$\psi\in \Diff^1(\R,C)$ such that 
\begin{equation}
1-\sigma<\psi'(x)<1+\sigma, \; {\rm for \; any } \; x\in C. 
\end{equation}

There is no loss of generality 
in assuming that $\psi\in \Diff^{1,+}(\R,C)$, i.e. that $\psi$ 
is monotone increasing.  The minimal element $\min C$ of 
$C$ should therefore be fixed by any 
element of $\Diff^{1,+}(\R,C)$, in particular by $\psi$. 
By Lemma \ref{nonflat}, $\psi\in \Diff^1_{\min C}(\R,C)$ must be 1-flat at $\min C$.

Consider the set 
\begin{equation}
U=\{x\in C; \psi(z)=z, \; {\rm for \; any} \; z\in C\cap (-\infty,x]\}.
\end{equation}
The set $U$ is nonempty, as $\min C\in U$. 
Let $\xi=\sup U$. 

Assume first that  $\xi$ is not a right point of $C$. 
Since $\psi$ is continuous, $\xi\in U$ so that 
$\psi\in\Diff^1_{\xi}$. From Lemma \ref{nonflat} 
$\psi'(\xi)=1$ and $\psi$ is 1-flat at $\xi$. 
According to Lemma \ref{flat} there is 
some $\delta >0$ such that the restriction 
$\psi\Bigr|_{C\cap \cN^+_{\delta}(\xi)}$ is  identity, 
which contradicts the maximality of $\xi$.  

If $\xi$ is a right point of $C$, then there is some 
maximal complementary interval $(\xi,\eta)\subset \R\setminus C$.
Since $\psi\Bigr|_{C\cap [\min C, \xi]}$ is identity it follows 
that  $\psi( C\cap [\xi, \infty))\subset C\cap [\xi, \infty)$. 
As $\eta$ is the minimal element of $C\cap (\xi, \infty)$ 
it should be a fixed point of  $\psi\Bigr|_{[\xi, \infty)}$ and so $\eta\in U$.   
This contradicts the maximality of $\xi$. 
Hence $\psi$ is identity on $C$.

\begin{remark}\label{trivial}
The same arguments show that if $C\subset [0,1]$ is a sparse Cantor set and 
$\diff^{1,+}_0(C)=1$, then $\diff^{1,+}(C)=1$. 
\end{remark}

For the second claim of the theorem let $V_{\delta}$ be the set of those elements 
in $\diff^1_{S^1}(C)$ having a representative 
$\psi\in\Diff^1(S^1,C)$ such that 
\begin{equation}
1-\delta<\psi'(x)<1+\delta,  \; {\rm for \; any } \; x\in C.
\label{eq:okr}
\end{equation}
Here elements of $\Diff^1(S^1)$ are identified with real 
periodic functions on $\R$.  
We choose  $\delta<\min(\sigma, 0.3)$.
It is enough to prove that $V_{\delta}$ is finite. 

Consider a complementary interval $J\subset S^1-C$ of maximal 
possible length, say $|J|$. Consider its right end $\eta$, with respect to the cyclic orientation. 
If $\psi\in V_{\delta}$ is such that $\psi(\eta)=\eta$, 
then the arguments from the proof of Theorem \ref{th:diff-countable} 
show that $\psi(x)=x$ when $x\in C$.

We claim that the set of intervals of the 
form $\psi(J)$, for $\psi\in V_{\delta}$ is finite.  
Each $\psi(J)$ is a maximal complementary interval, because 
if it were contained in a larger interval $J'$, then 
$\psi^{-1}(J)$ would be a complementary interval strictly larger than 
$J$. This shows that any two such intervals $\psi(J)$ and $\varphi(J)$ 
are either disjoint or they coincide, for otherwise their union 
would contradict their maximality. 
Further, each $\psi(J)$ has length at least $(1-\delta)|J|$. 
This shows that the set of intervals is a finite set 
$\{J_1,J_2,\ldots,J_k\}$. 

Assume that $\psi(J)=\varphi(J)$ and both $\psi$ and $\varphi$ 
preserve the orientation of the circle. If the right end 
of $J$ is $\eta$, with respect to the cyclic orientation, then 
$\varphi\circ\psi^{-1}$ sends $J$ to $J$ and hence fixes $\eta$.  
Then the arguments from the proof of Theorem \ref{th:diff-countable} 
show that $\varphi\circ\psi^{-1}(x)=x$ when $x\in C$.
It follows that there are at most $2k$ elements in $V_{\delta}$, 
finishing the proof of the first part.

\subsubsection{Proof of Theorem \ref{V-type}}\label{section-V-type}
Let $C$ be a Cantor set contained within a  $\mathcal C^1$-embedded simple closed 
curve $L$ on the orientable manifold $M$. For the sake of simplicity 
we will suppose from now on that $M$ is a surface, but 
the proof goes on without essential modifications in higher dimensions. 
Let $\varphi$ be a diffeomorphism of $M$ sending
$C$ into $C$. Fix a parameterization of a 
collar $N$ such that $(N,L)$ 
is identified with $(L\times [-1,1], L\times\{0\})$.  
Denote by $\pi:N\to L$ the projection on the first 
factor and by $h:N\to [-1,1]$ the projection on the second factor.

There exists an open neighborhood $U$ of $C$ in $L$ so that 
$\varphi(U)\subset N$. In particular, the closure $\overline{U}$ is a 
finite union of closed intervals. 
The map $\varphi:\overline{U}\to N=L\times[-1,1]$ has the property 
$h\circ \varphi(a)=0$, for each $a\in C$.  Therefore the differential
$D_a\, (h\circ\varphi)=0$, for each $a\in C$. 
Since $\varphi$ is a diffeomorphism $D_a (\pi\circ\varphi) \neq 0$, for 
every $a\in C$.

For each $a\in C$ consider an open interval neighborhood $U_a$ 
within $L$, so that $D_x (\pi\circ\varphi) \neq 0$ and 
$\parallel D_x\, (h\circ\varphi)\parallel<1$, for every $x\in \overline{U_a}$. 
We obtain an open covering $\{U_a; a\in C\}$ 
of $C$. As $C$ is compact there exists a  
finite subcovering by intervals $\{U_1,U_2,\ldots,U_n\}$. 
Without loss of generality one can suppose that $U_j\subset U$, for all $j$. 
We consider such a covering having the minimal number of elements.
This implies that $\overline{U_j}$ are disjoint intervals. 
 
For every $j$ the map $\pi\Bigr|_{\varphi(\overline{U_j})}:\varphi(\overline{U_j})\to 
\pi(\varphi(\overline{U_j}))\subset L$ is a diffeomorphism on its 
image, since $\varphi(\overline{U_j})$ is connected and 
$D_x (\pi\circ\varphi) \neq 0$, for any $x\in\overline{U_j}$. 
 
Consider a slightly smaller closed interval 
$I_j\subset U_j$ such that $I_j\cap C=U_j\cap C$. 

Let $\mu$ be a positive smooth  function on 
$\sqcup_{j=1}^n\overline{U_j}$ such $\mu(t)$  equals 1 near 
the boundary points and vanishes on $\sqcup_{j=1}^nI_j$.
Define $\phi_s:\sqcup_{j=1}^n\overline{U_j}\to N$ by:
\begin{equation}
\phi_s(x)=(\pi\circ\varphi(x), (s\mu(x)+1-s)\cdot h\circ\varphi (x)). 
\end{equation}
Then $\phi_0(x)=\varphi(x)$ and for each $s\in [0,1]$ we have 
$\phi_s(x)=\varphi(x)$, for $x$ near 
the boundary points of $\sqcup_{j=1}^n\overline{U_j}$. Furthermore   
$\phi_1(x)=\pi\circ\varphi(x)\in L$, when $x\in \sqcup_{j=1}^nI_j$.
One should also notice that $\phi_s(x)=\varphi(x)$, for each $x\in C$ and $s\in [0,1]$. 

Let now denote $J_j=\pi\circ\varphi(I_j)$. 
It is clear that $C=\varphi(C)\subset \cup_{j=1}^nJ_j$. 
We claim that we can assume that $J_j$ are disjoint. 
Indeed, since $\varphi$ is bijective we have 
$\varphi(I_j\cap C)\cap \varphi(I_k\cap C)=\emptyset$, for any $j\neq k$. 
Since $\varphi(I_j\cap C)$ are closed subsets of $L$ there exists
$\varepsilon >0$ so that 
$d(\varphi(I_j\cap C), \varphi(I_k\cap C))\geq \varepsilon$, for 
$j\neq k$, where $d$ is a metric on $L$. 
Since $\phi_1(I_j\cap C)=\varphi(I_j\cap C)$, we have 
$d(\phi_1(I_j\cap C), \phi_1(I_k\cap C))\geq \varepsilon$, for 
$j\neq k$. Thus there exist some open neighborhoods 
$J_j'$ of $\phi_1(I_j\cap C)$ within $L$ so that 
$d(J_j', J_k')\geq \frac{1}{2}\varepsilon$, for all 
$j\neq k$. As $\phi_1$ is a diffeomorphism there exist
open neighborhoods $I_j'$ of $I_j\cap C$ with the property that 
$\phi_1(I_j')\subset J_j'$, for all $j$. 
Now $I_j'$ and $J_j'$ are finite unions of open intervals. 
We can replace them by closed intervals with the same intersection 
with $C$. This produces two new families of disjoint 
closed intervals related by $\phi_1$, as the initial situation.   
This proves the claim.

We obtained that there exist two coverings 
$\{I_1,I_2,\ldots,I_n\}$  and  $\{J_1,J_2,\ldots,J_n\}$ of $C$ by 
disjoint closed intervals and a diffeomorphism
$\phi_1: \sqcup_{j=1}^nI_j\to \sqcup_{j=1}^nJ_j$ 
such that $\phi_1(x)=\varphi(x)$, for any $x\in C$.

Notice  that the sign of $D_a(\pi\circ\varphi)$ 
might not be the same for all intervals.

Every partition of $C$ induced by a covering $\{I_1,I_2,\ldots,I_n\}$
as above is determined by the choice of complementary intervals, namely 
the $n-1$ connected components of  $L\setminus \cup_{j=1}^nI_j$. 
It follows that there are only countably many finite partitions 
of $C$ of the type considered here. 
Next, the set of those elements of $\diff^1_{M}(C)$ which arise from 
partitions induced by the coverings $\{I_1,I_2,\ldots,I_n\}$  and  $\{J_1,J_2,\ldots,J_n\}$ of $C$ is acted upon transitively by the stabilizer of 
one partition. The stabilizer of one partition embeds 
into the product  of $\diff^1_{I_j}(C\cap I_j)$. 
Theorem \ref{th:diff-countable} then  implies that 
$\diff^1_M(C)$ is countable.

\subsubsection{Proof of Theorem \ref{products}}

Before to proceed we need some preparatory material. 
Let $A\subset \R^n$ be a set without isolated points. 
Let $T_p\R^n$ denote the tangent space at $p$  on $\R^n$ and 
$UT_p\R^n\subset T_p\R^n$ the sphere of unit vectors.  
For any $p\in A$ one defines  the {\em unit tangent spread} $UT_pA\subset UT_p\R^n$ at $p$ 
as the set of vectors $v\in UT_p\R^n$ for which there exists 
a sequence of points $a_i\in A$ with 
$\lim_{i\to \infty}a_i=p$ and 
\[\lim_{i\to \infty}\frac{a_i-p}{\parallel a_i-p\parallel}=v.\] 
Vectors in $UT_pA$ will also be called {\em (unit) tangent vectors} at $p$ to $A$. 
We also set $T_pA=\R_+\cdot UT_pA\subset T_p\R^n$ for the {\em tangent spread} at $p$.

A differentiable map $\varphi:(\R^n, A)\to (\R^n,B)$   induces a tangent map 
$T_p\varphi: T_pA\to T_{\varphi(p)}B$. 
Specifically, let  $D_p\varphi:T_p\R^n\to T_{\varphi(p)}\R^n$ 
be the differential of $\varphi$; then we have 
\[ T_p\varphi= U(D_p\varphi),\]
where for a linear map $L:V\to W$ between vector spaces we denoted by 
$U(L):U(V)\to U(W)$ the map induced on  the unit spheres, namely 
\[ U(L)v= \frac{L(v)}{\parallel L(v)\parallel}.\]

As the unit tangent spread $UT_pA$ is a subset of the unit sphere, it inherits 
the spherical geometry and metric. In particular, it makes sense to consider 
the convex hull $Hull(UT_pA)\subset UT_p\R^n$ in the sphere. 

Although tangent spreads to product Cantor sets might depend on the 
particular factors, their convex hulls have a simple description. 
Let $C=C_1\times C_2\times \cdots \times C_n\subset \R^n$ be a product of Cantor sets 
$C_i\subset \R$.  The  usual cubical complex underlying the  
$n$-dimensional cube $[0,1]^n$ will be denoted by  $\Box^n$. 
Let then denote by $Lk(p)$ the spherical link  of $p\in \Box^n$. 
If $p$ belongs to a $k$-dimensional cube but not to a $k+1$-dimensional cube 
of $\Box^n$ then $Lk(p)$ is isometric to the link  $\mathcal L_{k,n}$ of the origin in $\R^k\times \R_+^{n-k}$. 
Thus there are precisely $n+1$ different isometry types of links of points.  

Now a direct inspection shows that  for each $p\in C$ there exists some $k$ so that 
the convex hull $Hull(UT_pA)$ is isometric to $\mathcal L_{k,n}$.

When the diffeomorphism  $\varphi: (\R^n,C)\to (\R^n,C)$ is also 
{\em conformal}, then the tangent maps are isometries 
between the unit tangent spreads,  
because the spherical distance is given by angles between 
the corresponding vectors. However this is not true for general 
diffeomorphisms. 

Nevertheless the spherical links 
$\mathcal L_{k,n}$ are quite particular. There exist $n+k$
vectors along the coordinates axes which are extremal points 
of $UT_pC$, such that  their convex hull is $Hull(UT_pC)$, so isometric to  
$\mathcal L_{k,n}$. These are vectors of the form $e_i, -e_i, e_j$, where 
$e_i$ correspond to the coordinates axes in  $\R^k$ and $e_j$ 
to those in $\R^{n-k}$. Now, any diffeomorphism  $\varphi: (\R^n,C)\to (\R^n,C)$ should send an unit tangent spread of type $\mathcal L_{k,n}$ into 
one of the same type, since  $\mathcal L_{k,n}$ is not affinely equivalent 
to $\mathcal L_{k',n}$, for $k\neq k'$. 
Moreover, the extremal vectors are sent into extremal vectors 
of the same type.

Let further $\varphi\in \Diff^1(\R^n,C)$ such that 
$\| D_a\varphi-\mathbf 1\| \leq \varepsilon$ for all $a\in C$.  
Assume now that the unit tangent spread 
$UT_aC$ is isometric to  $\mathcal L_{0,n}$,
namely it is of {\em corner} type. In this case 
$U(D_a\varphi)$ should permute the 
$n$  coordinate vectors, which are the extremal 
vectors of $\mathcal L_{0,n}$. Therefore either 
$U(D_a\varphi)=\mathbf 1$, or else 
\[  \| U(D_a\varphi)-\mathbf 1\| \geq \sqrt{2},\]
which yields 
\[  \| D_a\varphi-\mathbf 1\| \geq \sqrt{2}.\]
In other words, taking $\varepsilon < \sqrt{2}$ any diffeomorphism 
$\varphi$ as above should satisfy $U(D_a\varphi)=\mathbf 1$.
Now, if $\varphi$ is of class $\mathcal C^1$ then 
$U(D_a\varphi)$ is continuous. Since the set of corner 
points is dense in $C$ we derive  $U(D_a\varphi)=\mathbf 1$, for any $a\in C$.  
This is the same as saying that for any $a\in C$ the linear map 
$D_a\varphi$ is represented by  a diagonal matrix, with respect to the standard 
coordinate system of $\R^n$.

\begin{proposition}\label{stabilizers}
Let $a\in C$ be a corner point. The map 
$\chi: \diff^1_{\R^n, a}(C)\to \left(\R^*\right)^n$, which associates to the germ $\varphi$ the 
 eigenvalues of $D_a\varphi$ is an isomorphism onto a discrete subgroup 
of $ \left(\R^*\right)^n$. 
\end{proposition}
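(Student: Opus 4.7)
The strategy is to mirror the proof of Lemma \ref{l:stabilizer}, with scalars replaced by diagonal matrices. I will establish in sequence that $\chi$ is a well-defined group homomorphism, that its image is discrete in $(\R^*)^n$, and that it is injective.

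That $\chi$ is a well-defined homomorphism follows at once from the analysis preceding the proposition, which shows that at a corner point $a\in C$ the matrix $D_a\varphi$ permutes the positive coordinate unit vectors (the extremal vectors of the tangent spread $\mathcal L_{0,n}$) up to positive scalar factors; after passing to the finite-index subgroup on which this permutation is trivial, $D_a\varphi$ is diagonal and its eigenvalues are its diagonal entries in $\R^*$. The chain rule $D_a(\varphi\circ\psi)=D_a\varphi\cdot D_a\psi$ then makes $\chi$ a group homomorphism.

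For discreteness, we prove the higher-dimensional analogue of Lemma \ref{nonflat}: if $\chi(\varphi)=(\lambda_1,\dots,\lambda_n)$ and $\lambda_i\neq 1$, then $|\lambda_i-1|\ge\sigma_i$, where $\sigma_i$ is the sparseness constant of $C_i$. Since $a$ is a corner point, the axis slice $\{a+te_i:t\in C_i^+\}$ is contained in $C$ near $a$, where $C_i^+\subset C_i$ is a sparse Cantor set with $0$ as its extremal point. The map $g_i:\R\to\R$ defined by $g_i(t)=\pi_i(\varphi(a+te_i))-a_i$ is $\mathcal C^1$ with $g_i(0)=0$, $g_i'(0)=\lambda_i$, and (using that $D_a\varphi$ is diagonal together with the product structure of $C$ near $a$) sends $C_i^+$ into itself. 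The sparseness/mean value argument of Lemma \ref{nonflat} applied to $g_i$ then yields $|\lambda_i-1|\ge\sigma_i$, so the image of $\chi$ is discrete.

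For injectivity, suppose $D_a\varphi=\mathbf{1}$. Each axis-slice map $g_i$ above is now $1$-flat at $0$ and preserves $C_i^+$, so the forward-iteration argument in the proof of Lemma \ref{flat} (convergence to a fixed point followed by a contradictory jump across a sparseness gap) shows that $g_i$ fixes $C_i^+$ pointwise near $0$; hence $\varphi$ fixes each coordinate axis slice of $C$ through $a$. To conclude that $\varphi|_C$ is identity near $a$, one observes that at every point $a'$ on such a slice sufficiently close to $a$, the differential $D_{a'}\varphi$ is diagonal (by continuity of $U\circ D\varphi$ and density of corner points in $C$) and close to $\mathbf{1}$, hence equal to $\mathbf{1}$ by the discreteness statement just proven; iterating the axis-slice argument centered at each such $a'$ propagates the identity along the product structure $C=a+\prod_i C_i^+$, giving $\varphi=\mathrm{id}$ on all of $C$ near $a$. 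The hard part is precisely this final propagation: because $\varphi$ only preserves $C$ set-theoretically and need not respect the product decomposition, the axis-slice fixation at one corner point must be combined carefully with the rigidity of $D\varphi$ at nearby corner points, leveraging the fact that tangent spreads of product Cantor sets are (up to isometry) among the standard links $\mathcal L_{k,n}$.
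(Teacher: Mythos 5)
There is a genuine gap in the injectivity half of your argument, at the step where you pass from ``$g_i$ fixes $C_i^+$ pointwise near $0$'' to ``$\varphi$ fixes each coordinate axis slice of $C$ through $a$.'' The identity $g_i(t)=t$ for $t\in C_i^+$ only says that the $i$-th \emph{coordinate} of $\varphi(a+te_i)$ equals $a_i+t$; it says nothing about the transverse coordinates $\varphi(a+te_i)_j$ for $j\neq i$, which a priori can wander inside $C_j$ (continuity of $\varphi$ and the vanishing of the off-diagonal entries of $D_a\varphi$ do not force them to be constant, since $C_i^+$ and $C_j$ are totally disconnected). Your subsequent ``propagation'' inherits the same defect: at each re-centering you only ever control one coordinate of the image of a slice, so you never actually establish that any point of $C$ other than $a$ is fixed, and hence never earn the right to re-center the slice argument at a new base point. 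This transverse-displacement problem is exactly what the paper's proof spends its second half on: writing $\varphi(y,0)=(y,u(y))$ on a face and ruling out $u>0$ by a combination of Jordan separation for $\varphi(\partial V)$, the surjectivity of $\varphi$ on $C$, and a Lipschitz bound on $\varphi^{-1}$ of the form $\|D_x\varphi^{-1}\|<\tfrac{1}{1-\sigma}$ --- none of which appears in your sketch, and all of which is needed.

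Two further remarks. First, the overall architecture differs from the paper's: the paper argues by induction on $n$, replacing $\varphi$ by the projected maps $\varphi_j=\pi_j\circ\varphi$ on the hyperplane faces $H_j$ (which requires the nontrivial Lemmas \ref{injectivity} and \ref{surj} to show $\varphi_j$ is an injective germ preserving $C\cap H_j$), whereas you reduce directly to one-dimensional axis slices. Your discreteness argument along slices is essentially sound and is arguably more economical than the paper's route, but even there you should be careful: the slice map $g_i$ is only \emph{forward}-invariant on $C_i^+$ and is not the germ of an element of $\Diff^1_0(\R,C_i^+)$, so Lemma \ref{nonflat} for the case $\lambda_i<1$ (and the two-sided orbit $\{\varphi^k(v_j)\}_{k\in\Z}$ used in the proof of Lemma \ref{flat}) cannot be invoked verbatim; you must either rerun those proofs using only forward iterates or substitute the slice map of $\varphi^{-1}$, which is not $g_i^{-1}$.
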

\begin{proof}
Let $\{x_1,x_2,\ldots,x_n\}$ be the standard coordinates functions  on $\R^n$ and 
$\pi_j:\R^n\to\R^{n-1}$ denote the projection onto the hyperplane $H_j=\{x_j=0\}$.  
For the sake of simplicity we assume that $a=(0,0,\ldots,0)$, and that the 
convex hull of the unit tangent spread is the union   
of the sets  $H_j^+=H_j\cap \{x_i\geq 0, i=1,\ldots,n\}$. 
We will use induction on $n$. The claim was proved in Lemma \ref{l:stabilizer} for $n=1$. 
Assume it holds for all dimensions at most $n-1$. 

Let $\varphi\in \Diff^1(\R^n,C)$ such that $\varphi(a)=a$. 
Assume that $ \| D_x\varphi-\mathbf 1\| < \frac{1}{2}\sigma <\frac{1}{2}$  for all $x$ in a neighborhood $V$ of $a$ in $\R^n$. 
We will prove that $\varphi\Bigr|_{C}$ is  a trivial germ at $a$. This shows that the image of $\chi$ is a 
discrete subgroup of $\left(\R^*\right)^n$ and the kernel of $\chi$ is trivial.

Consider the maps $\varphi_j: H_j\to H_j$ given by 
$\varphi_j(x)=\pi_j\circ\varphi(x)$. 
The determinant of $D_a\varphi_j$ is the product of all eigenvalues of $D_a\varphi$ but 
the $j$-th eigenvalue, and hence it is non-zero. Moreover, we have 
  $ \| D_a\varphi_j-\mathbf 1\| <\frac{1}{2}\sigma$.  
  
  We claim that  
\begin{lemma}\label{injectivity}
The map $\varphi_j:H_j\cap V\to H_j$ is injective. 
\end{lemma}
\begin{proof}
  Assume the contrary, namely that there exist two points $p,q\in H_j\cap V$ such that 
  $\pi_j(\varphi(p))=\pi_j(\varphi(q))$.   
  Consider the first non-trivial case $n=2$, when $H_j^+$ are half-lines issued from $a$. 
The mean value theorem and the previous equality prove that there exists 
some $\xi\in H_j^+\cap V$   between $p$ and $q$ so that 
$(\pi_j\circ\varphi)'(\xi)=0$. This amounts to the fact that the image of $D_\xi \varphi$ is contained in the 
kernel of $D_{\varphi(\xi)}\pi_j$, namely that 
\[ \langle D_{\xi} \varphi (v_j), v_j\rangle =0, \]
where $v_j$ is a unit tangent vector to $H_j^+$ at $\xi$. 
We derive $ \| D_{\xi}\varphi_j-\mathbf 1\| \geq 1$, contradicting our assumptions. 

  In the general case $n>2$ we will use a trick to reduce  ourselves to $n=2$, 
  because we lack a multidimensional 
  mean value theorem. Let $P$ a generic affine 2-dimensional half-plane 
  whose boundary line passes through $p$ and $q$.
 We can find arbitrarily small 
$\mathcal C^1$-isotopy deformations  $\psi$ of $\varphi_j$  so that $\psi(H_j)$ is transversal to $P$ and 
  $ \| D_a\psi-\mathbf 1\| < \sigma$.  It follows that 
  $\psi(H_j)\cap P$ is a 1-dimensional manifold $Z$  with boundary containing both $p$ and $q$.
  Now either  there exist two distinct points of   the boundary $\partial Z$ 
  joined by an arc within $Z$, or else there is an arc of $Z$ issued from $p$ which returns to $p$, 
  contradicting the transversality of the intersection $\psi(H_j)\cap P$. 
  In any case  the mean value argument above  shows 
  that  it should exist a point $\psi(\xi)$ of $Z$ for which the tangent vector $v$ is orthogonal  to $H_j$. 
We can write $v= D_{\xi} \psi (w)$, for some  tangent vector $w\in H_j$ at $\xi$. It follows that  
  \[ \langle D_{\xi} \psi (w), w\rangle =0, \]
 which implies 
    $ \| D_{\xi}\psi-\mathbf 1\| \geq 1$, contradicting our assumptions. 
    
\end{proof}
It follows that $\varphi_j:H_j\cap V\to H_j$ is an injective map of maximal rank in a neighborhood $V$ of $a$, and hence a diffeomorphism on its image. The projection  $\pi_j$ sends $C$ into $C\cap H_j$, so that 
\[ \varphi_j(C\cap H_j\cap V)\subset C\cap \varphi(H_j\cap V)\subset C\cap H_j.\] 

Our aim is to use the induction hypothesis for $\varphi_j$. In order to do that we need to show that 
the class of $\varphi_j$ defines indeed an element of $\diff^1_{\R^{n-1},a}(C)$, where 
we identified $H_j$ with $\R^{n-1}$.

We assume  from now on that the neighborhood $V$ is a parallelepiped, all whose vertices being corner points.
Its boundary $\partial V$ will then consist of the union of the faces $V_j=\partial V\cap H_j^+$ with their respective 
parallel faces $V_j'$. The parallelepiped $V$ is surrounded by gaps, whose smaller width is some $\delta>0$. 
Let $V^{\delta}$ be the $\delta$-neighborhood of $V$.   
If $\varphi$ is Lipschitz with Lipschitz constant $1+\varepsilon$ and 
\[ (1+\varepsilon) l_i < \delta +l_i\] 
where $l_i$ are the edges lengths of $V$ then the image $\varphi(V)$ is contained in $V^{\delta}$, 
so that $\varphi_j(V_j)\subset V^{\delta}\cap H_j$. 

Further $\varphi_j(\partial V_j)$ bounds $\varphi_j(V_j)$ and thus there are no points of $C\cap H_j$ 
accumulating on  $\varphi_j(V_j)$, as their unit tangent spread cannot be of the type $\mathcal L_{n-1,n-1}$. 
Thus $C-\varphi_j(V_j)$ is a closed subset of $C$ and hence its distance to $\varphi_j(V_j)$ is 
strictly positive. There exists then an open set 
$U\subset V^{\delta}$ which contains $\varphi_j(V_j)$ such that  
$U\cap (C-\varphi_j(V_j))=\emptyset$. It follows that there exists 
an extension of $\varphi_j$ to a diffeomorphism $\Phi_j$ of $(H_j,C)$  which is identity outside $U$, 
and hence on $(V_{\delta}\cap H_j)\cup (C- \varphi(V_j))$.
 
It only remains to check that $\Phi_j^{-1}(C)$ is also contained in $C$, as needed 
for  $\Phi_j\in \Diff^1(\R^{n-1},C)$.  This follows from the following:

\begin{lemma}\label{surj}
The map $\varphi_j$ has the property   
\[ \varphi_j(C\cap V_j)= C\cap \varphi(V_j).\] 
\end{lemma}
\begin{proof}
Assume that there exists some point $p$ in $\varphi(V_j)\cap C$ which does not belong to $V_j$. 
Then the line issued from $p$ which is orthogonal to $V_j$ intersects 
$\varphi(V_j)$ only once, from Lemma \ref{injectivity}. On the other hand there are points of $C$ 
on this line, as $C$ is a product and $p\not\in V_j$. By Jordan's theorem there exist 
points of $C$ which belong to different components of $R^n-\varphi(\partial V)$ which contradicts 
the fact that $\varphi$ is surjective on $C$.  

Thus $\varphi(C\cap V_j)\subset C\cap V_j$. The same argument for $\varphi^{-1}$ yields the opposite inclusion and hence 
$\varphi(C\cap V_j)=C\cap V_j$. 
Our claim follows. 
\end{proof}

Lemma \ref{surj} tells us that $\varphi_j$ defines a germ in $\diff^1_{H_j,a}(C\cap H_j)$, namely both 
$\varphi_j$ and $\varphi_j^{-1}$ sends $C\cap H_j$ into itself. 
By the induction hypothesis $\varphi_j\Bigr|_{C\cap H_j}$ must be identity in a neighborhood of $a$ within $H_j$.

Notice that this implies already that $D_a\varphi=\mathbf 1$, and hence establishing the first claim of Proposition \ref{stabilizers}. 

For the second claim we consider the distance $d(C-V, V)=\mu >0$, as $V$ is surrounded by gaps. We suppose further that 

\[ \|D_x\varphi -\mathbf 1\| < \min\left(\frac{\sigma}{2}, \frac{\mu}{1+\sigma}\right).\]

We know that $\varphi(y,0)=(y, u(y))$, for $y\in C\cap V\cap H_n$ and some function 
$u\geq 0$. The next step is to show that $u\Bigr|_{C\cap V\cap H_n}=0$. 

Assume that there exists some $x\in C\cap V\cap H_n$ so that $u(x) >0$. 
Observe that $u(x)\in C_n$, since $\varphi(C)\subset C$.  Since points of $C_n$ which are 
not endpoints are dense in $C_n$ there should exist 
$x\in C$ for which $u(x)$ is not an endpoint of $C_n$. Set $z=(x,u(x))\in C$. 

Then for each $\nu>0$ there exist points $z_+, z_-\in C$ with $\pi_n(z_+)=\pi_n(z_-)=x$, 
so that the distances $d(z_+,z),d(z_-,z) <\nu$. 

Observe that the segment $z_+z_-$ intersects just once $\varphi(H_n^+)$, namely at $z$. 
One might expect to use Jordan's theorem in order to derive that $z_+\in C$ and $z_-\in C$ could not belong to the same 
connected component of $\varphi(\partial V)$.  This is not exactly true, as the segment $z_+z_-$ could possibly intersect 
other sheets like $\varphi(H_j^+)$ which are part of $\varphi(\partial V)$. 

Set $r$ for the distance between $x\in H_n^+$ and the union of the other $2n-1$ faces 
of $\partial V$. By the induction hypothesis 
we can assume that $r>0$.  Choose now $\nu$ so that 
$\nu < \min((1-\sigma)r/2, \mu(1-\sigma)/2)$. 

Suppose that there exist $x_+,x_-\in C\cap V$ such that $\varphi(x_+)=z_+$ and $\varphi(x_-)=z_-$. 
By Jordan's theorem the segment $z_+z_-$ intersects at least once 
$\varphi(\partial V-H_n^+)$, say in a point $\tilde{z}=\varphi(\tilde{x})$. 

We have then $ d(x, \tilde{x}) \geq r$, while 
\[ d(\varphi(x),\varphi(\tilde{x}))\leq d(z_+,z_-)\leq 2\nu. \]

On the other hand the $\mathcal C^1$-diffeomorphism $\varphi^{-1}$ is Lipschitz with 
Lipschitz constant bounded by $\sup_{x\in V}\|D_x\varphi^{-1}\| $.
Now, by standard functional calculus we have: 
\[ \|D_x\varphi^{-1}\| \leq \sum_{k=0}^\infty \|\mathbf 1- D_x\varphi\|^k<\frac{1}{1-\sigma}.\]
 Therefore the Lipschitz constant  of $\varphi^{-1}$ is bounded by above by 
 $\frac{1}{1-\sigma}$ so that 
 \[ d(x, \tilde{x}) \leq \frac{1}{1-\sigma} d(\varphi(x),\varphi(\tilde{x}))\leq \frac{2\nu}{1-\sigma}.\]
 This contradicts our choice of $\nu$. 
 
Furthermore if  one of $x_+,x_-$, say $x_+$ belongs to $C-V$ then we have 
$d(x,x_+)\geq \mu$  while 
\[ d(\varphi(x),\varphi(x_+))\leq \nu \]
and the argument above still leads to a contradiction.

This shows that $\varphi$ cannot be surjective on $C$. 
On the other hand a diffeomorphism of $\R^n$ which preserves $C$  
restricts to a bijection on $C$. If it were not surjective then its inverse would send 
points of $C$ outside.  

In particular $u(x)\Bigr|_{C\cap H_n^+}=0$ and so 
$\varphi\Bigr|_{C\cap H_n^+}$ is identity. 
The same proof shows that $\varphi\Bigr|_{C\cap H_j^+}$ is identity, for all $j$. 

By using the same argument when $a$  runs over the  points of $V\cap C\cap \cup_{j=1}^n H_j^+$ we derive that 
$\varphi\Bigr|_{C\cap V}$ is identity, as claimed. 
\end{proof}

{\em End of the proof of theorem \ref{products}}. 
The proof is by induction on $n$. For $n=1$ this was already proved above. 
Let  $V$ denote now the smallest parallelepiped containing $C$, in order to match previous notations and constructions.  
Suppose that $\varphi\in \Diff^1(\R^n,C)$ is such that 
$\|D_x\varphi -\mathbf 1\| < \varepsilon$, for all $x\in V^{\delta}$.
Then $\varphi(\partial V)$ surrounds $C$ and the proof of Lemma \ref{surj} gives us 
$\varphi(C\cap \partial V)=C\cap \partial V$. Moreover, each $\varphi_j$ preserves 
the associated face $V_j$. By the induction hypothesis  $\varphi_j$ is the identity. It follows that 
$\varphi\Bigr|_{C\cap \partial V}$ is the identity. We can therefore use Proposition \ref{stabilizers} 
to derive that around every corner point of $C\cap \partial V$ the map $\varphi\Bigr|_{C}$ is identity.
The same argument works for all corner points of $V$.

\begin{remark}\label{stab2}
If $C=C_{\lambda}^n$, then 
$\diff^{1,+}_a(C_{\lambda})$ 
is isomorphic to $\Z^{r(a)}$, where 
$r(a)$ is the number of coordinates of $a$ which are 
$\lambda$-rational (compare with \cite{BL}). 
\end{remark}

\section{Diffeomorphisms groups of specific Cantor sets}

\subsection{Proof of Theorem \ref{thompIFS}}\label{thompIFSsec}
Observe first that $C_{\Phi}$ is a Cantor set. 
Indeed the contractivity assumption implies that 
an infinite intersection  
$\lim_{p\to \infty}\phi_{i_1}\phi_{i_2}\cdots\phi_{i_p}(M)$ cannot contain 
but a single point.  Two such points which are distinct are separated 
by some smoothly embedded sphere, which is the image of 
$\partial M$ by an element of the semigroup generated by $\Phi$, so that the set $C_{\Phi}$ is totally disconnected. The perfectness follows 
the same way.

We will draw a rooted $(n+2)$-valent tree $\mathcal T$  
with edges directed downwards. 
When $M=[0,1]$ there is an extra structure on  $\mathcal T$, as all edges 
issued from a vertex are enumerated from left to the right. 

There is a  one-to-one correspondence between 
the points of the boundary at infinity of the tree and the points 
of the Cantor set $C=C_{\Phi}$ associated to the  
invertible IFS $(\Phi, M)$.
To  each point  $\xi\in C$ we can assign an infinite 
sequence $I=i_1 i_2\ldots i_p\ldots$, so that $\xi=\xi(I)$ where 
we denoted:
\[
\xi(I)=\bigcap_{p=1}^{\infty} \phi_{i_1}\phi_{i_2}\cdots\phi_{i_p}(M).
\]
The vertices of the tree are endowed with a compatible labeling by means 
of finite multi-indices $I$, where the root has associated the empty index
and the vertex $v_I$ is the one reached after traveling along the 
edges labeled $i_1,i_2,\ldots,i_p$. We also put 
\[ \phi_{I}(x)=\phi_{i_1}\phi_{i_2}\cdots\phi_{i_p}(x)\]
for finite $I$. This extends obviously to the case of infinite 
multi-indices $I$.

We further need to introduce a 
special class of germs, as follows:

\begin{definition}\label{germ}
The standard germ associated to the finite multi-indices 
$I$ and $J$ is the diffeomorphism 
$\phi_{I/J}: \phi_I(M)\to \phi_J(M)$ given by 
\begin{equation}
\phi_{I/J}(\phi_I(x))=\phi_J(x). 
\end{equation}
\end{definition}
Standard germs preserve the Cantor set $C$ as germs, namely 
$\phi_{I/J}(C\cap \phi_I(M))\subset C\cap  \phi_J(M)$. 
In fact if $S$ is an infinite multi-index then 
\[ \phi_{I/J}(\xi(IS))=\xi(JS).\]
Graphically we can realize this map as a partial  
isomorphism of the tree $\mathcal T$  which maps 
the subtree hanging at $v_I$ onto the subtree hanging at $v_J$. 

Consider a pair $(t_1,t_2)$ of finite labeled subtrees 
of the same degree of $\mathcal T$ both 
containing the root, and whose leaves are enumerated 
$v_{I_1},v_{I_2},\ldots, v_{I_p}$ and 
$v_{J_1},v_{J_2},\ldots, v_{J_p}$. 

\begin{lemma}
Assume that $\phi_j$ are orientation preserving diffeomorphisms of $M$. Then the map 
\begin{equation}
\phi(x)= \phi_{I_k/J_k}(x), \quad {\rm if }\quad x\in \phi_{I_k}(C)
\end{equation}
defines an element $\phi_{(t_1,t_2)}\in \diff^{1,+}(C)$. 
\end{lemma}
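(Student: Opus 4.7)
The plan is to verify that the piecewise formula assembles into a well-defined bijection of $C$ that is induced from some $\mathcal{C}^1$ orientation-preserving diffeomorphism of $M$, thereby giving a bona fide element of $\diff^{1,+}(C)$.

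First I would check that $\{\phi_{I_k}(M)\}_{k=1}^p$ is a pairwise disjoint family of compact submanifolds of $M$. This propagates by induction on the depth of the tree from the strict attractive basin axioms: at the first level $\phi_i(M)\cap \phi_j(M)=\emptyset$ for $i\ne j$, and the inclusion $\phi_j(M)\subset \mathrm{int}(M)$ pushes this disjointness to every subsequent level via the standard argument that $\phi_{I}(M)\cap \phi_{J}(M)=\emptyset$ whenever $I,J$ diverge at some coordinate. Because the leaves $v_{I_1},\dots,v_{I_p}$ exhaust the frontier of $t_1$ in $\mathcal T$, every infinite address of $\mathcal T$ begins with exactly one $I_k$, so $C=\bigsqcup_k \phi_{I_k}(C)$ is a partition of $C$ into clopen subsets; the same is true for $t_2$. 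Since each $\phi_{I_k/J_k}=\phi_{J_k}\circ \phi_{I_k}^{-1}$ is an orientation-preserving $\mathcal{C}^1$ diffeomorphism of $\phi_{I_k}(M)$ onto $\phi_{J_k}(M)$ (as a composition of such), and sends $\phi_{I_k}(C)$ bijectively onto $\phi_{J_k}(C)$ by invariance of $C$, the assembled map $\phi: C\to C$ is a well-defined bijection.

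Next I would realize $\phi$ as the restriction of a global diffeomorphism. On $U_1:=\bigsqcup_k \phi_{I_k}(M)$ there is a canonical lift $\widetilde\Phi|_{\phi_{I_k}(M)}=\phi_{J_k}\circ \phi_{I_k}^{-1}$, an orientation-preserving $\mathcal{C}^1$ diffeomorphism onto $U_2:=\bigsqcup_k \phi_{J_k}(M)$. The plan is to extend $\widetilde\Phi$ to a $\mathcal{C}^1$ orientation-preserving diffeomorphism of $M$ by choosing any diffeomorphism between the compact manifolds with boundary $M\setminus \mathrm{int}(U_1)$ and $M\setminus \mathrm{int}(U_2)$ that agrees with the prescribed map on $\partial U_1 \sqcup \partial M$. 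Existence follows from the isotopy extension theorem applied to the two embeddings of the finite disjoint union of submanifolds $\bigsqcup_k \phi_{I_k}(M),\bigsqcup_k \phi_{J_k}(M) \hookrightarrow M$, combined with a collar smoothing so that the 1-jets match across the interface.

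The main obstacle I would expect is this last extension step when the induced combinatorics on the leaves do not respect the natural planar order (as will happen for the $V$-type maps in dimension $d\ge 2$): one must check that an orientation-preserving $\mathcal{C}^1$ diffeomorphism of $M\setminus \mathrm{int}(U_1)$ onto $M\setminus \mathrm{int}(U_2)$ with the prescribed boundary behavior truly exists. This is where orientability of $M$ and the orientation-preserving hypothesis on each $\phi_j$ enter crucially: the boundary components of $\partial U_1$ and $\partial U_2$ carry compatible induced orientations, so any bijection between them respecting these orientations extends to an orientation-preserving homeomorphism of the complements that can be smoothed to a $\mathcal{C}^1$ diffeomorphism matching the given 1-jets on the collar. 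Once $\widetilde\Phi$ is in hand, it preserves $C$ because it sends each $\phi_{I_k}(C)$ onto $\phi_{J_k}(C)$ and these clopen families partition $C$; it is orientation-preserving and of class $\mathcal{C}^1$, so its restriction defines an element $\phi_{(t_1,t_2)}\in \diff^{1,+}(C)$ agreeing with $\phi$ on $C$, which completes the argument.
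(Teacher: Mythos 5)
Your overall strategy coincides with the paper's: establish that $C=\bigsqcup_k\phi_{I_k}(C)$ is a clopen partition, observe that each $\phi_{I_k/J_k}$ is an orientation-preserving $\mathcal C^1$ diffeomorphism between the corresponding pieces, and then extend the assembled map across the complement of $\bigsqcup_k\phi_{I_k}(M)$ to a global diffeomorphism of $M$ preserving $C$. The first two steps are fine and match the paper's recurrence on the number of leaves.

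The gap is in the extension step. You justify it by saying that, since the boundary components of $U_1$ and $U_2$ carry compatible induced orientations, any bijection between them respecting these orientations extends to an orientation-preserving homeomorphism of the complements, and you invoke the isotopy extension theorem for the two embeddings $\bigsqcup_k\phi_{I_k}(M)\hookrightarrow M$ and $\bigsqcup_k\phi_{J_k}(M)\hookrightarrow M$. There are two problems. First, the isotopy extension theorem extends a \emph{given} isotopy of submanifolds to an ambient one; it does not supply the isotopy between the two configurations, which is exactly what is at stake. Second, the claim that compatible boundary orientations suffice is false in general: if $M$ were a $3$-dimensional solid torus and the pieces $\phi_j(M)$ were linked inside it, no such extension of the complements need exist --- this is precisely the paper's remark after Theorem \ref{thompIFS} that the conclusion fails for non-ball basins, and the source of rigid wild Cantor sets with trivial $\diff^1_M(C)$. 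What actually makes the step work is the standing hypothesis that $M$ is diffeomorphic to a ball, so each $\phi_{I_k}(M)$ is an embedded ball and the complement is a standard holed ball; the paper then isotopes the restriction of $\phi$ on each boundary sphere to the identity (using that it is orientation preserving and extends over the ball) and fills in by a smoothed conical extension. Your argument needs to invoke the ball hypothesis and the standardness of finite disjoint configurations of embedded balls in a ball up to ambient isotopy (the disc theorem) at this point, rather than orientability alone.
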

\begin{proof}
We know that $C=\cup_{i=0}^n \phi_i(C)$, since $C$ is the attractor of $\Phi$. By recurrence on the number of 
leaves we show that 
\[ C= \cup_{i=0}^n \phi_{I_i}(C)\]
for any finite subtree $t$ of $\mathcal T$ containing the root and having 
leaves $v_{I_i}$, $i=0,n$.
Now  $\phi$ is a smooth orientation preserving map 
defined on $\cup_{i=0}^n \phi_{I_i}(M)$, and so its domain 
of definition contains $C$. 
 
When the dimension $d=1$, the complementary 
$M\setminus\cup_{i=0}^n \phi_{I_i}(M)$ is the union of 
finitely many intervals, which we call gaps and there exists 
by orientability assumption an extension of $\phi$ to  a diffeomorphism of 
$M=[0,1]$ sending gaps into gaps. 

When the dimension $d >1$, the complementary gap 
$M\setminus\cup_{i=0}^n \phi_{I_i}(M)$ is now connected and diffeomorphic 
to the standard disk with $(n+1)$ holes. Moreover, the restriction of $\phi$ to every sphere $\partial  \phi_{I_i}(M)$ is isotopic to identity since it is orientation preserving and it admits an extension to the ball. 
Taking a suitable smoothing at the singular vertex of the conical 
extension of $\phi\Bigr|_{\cup_{i=0}^n \phi_{I_i}(\partial M)}$ we obtain 
an extension of $\phi$ to a  diffeomorphism of  the ball $M$, 
possibly non-trivial on $\partial M$. 

This extension preserves $C$ invariant as gaps 
are disjoint from $C$ and therefore defines an element 
$\phi_{(t_1,t_2)}\in \diff^{1,+}(C)$. 
\end{proof}

{\em End of the proof of Theorem  \ref{thompIFS}}. 
Let us stabilize the pair of trees $(t_1,t_2)$ to a 
pair $(t_1',t_2')$, where $t_j'$ is obtained from $t_j$ by 
adding the first descendants at vertex $v_{I_s}$, for $j=1$ and 
$v_{J_s}$, when $j=2$. The new vertices come with a compatible 
labeling. Moreover, an orientation preserving diffeomorphism of $C$ 
induces a monotone map of the boundary of the tree, when $d=1$.   

By direct inspection using the explicit form of $\phi$ we find that: 
\[ \phi_{(t_1,t_2)}=\phi_{(t_1',t_2')}.\]
Thus the map which associates to the pair $(t_1,t_2)$ of labeled trees 
the element $\phi_{(t_1,t_2)}$ factors through a map  
$F_{n+1}\to \diff^{1,+}(C)$, for $d=1$, and 
$V_{n+1}\to \diff^{1,+}(C)$, for $d=2$, respectively. 
This is easily seen to be a homomorphism. 
When $I\neq J$ the map $\varphi_{I/J}\Bigr|_{C}$ is 
not identity since $\varphi_I(M)\cap \varphi_J(M)=\emptyset$. 
This proves that the homomorphism  defined above is injective, 
 thereby ending the proof of Theorem \ref{thompIFS}.

\begin{remark}
There is a more general setting in which we allow basins to have 
boundary fixed points.  We say that the compact 
submanifold $M$ is an {\em attractive basin} for $\Phi=(\phi_0,\phi_1,\ldots,\phi_n)$ if, 
for all $j\in\{0,1,\ldots,n\}$ we have: 
\begin{enumerate}
\item $\phi_j({\rm int}(M))\subset {\rm int}(M)$; 
\item ${\rm int}(\phi_j^{-1}(\phi_j(\partial M)\cap \partial M))\supset  
{\rm int}(\phi_j(\partial M)\cap \partial M)$; 
\item $\phi_i(M)\cap \phi_j(M)=\emptyset$, for any $i\neq j\in\{0,1,\ldots,n\}$; 
\item ${\rm int}(\phi_j(\partial M)\cap \partial M)\subset 
{\rm int}(\phi_j^{-1}(\phi_j(\partial M)\cap \partial M))$.
\end{enumerate}
Using similar arguments one can show that $\diff^{1}(C_{\Phi})$ contains $F_{n+1}$ whenever $\Phi$ has an attractive basin. 
\end{remark}

\begin{remark}
If the Cantor set $C$ is invertible, namely there exists an orientation reversing diffeomorphism $\phi$  
of $M$ preserving $C$, then we can replace the homeomorphisms $\phi_j$ which reverse the orientation by $\phi\circ\phi_j$.  However, there exist non invertible Cantor subsets, for instance the union of two copies $C_{\lambda}\cup (1+C_{\mu})$, 
for $\lambda\neq \mu$. 
\end{remark}

\subsection{Proof of Theorem \ref{thompgen} for $C=C_{\lambda}$}
Our strategy is to give first a detailed proof of Theorem \ref{thompgen} in the case when $C=C_{\lambda}$ and then to explain the necessary changes needed to achieve the general case in the next section. 
 
We first need the following:
\begin{lemma}\label{chi}
If $a$ is a left (or right) point of $C_{\lambda}$, then $\chi(\Diff^{1}_a)$ is the subgroup $\langle \lambda \rangle\subset \R^*$. 
\end{lemma}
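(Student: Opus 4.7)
The plan is to prove both inclusions $\langle \lambda\rangle \subseteq \chi(\Diff^1_a)$ and $\chi(\Diff^1_a)\subseteq \langle \lambda\rangle$, using the self-similar structure of $C_\lambda$. First I observe that a left point $a$ has an address eventually ending in $0^\infty$, so $a=\phi_J(0)$ for some finite multi-index $J=j_1\cdots j_n$; in particular $a$ is $\lambda$-rational, so by Remark \ref{stab}, $\diff^{1,+}_a(C_\lambda)=\Z$. By Remark \ref{decrease}, $\Diff^1_a=\Diff^{1,+}_a$ since $a$ is a left point. Hence, by Lemma \ref{l:stabilizer}, $\chi(\Diff^1_a)=\langle p\rangle$ for some $p$ with $|p-1|\geq \sigma$, and it remains to identify the generator.

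For $\langle \lambda\rangle\subseteq \chi(\Diff^1_a)$, I exhibit an explicit element $\psi_a\in\Diff^1_a$ with $\psi_a'(a)=\lambda$ via the embedding $F_2\hookrightarrow \diff^{1,+}(C_\lambda)$ provided by Theorem \ref{thompIFS}. I take a pair of labeled finite binary subtrees $(t_1,t_2)$ of $\mathcal T$, both containing the root and having the same number of leaves, such that the leaf $v_{J0}$ of $t_1$ is paired with the leaf $v_J$ of $t_2$ (the remaining leaves are paired off by any compatible bijection). By Definition \ref{germ} the associated standard germ on a neighborhood of $a$ is
\[
\phi_{J0/J}(x)=\phi_J\circ\phi_0^{-1}\circ\phi_{J}^{-1}(x)=\lambda(x-a)+a,
\]
whose derivative at $a$ is exactly $\lambda$. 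Thus $\chi(\phi_{(t_1,t_2)})=\lambda$, giving the inclusion.

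For $\chi(\Diff^1_a)\subseteq \langle \lambda\rangle$, I use the rigidity of the gap structure of $C_\lambda$: every complementary interval is the image of the initial gap $(1/\lambda,(\lambda-1)/\lambda)$ under some $\phi_I$, so every gap length is of the form $c_0\lambda^{-k}$ with $c_0=(\lambda-2)/\lambda$ fixed and $k\in\Z_{\ge 0}$. Let $\psi\in \Diff^1_a$ with $p=\psi'(a)$. Since $\psi$ fixes $a$ and preserves $C_\lambda$, in a sufficiently small neighborhood of $a$ the homeomorphism $\psi$ sends each complementary gap to a complementary gap. Choose a sequence of gaps $(\alpha_k,\beta_k)\subset (a,a+\varepsilon)$ with $\beta_k\to a$ and lengths $\beta_k-\alpha_k=c_0\lambda^{-n_k}$; then $\psi(\beta_k)-\psi(\alpha_k)=c_0\lambda^{-m_k}$ for some integers $m_k$. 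By the mean value theorem there exist $\xi_k\in(\alpha_k,\beta_k)$ with
\[
\lambda^{n_k-m_k}=\frac{\psi(\beta_k)-\psi(\alpha_k)}{\beta_k-\alpha_k}=\psi'(\xi_k)\to \psi'(a)=p.
\]
Since $\{\lambda^j:j\in\Z\}$ is a discrete subset of $\R_+^*$, the sequence $\lambda^{n_k-m_k}$ must be eventually constant and equal to $p$, so $p\in\langle\lambda\rangle$.

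The main obstacle is the first step: one must verify that a tree pair with the stated property can actually be constructed (adjusting the number of leaves on each side so that both trees have the same size, while preserving the pairing $v_{J0}\leftrightarrow v_J$) and confirm that the resulting element of $F_2$ has the claimed local affine form near $a$. The second step is essentially automatic given the already established discreteness of $\chi(\Diff^1_a)$ and the multiplicatively rigid gap spectrum of $C_\lambda$.
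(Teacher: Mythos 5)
Your proof is correct and follows essentially the same route as the paper's: the inclusion $\langle\lambda\rangle\subseteq\chi(\Diff^1_a)$ comes from the local self-similarity $x\mapsto a+\lambda(x-a)$ at a left point (the paper phrases this as multiplication by $\lambda$ near $0$ after reducing to $a=0$ by affine local homogeneity of left points), and the reverse inclusion comes from the fact that gap lengths of $C_\lambda$ have pairwise quotients in the discrete set $\langle\lambda\rangle$, combined with the mean value theorem and continuity of the derivative. The only comments worth making are that your appeal to Remark \ref{stab} is unnecessary and best avoided (that remark is downstream of this lemma rather than an input, and your two inclusions already determine $\chi(\Diff^1_a)$ without knowing in advance that it is cyclic), and that applying the gap argument directly to an arbitrary $\psi\in\Diff^1_a$ is in fact a slight streamlining of the paper's detour through a putative generator $\alpha$ with $\alpha^k=\lambda$.
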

\begin{proof}
Recall from Remark \ref{decrease} that $\Diff^{1}_a(C_{\lambda})=\Diff^{1,+}_a(C_{\lambda})$.  The set $L(C_{\lambda})$  of left points of $C_{\lambda}$ is affinely locally homogeneous, namely for any two left points $a$ and $b$ there exists an affine germ sending a neighborhood of $a$ in $C_{\lambda}$  into a neighborhood of $b$ in $C_{\lambda}$.  
Therefore it suffices to analyze $\Diff^{1,+}_0(C_{\lambda})$.  Moreover, $0$ is the minimal element of $C_{\lambda}$ and 
therefore it should be fixed by any element of $\Diff^{1,+}(C_{\lambda})$. 

Elements of $L(C_{\lambda})$ can be described explicitly, as: 
\begin{equation}
L(C_{\lambda})=\bigcup_{n=1}^{\infty} \{x\in [0,1]; x=\sum_{j=1}^n a_j\lambda^{-j}, \quad 
{\rm where} \quad a_j\in\{0,\lambda-1\}\}.
\end{equation}

Therefore there exists $\delta$ such that the 
multiplication by $\lambda\in\R^*$ 
sends $C_{\lambda}\cap \cN_{\delta}(0)$ into $C_{\lambda}$. 
This easily implies that $\chi(\Diff^{1,+}_a)$ contains 
the subgroup $\langle \lambda \rangle$. 

For the reverse inclusion we need a sharpening of Lemma \ref{nonflat}. 
Note first that the set of lengths of gaps 
in $C_{\lambda}$ is $\{(\lambda-2)\lambda^{-n}, n\in \Z_+\setminus\{0\}\}$. 
In particular, the quotients of the lengths of any two gaps 
belong to $\langle \lambda \rangle$.

Let $\alpha >1$ be a minimal element occurring in $\chi(\D^{1,+}_0(C_{\lambda}))\subset \R_+^*$. 
By Lemma \ref{l:stabilizer} there exists $k\in\Z_+$ such that $\lambda^{-1}=\alpha^k$.  Let 
$\varphi\in \Diff^{1,+}_0$ be such that $\varphi'(0)=\lambda^{-1/k}$. 

For every gap $I$, the image $J=\varphi(I)$ is another gap and the ratio $|J|/|I|$ is an element in $\langle \lambda\rangle$, 
hence of the form $\lambda^{i(I)}$, for some integer $i(I)$. Further, there is a point $x_I\in I$ for which $\varphi'(x_I)=\lambda^{i(I)}$. 
Letting $I_n$ be a sequence of gaps converging to the origin, we have that 
$\varphi'(x_{I_n})$ converges to $\varphi'(0)=\lambda^{-1/k}$. 
The sequence of integers $i(I_n)$ hence converges to $-\frac{1}{k}$, which forces $k=1$. 
%Consider a set of maximal gaps $(x_n,y_n)$ accumulating to $0$. This means that $(x_{n+1},y_{n+1})$ is a maximal gap 
%within $[0,x_n]$, for every $n$. 
%Then $(\varphi(x_n),\varphi(y_n))$ is also a maximal gap in $[0,\varphi(x_{n-1})]$, so that 
%\[ \frac{\varphi(x_n)-\varphi(y_n)}{x_n-y_n}\in \langle \lambda \rangle. \]
%On the other hand 
%\[ \lim_{n\to \infty} \frac{\varphi(x_n)-\varphi(y_n)}{x_n-y_n}=\varphi'(0)=\lambda^{-1/k}\]
%Thus $\lambda^{-1/k}\in \langle \lambda \rangle$, so that $k=1$ and hence $\alpha=\lambda^{-1}$. 
\end{proof}

We next observe that for each left point $a$ of $C_{\lambda}$ there exists 
a small neighborhood  $U_a$ of $a$ such that the affine map 
$\psi_a=a+\lambda(x-a)$ sends $U_a\cap C_{\lambda}$ into $C_{\lambda}$, defining 
therefore a germ in $\diff^{1,+}_a$. 
Then Lemmas \ref{chi} and \ref{l:stabilizer} imply 
together that $\diff^{1,+}_a$ is generated by 
$\psi_a=a+\lambda(x-a)$.

Let  $a$ and $b$ be two left  points of $C_{\lambda}$. Denote by $D(a,b)$ 
the set of germs at $a$ of classes of local diffeomorphisms $\varphi$ of $(\R,C_{\lambda})$ such that $\varphi(a)=b$. 
Then $D(a,b)$  is acted upon  transitively by $\diff^{1,+}_a$. Using an argument similar  
to the one from above concerning stabilizers, $D(a,b)$ consists of 
germs of maps of the form $\psi_{a,b,k}=b+\lambda^k(x-a)$, with $k\in\Z$.

Let  now $\varphi\in\Diff^{1,+}(\R,C_{\lambda})$ such 
that $\varphi(a)=b$. From above there exists $\delta >0$ such that 
$\varphi\Bigr|_{C_{\lambda}\cap \cN_{\delta}(a)}$ coincides with  $\psi_{a,b,k}\Bigr|_{C_{\lambda}\cap \cN_{\delta}(a)}$ and hence $\varphi'(a)\in \langle \lambda\rangle$. 
Therefore, for any left point $a\in C_{\lambda}$  we have $\varphi'(a)\in \langle \lambda\rangle$.
Now,  left points of $C_{\lambda}$ are dense in $C_{\lambda}$, $\varphi'$ 
is continuous  and $\langle \lambda\rangle$ has no other accumulation points 
in $\R^*$. It follows that  $\varphi'(a)\in \langle \lambda\rangle$, for any $a\in C_{\lambda}$ and 
any $\varphi\in \Diff^{1,+}(\R,C_{\lambda})$.  

For a given $\varphi\in\Diff^{1,+}(\R,C_{\lambda})$ 
its derivative $\varphi'$ is continuous on the whole 
interval $[0,1]$ and hence is bounded. Moreover,  the same argument 
for $\varphi^{-1}$ shows that $\varphi'$ is also bounded from below 
away from $0$, so that $\varphi'\Bigr|_{C_{\lambda}}$ can only take 
finitely many values of the form $\lambda^n$, $n\in \Z$.

The following is a key ingredient in the description of the group 
 $\diff^{1,+}(C_{\lambda})$:

\begin{lemma}\label{finiteness}
Let $\varphi\in\diff^{1,+}(C_{\lambda})$. There is a covering of $C_{\lambda}$ by a finite 
collection of disjoint  closed intervals $I_k$,  such that
$\varphi\Bigr|_{C_{\lambda}\cap I_k}$ is the restriction 
of an affine function to $I_k\cap C_{\lambda}$. Specifically, 
\begin{equation}
\varphi(x)=\varphi(c_k)+ \lambda^{j_k}(x-c_k), \qquad {\rm for }\quad x\in I_k\cap C_{\lambda}, 
\end{equation}
where $c_k$ is a left  point  of $C_{\lambda} \cap I_k$.
\end{lemma}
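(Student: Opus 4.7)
My plan is to refine the two facts established in the paragraphs just before the statement: (i) $\varphi'|_{C_{\lambda}}$ takes only finitely many values in $\langle\lambda\rangle$, and (ii) every left point $a\in C_{\lambda}$ has a neighbourhood on which $\varphi|_{C_{\lambda}}$ agrees with an affine map of slope $\varphi'(a)$. I will upgrade these into an \emph{exact} piecewise-affine description on finitely many standard cylinders. The new ingredient is the discreteness of the set $\{(\lambda-2)\lambda^{-m}:m\ge 1\}$ of gap lengths of $C_{\lambda}$, which will let me promote an approximate equality across each gap to an exact one.

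Since $x\mapsto\log_\lambda\varphi'(x)$ is continuous from $C_{\lambda}$ into the discrete set $\Z$, it is locally constant, so $C_{\lambda}$ decomposes as a finite clopen disjoint union $C_{\lambda}=\bigsqcup_jC^{(j)}$ with $\varphi'\equiv\lambda^{n_j}$ on $C^{(j)}$, and each $C^{(j)}$ is itself a finite disjoint union of basic clopen sets $\phi_I(C_{\lambda})$. Let $n_{\min}=\min_j n_j$, pick $\epsilon<\lambda^{n_{\min}-1}(\lambda-1)$ so that the open $\epsilon$-ball around any $\lambda^{n_j}$ in $\R$ avoids every other power of $\lambda$, and use uniform continuity of $\varphi'$ on $[0,1]$ to find $\delta>0$ with $|x-y|<\delta\Rightarrow|\varphi'(x)-\varphi'(y)|<\epsilon$. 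By subdividing the basic cylinders further, I may assume each sits inside one $C^{(j)}$ and has diameter $<\delta$. This produces a finite family $\{I_k\}$ of pairwise disjoint standard intervals; set $c_k$ to be the left endpoint of $I_k$ (which, being the image of $0$ under the relevant composition of $\phi_0,\phi_1$, is a left point of $C_{\lambda}$) and let $j_k$ be such that $C_{\lambda}\cap I_k\subset C^{(j_k)}$.

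The core of the argument is to check that $\varphi$ coincides on $C_{\lambda}\cap I_k$ with the affine map $L_k(x)=\varphi(c_k)+\lambda^{j_k}(x-c_k)$. Let $(\gamma,\eta)\subset I_k\setminus C_{\lambda}$ be a complementary gap; both endpoints lie in $C^{(j_k)}$ so $\varphi'(\gamma)=\varphi'(\eta)=\lambda^{j_k}$, and since $\eta-\gamma<\delta$ the choice of $\delta$ forces $|\varphi'(t)-\lambda^{j_k}|<\epsilon$ for all $t\in[\gamma,\eta]$. Integrating gives
\[
\bigl|(\varphi(\eta)-\varphi(\gamma))-\lambda^{j_k}(\eta-\gamma)\bigr|<\epsilon(\eta-\gamma).
\]
But $\varphi$ sends gaps of $C_{\lambda}$ to gaps of $C_{\lambda}$, so $\varphi(\eta)-\varphi(\gamma)$ has the form $(\lambda-2)\lambda^{-m'}$, while $\lambda^{j_k}(\eta-\gamma)=(\lambda-2)\lambda^{-(m-j_k)}$ is another such value; the nearest admissible neighbour of this target differs from it by at least $\lambda^{j_k-1}(\lambda-1)(\eta-\gamma)$, which exceeds $\epsilon(\eta-\gamma)$ by the choice of $\epsilon$. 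The inequality therefore forces the exact equality $\varphi(\eta)-\varphi(\gamma)=\lambda^{j_k}(\eta-\gamma)$.

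Finally, since $C_{\lambda}$ has Lebesgue measure zero (as $\lambda>2$), for any $x<y$ in $C_{\lambda}\cap I_k$
\[
\varphi(y)-\varphi(x)=\int_x^y\varphi'(t)\,dt=\sum_{(\gamma,\eta)}\bigl(\varphi(\eta)-\varphi(\gamma)\bigr)=\lambda^{j_k}\sum_{(\gamma,\eta)}(\eta-\gamma)=\lambda^{j_k}(y-x),
\]
where the sum runs over complementary gaps of $C_{\lambda}$ in $(x,y)$; taking $x=c_k$ gives $\varphi|_{C_{\lambda}\cap I_k}=L_k|_{C_{\lambda}\cap I_k}$. The main obstacle is the middle step: converting the merely approximate affine relation across each small gap into an exact identity. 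This conversion uses in an essential way that the gap lengths of $C_{\lambda}$ form the discrete geometric sequence $\{(\lambda-2)\lambda^{-m}\}$, which is exactly the specific self-similar structure of $C_{\lambda}$ rather than the bare $\sigma$-sparseness used in the earlier general results.
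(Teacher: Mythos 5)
Your proof is correct, but it reaches the conclusion by a genuinely different route than the paper. The paper's proof works pointwise: it starts from the fact (established via the analysis of the germ sets $D(a,b)$) that $\varphi$ is locally affine at every left point, and then, for an accumulation point $a_\infty$ of left points $a_n$, runs a maximality argument — it takes the largest interval $[a_n,b]$ on which the affine extension agrees with $\varphi$ on $C_\lambda$ and shows that if $b<a_\infty$ then $b$ is adjacent to a gap, the image gap has length within a factor $1+\varepsilon$ of the predicted one, and the discreteness of the ratios of gap lengths forces exact equality, so the affine map extends across the gap, contradicting maximality. You instead observe that $\varphi'\bigl|_{C_\lambda}$, taking values in the discrete set $\langle\lambda\rangle$, is locally constant on the compact set $C_\lambda$, cut $C_\lambda$ into finitely many small standard cylinders on which $\varphi'$ is a fixed power $\lambda^{j_k}$ up to $\epsilon$, and then integrate: since $C_\lambda$ has Lebesgue measure zero, $\varphi(y)-\varphi(x)=\int_x^y\varphi'$ is the sum of the increments over the complementary gaps, and each increment is pinned down exactly by the same discreteness of the gap lengths $\{(\lambda-2)\lambda^{-m}\}$. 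Both arguments hinge on the identical key fact — an approximate multiplicative relation between a gap and its image must be exact — but your version bypasses the germ/maximal-extension bookkeeping entirely and needs only the single input $\varphi'(C_\lambda)\subset\langle\lambda\rangle$ from the preceding paragraphs, which makes it shorter and cleaner for $C_\lambda$. What the paper's formulation buys in exchange is that its gap-by-gap propagation mechanism is the one that carries over (with the extensive combinatorial work on standard germs) to the general self-similar case of Lemma \ref{finitenessgen}, where the gaps come in several incommensurable sizes and the clean "sum over gaps, each contributes exactly $\lambda^{j_k}$ times its length" bookkeeping would require the same case analysis anyway. All the steps you use — gaps map to gaps, the minimum spacing $\lambda^{j_k-1}(\lambda-1)(\eta-\gamma)$ between admissible image-gap lengths, and the measure-zero decomposition of the integral — check out.
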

\begin{proof}
For  $c\in C_{\lambda}$ there is some $m\in \Z$ such that 
$\varphi'(c)=\lambda^{m}$. We want to prove that 
there exists an open neighborhood $U$ of $c$ such that: 
\begin{equation}
\varphi(x)=\varphi(c)+ \lambda^{j_k}(x-c), \qquad {\rm for }\quad x\in U\cap C_{\lambda}.
\end{equation}
Then such neighborhoods will cover $C_{\lambda}$ and we can  
extract a finite subcovering by clopen (closed and open) subsets with 
the same property. 

This claim is true for any left (and by similar arguments for right) end points $c$ of $C_{\lambda}$. It is then 
sufficient to prove that whenever we have a sequence 
of left points $a_n\to a_{\infty}$ contained in a closed interval 
$U\subset [0,1]$ and a $\mathcal C^1$-diffeomorphism   
$\varphi: U\to \varphi(U)\subset [0,1]$ with $\varphi(C\cap U)\subset C$, 
there exists a neighborhood $U_{a_{\infty}}$ of $a_{\infty}$ and an affine 
function $\psi$ such that for large enough $n$ the following holds:
\[\varphi(x)=
\psi(x), \; {\rm for } \, x\in C_{\lambda}\cap U_{a_{\infty}}.\]

Around each left point $a_n$ there are affine maps  
$\psi_{a_n,k_n}:U_{a_n,k_n}\to [0,1]$ defining germs in $D(a_n,c_n)$, 
where $c_n=\varphi(a_n)$,  such that $c_n$ converge to $c_{\infty}=\varphi(a_{\infty})$ and 
\[\varphi(x)=
\psi_{a_n,k_n}(x), \; {\rm for } \, x\in C_{\lambda}\cap U_{a_n,k_n}.\]

We can further assume that $U_{a_n,k_n}\cap C_{\lambda}$ are clopen sets and we can take 
$U_{a_n,k_n}=[a_n,b_n]$ where $b_n$ are right points of $C_{\lambda}$, 
and the sequence $a_n$ is monotone, say increasing.

There is no loss of generality to assume that 
$\psi'_{a_n,k_n}\Bigr|_{C\cap U_{a_n,k_n}}$ is independent on $n$, 
say it equals $\lambda^m$, namely $k_n=m$. Replacing $\varphi$ by its inverse 
$\varphi^{-1}$ we can also assume that $m\leq 0$. 
Since $C_{\lambda}$ is invariant by the homothety of factor 
$\lambda$ and center $0$, we can further reduce the problem to the case 
where $m=0$. We have then $\varphi'(a_{\infty})=1$, by continuity. 

Choose $n$ large enough so that 
$|\varphi'(x)-1|<\varepsilon$, for any $x\in [a_n,a_{\infty}]$, 
where  the exact value of $\varepsilon$ will be chosen later.   
Let now consider the maximal interval of the form  
$[a_n, b]$ to which we can extend 
$\psi_{a_n,0}$ to an affine function which coincides 
with $\varphi$ on $C\cap [a_n,b]$. 

If $b=a_{\infty}$, then the Lemma follows. 
Otherwise, it is no loss of generality in assuming that 
$b=b_n$ and thus $b$ is a right point of $C_{\lambda}$. 
Then $b_n$ is adjacent to some gap $(b_n, d)$. 
Since $d$ is a left point of $C_{\lambda}$ and 
$\varphi'(d)=1$, we can suppose that $d=a_{n+1}$. 

Since $\varphi$ preserves $C\cap U$, it should send the gap 
$(b_n,a_{n+1})$ into some gap contained into 
$[\varphi(a_n), \varphi(b_{n+1})]$. 
Recall from above that the ratios of lengths of gaps of $C_{\lambda}$ is the discrete 
subset $\langle \lambda\rangle\subset \R^*$. 
When $|\varphi'(x)-1|<\varepsilon$, we derive that 
the ratio of the lengths of the gaps 
$\varphi(b_n,a_{n+1})$ and $(b_n,a_{n+1})$ is bounded by 
$1+\varepsilon$. By taking $\varepsilon< 1-\lambda$
we see that the only possibility is that 
the lengths of these two gaps coincide, namely that 
\[ \varphi(a_{n+1})=\varphi(b_n)+a_{n+1}-b_n.\]
This implies that there is a smooth extension of 
$\psi_{a_n,0}$ to an affine function on 
$[a_n, b_{n+1}]$ which coincides with $\varphi$ 
on points of $C_{\lambda}$, contradicting the maximality of $b=b_n$.
This proves that $b=a_{\infty}$, proving the claim. 

When $a_{\infty}$ is not a right point we also have an affine 
extension of $\varphi$ to a right neighborhood of $a_{\infty}$, by the same argument. 
\end{proof}

Consider the rooted binary tree $\mathcal T$ embedded in the plane so that 
its ends abut on the interval $[0,1]$. We label each  
edge $e$ by $l(e)\in\{0, \lambda-1\}$, such that the leftmost edge 
is always labeled $0$. Let $v$ be a vertex of $\mathcal T$ 
and $e_1,e_2,\ldots, e_n$ the sequence of edges representing the geodesic 
which joins the root to $v$. To the vertex $v$ one associates 
then the number 
\begin{equation}
r(v)=\sum_{j=1}^nl(e_j) \lambda^{-j}. 
\end{equation}
Denote by $D(v)$ the set of all descendants of the vertex $v$.
If $I$ is a closed interval in $[0,1]$ we claim that 
$L(C_{\lambda})\cap I$ coincides with the set $r(D(v_I))$, for some unique vertex 
$v_I\in \mathcal T$. 
Furthermore, if $I_1,I_2,\ldots, I_k$ 
is a set of disjoint standard intervals covering $C_{\lambda}$ then 
$v_{I_1},v_{I_2},\ldots,v_{I_k}$ are the leaves of a finite binary 
subtree $T(I_1,I_2,\ldots,I_k)$ of $\mathcal T$ containing the root. 
In particular, if $J_1, J_2,\ldots, J_k$ is another covering of $C_{\lambda}$ 
by standard intervals 
then we have two finite trees 
$T(I_1,I_2,\ldots,I_k)$ and $T(J_1,J_2,\ldots,J_k)$. 
Further,  we also have affine bijections 
$\varphi_j:I_j\to J_j$ which are of the form 
$\varphi_j(x)=b_j+\lambda^{k_j}(x-a_j)$, where $a_j,b_j\in L(C_{\lambda})$. 
It is clear that $\varphi_j(I_j\cap L(C_{\lambda}))=J_j\cap L(C_{\lambda})$. 
The explicit form of $\varphi_j\Bigr|_{I_j\cap L(C_{\lambda})}$ actually can 
be interpreted in terms of $r(v_{I_j})$, as follows. 
Let $\mathcal D(v)$ be the planar rooted subtree of $\mathcal T$ of vertices 
$D(v)$ and root $v$. There is a natural identification 
$\iota_{v,w}$ of the planar binary rooted trees 
$D(v)$ and $D(w)$, for any $v,w\in \mathcal T$.  
When we further identify  
$L(C_{\lambda})\cap I_j$ with the set $r(D(v_{I_j}))$
the induced action of $\varphi_j$  on $w\in D(v_{I_j})$ 
coincides with $\iota_{v_{I_{j}},v_{J_{j}}}$.

Consider now the operation of replacing an interval $I_j$  
by two disjoint intervals $I_j'$ and $I_j"$ 
whose union is disjoint from the other intervals $I_k$. 
Correspondingly we replace $J_j$ by  the couple $\{J_j', J_j"\}=\{\varphi_j(I_j'), \varphi_j(I_j")\}$ and $\varphi_j$ by its restrictions to these smaller intervals. 
This operation does not change the element in $\diff^1(C_{\lambda})$. 
 The immediate consequence of the description of $\varphi_j$ 
is that the pairs of trees 
$T(I_1,\ldots,I_j',I_j",\ldots,I_k)$ and 
$T(J_1,\ldots,J_j',J_j",\ldots,J_k)$ are both obtained from 
$T(I_1,I_2,\ldots,I_k)$ and $T(J_1,J_2,\ldots,J_k)$ 
by adding one caret at the $j$-th leaf. This proves that 
this pair of trees is a well-defined element of 
the standard Thompson group $F$. It is rather clear that 
the map defined this way $\diff^{1,+}(C_{\lambda})\to F$
is an isomorphism. 

In a similar way  we define an isomorphism 
$\diff^{1,+}_{S^1}(C_{\lambda})\to T$, when we work with 
the infinite unrooted binary tree $\mathcal T$ embedded 
in the plane so that its ends abut to $S^1$. 

In the case of $\diff^1_{S^2}(C_{\lambda})$ we use the proof of Theorem \ref{V-type} 
and the infinite unrooted binary tree $\mathcal T$ without any planar structure. 
The only difference is that the restrictions $\varphi\Bigr|_{I_j}$ 
are not having anymore a coherent orientation. Some of them might 
be orientation preserving while the others not. 
This explains  the isomorphism  between $\diff^1_{S^2}(C_{\lambda})$ and the signed Thompson group  $V^{\pm}$.
This ends the proof of Theorem \ref{thompgen} in the case of $C=C_{\lambda}$.

\subsection{Proof of  the general case of Theorem \ref{thompgen}} 
The only missing ingredient is the result generalizing Lemma \ref{finiteness} 
to the more general self-similar sets considered here, as follows: 

\begin{lemma}\label{finitenessgen}
Let $\varphi\in \Diff^{1,+}(\R,C)$, where $C=C_{\Phi}$ is a self-similar Cantor set satisfying the genericity condition (C). Then there is a covering of $C$ by a finite collection of disjoint intervals $I_k$,  such that
$\varphi\Bigr|_{C\cap I_k}$ is the restriction 
of an affine function to $I_k\cap C$. 
\end{lemma}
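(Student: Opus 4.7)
The plan is to follow the strategy of the proof of Lemma \ref{finiteness}, with the role of the discrete subgroup $\langle\lambda\rangle\subset\R^*$ played in the general self-similar setting by the multiplicative group $\Lambda=\{\Lambda_{\mathbf k}:\mathbf k\in\Z^{n+1}\}$ generated by the contraction factors $\lambda_0,\dots,\lambda_n$.

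First, I would prove the analog of Lemma \ref{chi}: for any left (respectively right) endpoint $a\in C$ the image $\chi(\Diff^{1,+}_a)$ is a discrete subgroup of $\R^*_+$ containing $\Lambda$. The inclusion $\Lambda\subseteq\chi(\Diff^{1,+}_a)$ comes from the explicit affine germs $\phi_{I/J}$ of Definition \ref{germ} applied at left endpoints of $C$. The discreteness sharpens Lemma \ref{nonflat}: the derivative $\varphi'(a)$ is the limit of ratios of lengths of gaps of $C$ adjacent to $a$, and all such ratios lie in the set $\{\Lambda_{\mathbf k}\,g_\alpha/g_\beta\}$, which is a finite union of cosets of $\Lambda$ and hence discrete in $\R^*$. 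Continuity of $\varphi'$ together with density of left endpoints in $C$ then yields $\varphi'(x)\in\Lambda$ for every $x\in C$, and since $\varphi'$ is continuous on the compact ambient interval it takes only finitely many values on $C$.

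Second and most delicate is the local affinity statement: around each $c\in C$ there exists an open interval $U_c$ such that $\varphi|_{C\cap U_c}$ is the restriction of a single affine function of slope $\Lambda_{\mathbf m}=\varphi'(c)$. For a left endpoint $c$ this is immediate from the germ classification. For a general $c$ one takes a monotone sequence of left endpoints $a_p\to c$ with corresponding affine local representatives $\psi_{a_p}$ agreeing with $\varphi$ on $C\cap[a_p,b_p]$; discreteness of $\Lambda$ and continuity of $\varphi'$ force all slopes to equal $\Lambda_{\mathbf m}$ for $p$ large enough, and one must then check that these affine pieces glue. If $\psi_{a_p}$ is defined on a maximal interval whose right endpoint $b_p$ is adjacent to a gap $(b_p,a_{p+1})$ of length $\Lambda_{\mathbf j}g_\alpha$, then its $\varphi$-image is a gap of $C$ of length $\Lambda_{\mathbf k}g_\beta$, and the mean value theorem produces an interior point $\xi$ with
\[
\varphi'(\xi)=\frac{\Lambda_{\mathbf k}\,g_\beta}{\Lambda_{\mathbf j}\,g_\alpha}.
\]
Shrinking $U_c$ so that $\varphi'$ is arbitrarily close to $\Lambda_{\mathbf m}$ on it, discreteness of the ratio set forces $\Lambda_{\mathbf k}g_\beta=\Lambda_{\mathbf m+\mathbf j}g_\alpha$, which is exactly the identity needed to extend $\psi_{a_p}$ affinely across the gap and to match it with $\psi_{a_{p+1}}$.

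Finally, having secured local affinity at every point of $C$, compactness of $C$ produces a finite subcover by open sets on each of which $\varphi|_C$ is the restriction of a single affine function. Replacing each such set by its intersection with a closed interval whose boundary lies in the ambient gaps of $C$ yields the desired finite collection of disjoint intervals. I expect the main obstacle to be the gap-matching step; for the weaker statement of Lemma \ref{finitenessgen} it suffices that the set of ratios $\{\Lambda_{\mathbf k}g_\alpha/g_\beta\}$ is discrete, a fact which follows from the multiplicative structure of $\Lambda$ and the finiteness of the number of initial gap types, without invoking the stronger incommensurability conditions of genericity (C) needed in the full Theorem \ref{thompgen}.
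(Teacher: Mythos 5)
Your plan founders on a single but decisive point: the multiplicative group $\Lambda=\{\Lambda_{\mathbf k}:\mathbf k\in\Z^{n+1}\}$ generated by $\lambda_0,\dots,\lambda_n$ is \emph{not} discrete in $\R^*_+$ unless it is cyclic, i.e. unless all the $\lambda_i$ are integer powers of a common real number. A finitely generated subgroup of $\R^*_+$ of rank at least two (take e.g. $\lambda_0=1/4$, $\lambda_1=1/3$) is dense, so the set of gap-length ratios $\{\Lambda_{\mathbf k}g_\alpha/g_\beta\}$ is a finite union of cosets of a dense group and is itself dense. This undermines both of your key steps. First, the stabilizer image $\chi(\Diff^{1,+}_a)$ cannot contain $\Lambda$: by Lemma \ref{l:stabilizer} it is a discrete, hence cyclic, subgroup of $\R^*_+$; and indeed the standard germs $\phi_{I/J}$ with $I\neq J$ do not fix $a$, so at a left endpoint such as $0$ they only yield $\langle\lambda_0\rangle\subset\chi(\Diff^{1,+}_0)$, not all of $\Lambda$. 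Second, and more seriously, the gap-matching step fails: knowing that $\varphi'(\xi)=\Lambda_{\mathbf k}\,g_\beta/(\Lambda_{\mathbf j}\,g_\alpha)$ is arbitrarily close to $\Lambda_{\mathbf m}$ no longer pins down the image gap, because there are infinitely many pairs $(\mathbf k,\beta)$ realizing ratios arbitrarily close to, but different from, $\Lambda_{\mathbf m}$. So the affine piece need not extend across the gap, and the maximality argument borrowed from Lemma \ref{finiteness} yields nothing.

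This is exactly why the paper treats the two branches of condition (C) separately. When all ratios and all initial gaps are equal, the relevant ratio set is $\langle\lambda\rangle$, which is discrete, and the proof of Lemma \ref{finiteness} does extend word for word --- that part of your proposal is sound. In the incommensurable case the paper replaces the metric discreteness argument by the combinatorial machinery of section \ref{prooffiniteness2}: affine pieces are encoded as standard germs and multi-germs on the $(n+2)$-valent tree, and one shows that under a uniform bound on the ratio of derivatives the only continuations of a standard germ across a gap are those dictated by the tree structure; the only discreteness invoked there is that of a single coset $c\cdot\langle\lambda_1\rangle$ of a cyclic group, which survives incommensurability. To salvage your approach you would have to restrict to the commensurable case ($\lambda_i=\alpha^{k_i}$ for a common $\alpha$), which is the situation of section \ref{prooffiniteness1}, not the general lemma.
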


The proof of this lemma for incommensurable parameters will occupy 
section \ref{prooffiniteness2}. In the case when gaps and homothety factors are respectively 
equal the proof given above extends word by word.

Now, any $\varphi$ in the group $\diff^{1,+}(C)$ 
corresponds to a pair of coverings of $C$ by intervals 
$(I_1,I_2,\ldots,I_k)$ and $(J_1,J_2,\ldots,J_k)$ so that 
$\varphi$ sends affinely $I_j$ into $J_j$, for all $j$. 
These intervals could be chosen to be of the form $[a,b]$, 
where $a$ is a left point of $C$ and $b$ is a right point of $C$.
We call them {\em clopen} intervals.  
  
Particular  examples of clopen intervals are the images of 
$[0,1]$ by the semigroup generated by $\Phi$, 
which will be called {\em standard (clopen)} intervals. 
Each standard clopen interval corresponds to a 
finite geodesic path descending from the root in the 
(regular rooted) tree of valence $n+2$ associated 
to $\Phi$. Thus standard intervals are associated to vertices of the $(n+2)$-valent tree, and one says that they belong to  
the $k$-th {\em generation of standard intervals} if the associated 
vertex is at distance $k$ from the root. The complementary intervals 
to the union of all $k$-th generation of standard intervals will be the 
$k$-th {\em generation of gaps}.  Moreover, given a standard interval 
$I$ of the $k$-th generation, the gaps of the $(k+1)$-th generation lying in $I$ will also be called the first generation 
of gaps in $I$. Notice that, conversely, every gap is a first generation gap for some uniquely determined 
standard interval, to be called its {\em antecedent} standard interval.

Note that the intervals obtained in the previous lemma were not necessarily standard intervals. 
It then remains to prove the following enhancement of Lemma \ref{finitenessgen}: 

\begin{lemma}\label{standard}
We assume that  $C=C_{\Phi}$, where $\Phi$ verifies the genericity condition (C) from Definition \ref{genericity}.
Then any $\varphi\in \diff^{1,+}(C)$ 
corresponds to a pair of coverings of $C$ by standard intervals 
$(I_1,I_2,\ldots,I_k)$ and $(J_1,J_2,\ldots,J_k)$ so that 
$\varphi$ sends affinely $I_j$ into $J_j$, for all $j$. 
\end{lemma}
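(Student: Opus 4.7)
The plan is to take the clopen decomposition provided by Lemma~\ref{finitenessgen} and, invoking the genericity condition~(C), refine it into a pair of coverings by \emph{standard} intervals. Fix $\varphi\in\diff^{1,+}(C_\Phi)$ together with disjoint clopen intervals $I_k$ covering $C$, on which $\varphi|_{I_k\cap C}(x)=b_k+\mu_k(x-a_k)$ with $a_k,b_k\in C$ and $\mu_k>0$. Since any clopen subset of $C$ is a finite disjoint union of standard cylinders $\phi_P([0,1])\cap C$ indexed by an antichain in the $(n+1)$-ary tree of $\Phi$, we are free to subdivide both $I_k$ and $J_k:=\varphi(I_k)$ into standard intervals at arbitrary depth; the task is only to arrange that the resulting source and target decompositions match up under $\varphi$.

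The first and main step is to show that each slope satisfies $\mu_k=\Lambda_{\mathbf m_k}$ for a unique $\mathbf m_k\in\Z^{n+1}$. Since $\varphi$ is monotone increasing on $I_k$ and carries $C\cap I_k$ bijectively onto $C\cap J_k$, each gap of $C$ lying in $I_k$, necessarily of length $\Lambda_{\mathbf p}g_\alpha$ with $\mathbf p\in\Z_+^{n+1}$, is sent to a gap of $C$ inside $J_k$, of length $\Lambda_{\mathbf q}g_\beta$. This yields an identity
\[
\mu_k\,\Lambda_{\mathbf p}\,g_\alpha=\Lambda_{\mathbf q}\,g_\beta
\]
for each gap of $I_k$. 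Dividing two such equations that share the same left-hand type $\alpha$ and invoking clause~(2a) of~(C) shows that the assignment $\sigma\colon\alpha\mapsto\beta$ is well-defined. All gap types appear in $I_k$ (any standard subinterval of depth $\ge 1$ already contains one of each), so $\sigma$ is a permutation of $\{0,\dots,n-1\}$. Multiplying the $n$ equations $\mu_k=\Lambda_{\mathbf k_\alpha}g_{\sigma(\alpha)}/g_\alpha$ gives $\mu_k^n=\Lambda_{\sum_\alpha\mathbf k_\alpha}$, and the individual ratios rearrange to
\[
\frac{g_{\sigma(\alpha)}}{g_\alpha}=\Lambda_{\,-\mathbf k_\alpha+\frac{1}{n}\sum_\beta\mathbf k_\beta}.
\]
Clause~(2b) forces $\sigma=\mathrm{id}$, after which~(2a) forces all $\mathbf k_\alpha$ to coincide with one multi-index $\mathbf m_k$; hence $\mu_k=\Lambda_{\mathbf m_k}$.

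The second step is the refinement. Subdivide each $I_k=\bigsqcup_s\phi_{P_s}([0,1])$ deeply enough that every exponent vector $\mathbf m_k+\mathbf P_s$ has non-negative entries. Then $\varphi(\phi_{P_s}([0,1]))$ is an interval of length $\Lambda_{\mathbf m_k+\mathbf P_s}$, the length of a standard interval. Because $\sigma=\mathrm{id}$, the ordered sequence of gaps adjacent to $\phi_{P_s}([0,1])$ inside $I_k$ is carried to the ordered sequence of gaps adjacent to the image, and these are precisely the gaps framing a standard interval in $J_k$ with the canonical IFS labelling. Hence $\varphi(\phi_{P_s}([0,1]))=\phi_{Q_s}([0,1])$ for a unique multi-index $Q_s$ depending on $P_s$ and $\mathbf m_k$, giving the required pair of standard coverings.

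The main obstacle is the slope step: one must extract the full system of gap-length equations simultaneously, and apply both parts of the genericity condition~(C) in tandem in order to exclude spurious gap-type permutations coupled with non-lattice scalings. Once $\mu_k\in\langle\Lambda_{\mathbf k}\rangle$ is established, the refinement step becomes essentially bookkeeping. The remaining case in condition~(C), where all $\lambda_i$ coincide and all $g_\alpha$ coincide, is handled by the proof of the previous subsection applied verbatim to the $(n+1)$-ary IFS tree in place of the binary tree.
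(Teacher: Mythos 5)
Your computation of the slopes is essentially the paper's: the gap-length identities $\mu_k\Lambda_{\mathbf p}g_\alpha=\Lambda_{\mathbf q}g_\beta$, the map $\sigma$ made well defined via clause (2a) of condition (C), the relation $\mu_k^n=\Lambda_{\sum_\alpha\mathbf k_\alpha}$ obtained by multiplying over a full set of gap types, and the use of (2b) to force $\sigma=\mathrm{id}$ all occur in the paper's proof in the same form. The structural difference is that the paper first \emph{normalizes}: it writes $J=\varphi(I)$ as a finite union of standard intervals, picks the largest one $J^u$, and composes $\varphi$ with the standard affine germ carrying $J^u$ back to $I$, so that $I=J^u\subseteq J$ and the ratio satisfies $\mu\geq 1$. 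It then runs the gap analysis in \emph{both} directions (images of the first-generation gaps of $I$, and preimages of those same gaps regarded as gaps of $J$), eliminates $\mu$, and concludes $\mu=1$; standardness of $J$ is then immediate from $J^u\subseteq J$ together with $|J|=\mu|I|=|J^u|$.

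Because you skip this normalization, your final refinement step has a genuine gap. Knowing that $\varphi(\phi_{P_s}([0,1]))$ has endpoints in $C$, has the length $\Lambda_{\mathbf m_k+\mathbf P_s}$ of a standard interval, and is framed by and contains gaps of the corresponding lengths does not by itself make it standard. By (2a) each image gap of length $\Lambda_{\mathbf Q}g_\alpha$ is the $\alpha$-th first-generation gap of \emph{some} standard interval with abelianized multi-index $\mathbf Q$, but there are in general many distinct standard intervals with the same abelianized index (hence the same length and the same internal gap lengths), and a priori the $n$ image gaps could lie in different ones, with $\varphi(\phi_{P_s}([0,1]))$ straddling several of them together with the larger gaps that separate them. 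This must be excluded: in the central case the paper does so by noting that two such gaps lying in distinct standard intervals would be separated by a strictly larger gap, contradicting the maximality of the gaps chosen, and in the general case it sidesteps the issue entirely through the $I=J^u$ normalization. A secondary point: clause (2b) is stated for $\mathbf k,\mathbf k_\alpha\in\Z_+^{n+1}$, whereas your $\mathbf k_\alpha=\mathbf q_\alpha-\mathbf p_\alpha$ may have negative entries; this is repairable because the exponent $-\mathbf k_\alpha+\frac{1}{n}\sum_\beta\mathbf k_\beta$ is unchanged when a common vector is added to all the $\mathbf k_\alpha$, but the shift should be made explicit.
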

\begin{proof}
Every clopen interval is the disjoint union of finitely many 
standard intervals and open gaps.  We can therefore 
suppose that all  $I_j$ are standard intervals. 

Note that for any two standard intervals $I, J\subset [0,1]$ there exists an affine 
bijection $(I, I\cap C) \to (J, J\cap C)$, because this holds when $I=[0,1]$. 

We now claim that, conversely, if there exists  an affine 
bijection $\varphi: (I, I\cap C) \to (J, J\cap C)$ and $I$ is standard then 
its image $J$ is also a standard interval. This will prove Lemma \ref{standard}.

Consider a  maximal standard interval $I'\subset J$.  Composing $\varphi$ with 
the affine map in $\diff^{1,+}(C)$ sending  bijectively $(I', I'\cap C)$ onto $(I, I\cap C)$  we can assume that 
$I=I'$.  In particular, the homothety factor $\mu$ of the affine map $\varphi:I\to J$ 
is at least $1$.

Assume first that all homothety ratios are equal to $\lambda$ and all  initial gaps have the same length $g$, 
as in Definition \ref{genericity}.(1).  Observe that all gaps will have sizes of the form $\lambda^mg$, for some $m\in \Z_+$. 
Moreover, if  $I$ is a standard interval of the $k$-th generation, then 
the set of largest gaps in $I$ consists of $n$ equidistant gaps of size $\lambda^kg$.  
Their image by the affine map $\varphi$ is the set of largest gaps in $J$, so that the latter are  
also $n$ equidistant equal gaps in $J$, necessarily of size $\lambda^{n+k}g$, for some $n\in\Z_-$. 
In particular the homothety factor is  $\mu=\lambda^{n}$.  

We  now consider the antecedent standard intervals associated to the largest gaps of $J$.  
If such a gap had size $\lambda^{n+k}g$, its antecedent interval would have size $\lambda^{n+k}$. 
If two of the largest gaps in $J$ have distinct antecedent intervals, then they would be separated 
by another gap of size $\lambda^{n-1+k}g$, contradicting their maximality in $J$. 
Therefore all but possibly the leftmost and rightmost intervals of  the complement 
of these $n$ gaps in $J$  are standard. 

Now, the interval between two consecutive gaps in $I$ 
is a standard interval of length $\lambda^{k+1}$, whose image by the affine map $\varphi$ 
has length $\lambda^{n+k+1}$. 
This shows that the leftmost and the rightmost intervals also 
should be standard intervals, as they have the same size as the remaining 
$(n-1)$ standard intervals between consecutive image gaps. 
This proves that $J$ is a standard interval.

Consider now the case when homothety factors and gaps lengths are incommensurable, as in Definition \ref{genericity}.(2). 
The set of gaps of the same generation 
is totally ordered from the leftmost gap towards the right.  
The sequence of lengths of $(k+1)$-th generation gaps within a 
standard interval of the $k$-th generation is of the form 
$(\Lambda_{\mathbf k}g_1,\Lambda_{\mathbf k}g_2, \ldots, \Lambda_{\mathbf k}g_n)$, for some 
$\mathbf k$. 
Consider now a gap of the first generation, say 
$\Lambda_{\mathbf k}g_{\alpha}$, of $I$. Its image by an affine map 
should be a gap of $J$. It follows that there exists some $\sigma(\alpha)\in \{1,2,\ldots,n\}$ and $\mathbf k_{\alpha}\in \Z_+^{n+1}$, 
so that: 
\[ \mu \Lambda_{\mathbf k} g_{\alpha} =\Lambda_{\mathbf k_{\alpha}} g_{\sigma(\alpha)},\] 
where $\mu$ is the homothety factor of the map $\varphi$. 
Conversely, any gap of $I\subset J$ is the image 
by $\varphi$ of some gap of $I$, and hence there exists some 
 $\tau(\alpha)\in \{1,2,\ldots,n\}$ and $\mathbf l_{\alpha} \in \Z_+^{n+1}$, so that:
\[ \frac{1}{\mu} \Lambda_{\mathbf k} g_{\alpha} = \Lambda_{\mathbf l_{\alpha}} g_{\tau(\alpha)}.\] 
Getting rid of $\mu$ in the two equalities above we obtain the following identities, for all $\alpha,\beta$:
\[ \Lambda_{\mathbf k_{\alpha} +\mathbf l_{\beta}-2\mathbf k }  \; g_{\sigma(\alpha)}g_{\tau(\beta)}=
g_{\alpha} g_{\beta}.\]
By taking $\beta=\sigma(\alpha)$ we derive: 
\[  \Lambda_{\mathbf k_{\alpha} +\mathbf l_{\sigma(\alpha)}-2\mathbf k } \; g_{\tau(\sigma(\alpha))}= g_{\alpha}.\]

If $g_{\alpha} $ and $\lambda_j$ satisfy the genericity condition (C)  the last equality implies 
$\tau(\sigma(\alpha))=\alpha$ and 
$\mathbf k_{\alpha} +\mathbf l_{\sigma(\alpha)}=2\mathbf k$, for every $\alpha$. 
A symmetric argument yields $\sigma(\tau(\alpha))=\alpha$, so that 
$\sigma$ and $\tau$ are bijections inverse to each other. 
Furthermore we derive: 
\[ \mu^n= \prod_{\alpha=1}^n \Lambda_{\mathbf k_{\alpha} -\mathbf k} \frac{g_{\sigma(\alpha)}}{g_{\alpha} }=\Lambda_{\sum_{\alpha=1}^n(\mathbf k_{\alpha} -\mathbf k)},\]
so that 
\[ \mu=\Lambda_{-\mathbf k +\frac{1}{n}\sum_{\alpha=1}^n\mathbf k_{\alpha}}. 
\]
Therefore, for each $\beta$ we have: 
\[  \frac{g_{\sigma(\beta)}}{g_{\beta} }= \Lambda_{-\mathbf k_{\beta}+\frac{1}{n}\sum_{\alpha=1}^n\mathbf k_{\alpha}}.\]
Then our assumptions of genericity imply that $\sigma$ must be identity. 
It turns that all $\mathbf k_{\alpha}$ are equal to some $\overline{\mathbf k}$ and hence $\mu=\Lambda_{\overline{\mathbf k}-
\mathbf k}$. 

Observe that there exists  a standard interval $J'$ and an affine bijection $\psi:(I,I\cap C)\to (J',J'\cap C)$ with homothety factor $\Lambda_{\overline{\mathbf k}-\mathbf k}$. 
Therefore $\varphi\circ \psi^{-1}:J'\to J$ is a translation.  
Moreover,  as $\sigma$ was identity the sequence of first generation gaps in $J'$ is sent by $\varphi\circ \psi^{-1}$ into the sequence of first generation gaps of some standard interval $J''$. 
By induction, the ordered sequence of the $k$-th generation of gaps in $J'$ is sent by $\varphi\circ \psi^{-1}$ into the sequence of the $k$-th generation gaps of $J''$. Since $J$ and $J''$ have the same length it follows that $J=J''$ and hence $J$ is a standard interval, as claimed. 
\end{proof}

Now, it is immediate that 
$\langle \lambda_0\rangle \subset \chi(\Diff^{1,+}_0)$, and by Lemma \ref{l:stabilizer}
there exists some $N\in\Z_+$ so that 
 $\chi(\Diff^{1,+}_0)=\langle \lambda_1^{1/N}\rangle$. 
Since $L(C)$ is affinely locally homogeneous this holds for any left point $a$ of $C$. 

Then, the general form of an affine germ locally preserving $C$ around a left point $c_k\in C\cap I_k$ is:  
\begin{equation}\label{nongeneric}
\varphi(x)=\varphi(c_k)+ \Lambda_{{\bf j}_k,N}(x-c_k), \qquad {\rm for }\quad x\in I_k\cap C, 
\end{equation}
where, for each multi-index $\mathbf k=(k_0,k_1,\ldots,k_n)$ we put:
\begin{equation}
 \Lambda_{\mathbf k,N}= \lambda_0^{k_0/N}\prod_{i=1}^n\lambda_i^{k_i}. 
 \end{equation} 

Furthermore,  we can modify any germ in $\Diff^{1,+}_0$ by using homotheties 
of ratios $\lambda_0^k$, $k\in \Z$ in order to obtain a diffeomorphism  
$\varphi:[0,1]\to [0,r]$ sending $C$ into $C$. By Lemma \ref{finitenessgen} 
we can assume that $\varphi$ is an affine map, and by Lemma  \ref{standard} 
$[0,r]$ must be a standard interval. It follows that the homothety factor of $\varphi$ is 
a power of $\lambda_0$. This implies that $N=1$.  

Pairs of coverings by standard clopen intervals of $C$  correspond 
to pairs of finite rooted subtrees. Subdividing the covering 
by standard subintervals is then equivalent to stabilizing the trees. 
This provides isomorphisms with the Thompson groups $F_{n+1}$, $T_{n+1}$ and the signed Thompson group $V^{\pm}_{n+1}$,  respectively, 
ending the proof of Theorem \ref{thompgen}.

\subsubsection{Proof of Lemma \ref{finitenessgen} for incommensurable parameters}\label{prooffiniteness2} 
We will use a 
much weaker restriction than the total incommensurability, see the conditions used below.
 
Recall from section \ref{thompIFSsec} that the rooted $(n+2)$-valent tree associated to the IFS has 
the edges issued from a vertex labeled by integers from  $0$ to $n$ (from left to right). Then left points of $C$ correspond to 
sequences which eventually end in $0$, namely of the form 
\[ L(i_1\ldots i_p)=i_1i_2\ldots i_p 000000\ldots, \]
while right points correspond to sequences 
which eventually end in $n$:  
\[  R(i_1\ldots i_p)=i_1\ldots i_p nnnnnn\dots.\]

Consider two finite multi-indices $I=i_1\ldots i_p$ 
and $J=j_1\ldots j_q$ and set 
$a=L(i_1\ldots i_p)$, $b=R(i_1\ldots i_p)$, 
$\alpha=L(j_1\ldots j_q)$, $\beta=R(j_1\ldots j_q)$.
Following Definition \ref{germ} the {\em standard germ} $\psi_{I,J}$  is the affine map 
$\psi_{I,J}: [a,b]\to [\alpha,\beta]$ given by the formula: 
\[
\psi_{I,J}(x)= a+ \left(\frac{\prod_{m=1}^q \lambda_{j_m}}{\prod_{k=1}^p \lambda_{i_k}^{-1}}\right) (x-a). 
\]

Each multi-index $I$ determines a vertex $v_I$ of the tree, 
which is the endpoint of the geodesic issued from the 
root which travels along the edges labeled $i_1,i_2,\ldots,i_p$.  
Then, at the level of trees a standard germ corresponds 
to a combinatorial map sending the subtree hanging at 
the vertex $v_I$  onto the subtree issued from the vertex $v_J$, as in the figure below:

%\begin{figure}\label{fig2}
\begin{center}
\includegraphics[width=0.5\textwidth]{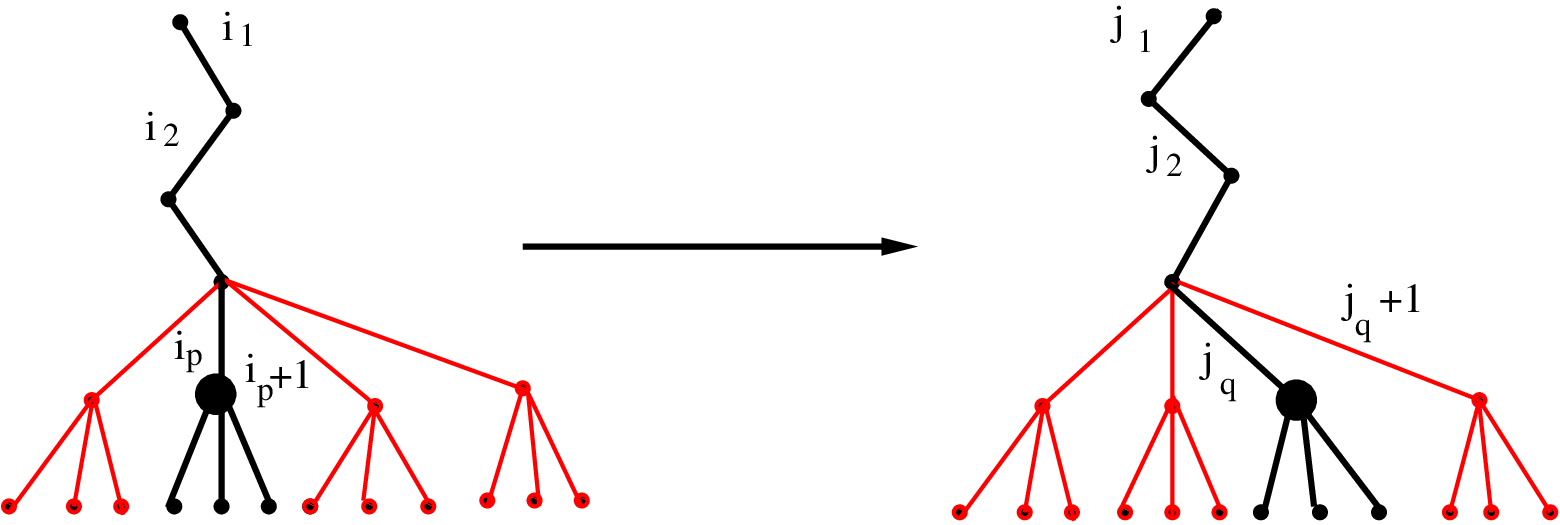}
\end{center}
%\end{figure}

An {\em  extension} of the standard germ $\psi:[a,b]\to [\alpha,\beta]$ 
is a standard germ defined on $[c,d]\supset[a,b]$ whose restriction to $[a,b]$ coincides with $\psi$ such that 
$[c,d]$ corresponds to a vertex $v_{I'}$ of the tree 
whose multi-index $I'$ is a prefix of $I$, namely 
$I'=i_1i_2\ldots i_{r}$ with $r\leq p$. Note  
that a non-trivial extension of $\psi$ exists only if $i_p=j_q$.

A {\em multi-germ} is a finite collection of standard germs
$\psi_j:[a_j,b_j]\to [\alpha_j,\beta_j]$
such that: 
\[
a_1<b_1<a_2<b_2<c\dots < a_k<b_k, \quad 
\alpha_1< \beta_1<\alpha_2<\beta_2<\cdots <\alpha_k <\beta_k
\]
and  $[b_j, a_{j+1}]$ and $[\beta_j,\alpha_{j+1}]$ are gaps of $C$, for all $j$.

Eventually an {\em extension of a multi-germ} $\{\psi_j\}_{j=1,k}$ 
is a multi-germ $\{\theta_j\}_{j=1,m}$ such that every standard 
germ $\psi_j$ is extended by some $\theta_i$. Notice that  
several elements of the multi-germ $\{\psi_j\}_{j=1,k}$
might have the same extension $\theta_i$.

\begin{lemma}
 Let $\{\psi_j\}_{j=1,m}$ be a multi-germ with the property that there exist
constants $\mu,\nu >0$ satisfying
\[\frac\mu\nu > \frac{1}{\max(\lambda_1,\lambda_2,\ldots,\lambda_n)},\]
such that: 
 \begin{equation}\label{eq:small-1}
 \mu\leq\psi_j'(x)\leq\nu,  \quad {\rm for \;\; every }\; \; x. 
\end{equation}
If  one standard germ $\psi_i$, for some $i\in \{1,2,\ldots,m\}$, admits an extension $\chi$, then 
there exists an extension of  the multi-germ $\{\psi_j\}_{j=1,m}$ 
containing the standard germ $\chi$.
 
Moreover, if a diffeomorphism $\varphi\in \Diff^1(\R,C)$ 
whose derivative $\varphi'$ verifies the condition for 
derivative (\ref{eq:small-1}) coincides with the 
multi-germ $\{\psi_j\}_{j=1,m}$ 
on $[a_1,b_m]$, then it coincides with $\chi$ on its domain of 
definition.
\end{lemma}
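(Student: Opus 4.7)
The plan is to factor $\chi$ through single one-level extensions and prove each step separately. Assuming $[c,d]=\operatorname{dom}(\chi)$ is the parent of $[a_j,b_j]$ in the $(n{+}2)$-valent tree, write $[c,d]$ as the union of its $n{+}1$ ordered children $J_0<J_1<\cdots<J_n$ (with $J_{r_0}=[a_j,b_j]$) separated by first-generation gaps, and similarly $[\gamma,\delta]=\chi([c,d])=K_0\sqcup\cdots\sqcup K_n$; since $\chi$ is affine on $[c,d]$, one has $\chi(J_r)=K_r$ for all $r$.

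I would then identify the germs $\psi_i$ that are ``swallowed'' by the extension, namely those with $[a_i,b_i]\subset[c,d]$; by nesting of standard intervals, each such $[a_i,b_i]$ lies inside one of the $J_r$. The key step is to verify that $\chi$ already extends each such $\psi_i$. Both $\chi|_{[a_i,b_i]}$ and $\psi_i$ are standard germs on $[a_i,b_i]$ with slopes in $[\mu,\nu]$; the slope of $\chi|_{[a_i,b_i]}$ is the slope $s_j$ of $\chi$ (equivalently of $\psi_j$), so the ratio $s_j/s_i$ lies in $[\mu/\nu,\nu/\mu]$. Because slopes of standard germs are products of integer powers of the $\lambda_r$'s, the hypothesis on $\mu/\nu$ pins $s_j/s_i$ down to the trivial product, giving $s_j=s_i$. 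A positional check then forces agreement of the germs: $[\alpha_i,\beta_i]$ must be the unique standard interval on the correct side of $[\alpha_j,\beta_j]$ of length $s_i(b_i-a_i)$, and this interval is precisely $\chi([a_i,b_i])$.

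Once each $\psi_i$ inside $[c,d]$ is known to be extended by $\chi$, the desired extended multi-germ is obtained by deleting those $\psi_i$'s and inserting $\chi$ in their place, leaving the remaining germs untouched; the multi-germ axioms (monotone ordering and complementary gaps being gaps of $C$) survive because $\chi$ faithfully records the tree structure inside $[c,d]$ and no source interval outside $[c,d]$ is modified.

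The second assertion follows by the same mechanism applied to $\varphi$: since $\varphi'\in[\mu,\nu]$, the slope-matching argument above forces $\varphi$ to send each standard subinterval of $[c,d]$ to the corresponding standard subinterval of $[\gamma,\delta]$, and combined with $\varphi|_{[a_j,b_j]}=\psi_j=\chi|_{[a_j,b_j]}$ this pins down $\varphi\equiv\chi$ on $[c,d]$. The main obstacle -- and the sole place where the quantitative bound $\mu/\nu>1/\max_r\lambda_r$ is really used -- is the combinatorial slope-matching: one must rule out all non-trivial $\lambda$-power ratios from the interval $[\mu/\nu,\nu/\mu]$, which is exactly what the hypothesis delivers; the remainder is a tree-bookkeeping argument along the $(n{+}2)$-valent tree.
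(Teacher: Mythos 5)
Your reduction to one-level extensions and the final bookkeeping are fine, but the key step fails. You claim that because $s_j/s_i$ is a product of integer powers of the $\lambda_r$'s and lies in $[\mu/\nu,\nu/\mu]$, the derivative hypothesis pins it to the trivial product. This presupposes that the set of such products is discrete near $1$, which is false precisely in the situation this lemma is needed for: when the $\lambda_r$ are multiplicatively independent (the incommensurable case of the genericity condition), the group $\{\prod_r\lambda_r^{m_r}\colon m_r\in\Z\}$ is dense in $\R_+^*$, so no interval bound on the derivatives can exclude nontrivial ratios. (For instance $\lambda_1=0.3$, $\lambda_2=0.31$ gives $\lambda_2/\lambda_1\approx 1.033$, inside any admissible window $[\mu/\nu,\nu/\mu]$.) A related problem undermines your ``positional check'': a standard interval is not determined by its length together with the side on which it lies --- e.g.\ $\phi_0\phi_1([0,1])$ and $\phi_1\phi_0([0,1])$ have equal length $\lambda_0\lambda_1$ but different positions --- so even with matching slopes you cannot conclude $\chi([a_i,b_i])=[\alpha_i,\beta_i]$ without seriously exploiting the gap conditions built into the definition of a multi-germ, which your argument only gestures at.

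The paper's proof takes an essentially different, combinatorial route that never needs global discreteness of slope ratios. It analyzes what a monotone continuation of $\psi_j=\psi_{I'k,J'k}$ to the right can look like on the $(n+2)$-valent tree: the derivative bound is used only to force the source and target ``square'' vertices to sit at the same depth below $v_{I'}$ and $v_{J'}$; if they are not the siblings $v_{I'(k+1)}$ and $v_{J'(k+1)}$, one is driven into an infinite descending path terminating at a boundary point $\xi$ which is not a right point of $C$, and the agreement of the continuation with the multi-germ on a right semi-neighborhood of $\xi$ (where points of $C$ accumulate) contradicts the choice of that path. Discreteness of slope ratios is established only in the subsequent lemmas, and only for ratios confined to a single coset of $\langle\lambda_1\rangle$, not for the full group your argument would require. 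To repair your proof you would have to replace the slope-discreteness step by this kind of level-by-level, gap-by-gap forcing.
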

\begin{proof}
The standard germ $\psi_j$ is of the form $\psi_j=\psi_{I,J}$, 
with $i_p=j_q=k$.  
We want to construct an increasing  function extending
the standard germ $\psi_{I,J}$ which satisfies the 
condition (\ref{eq:small-1}) for the derivative.  
Such a function will be called a {\em continuation} of $\psi_j$. 
Moving one step upward on the tree (i.e. the ancestor vertices) 
we arrive at the vertices $v_{I'}$ and $v_{J'}$, where 
$I=I'k$, $J=J'k$. 
 
Consider first $k < n$ and seek for a 
continuation on the right side of the interval on which $\psi_{I,J}$ 
is defined. Therefore the continuation must have form drawn below, 
where points marked by squares correspond to each other:

%\begin{figure}\label{fig5}
\begin{center}
\includegraphics[width=0.5\textwidth]{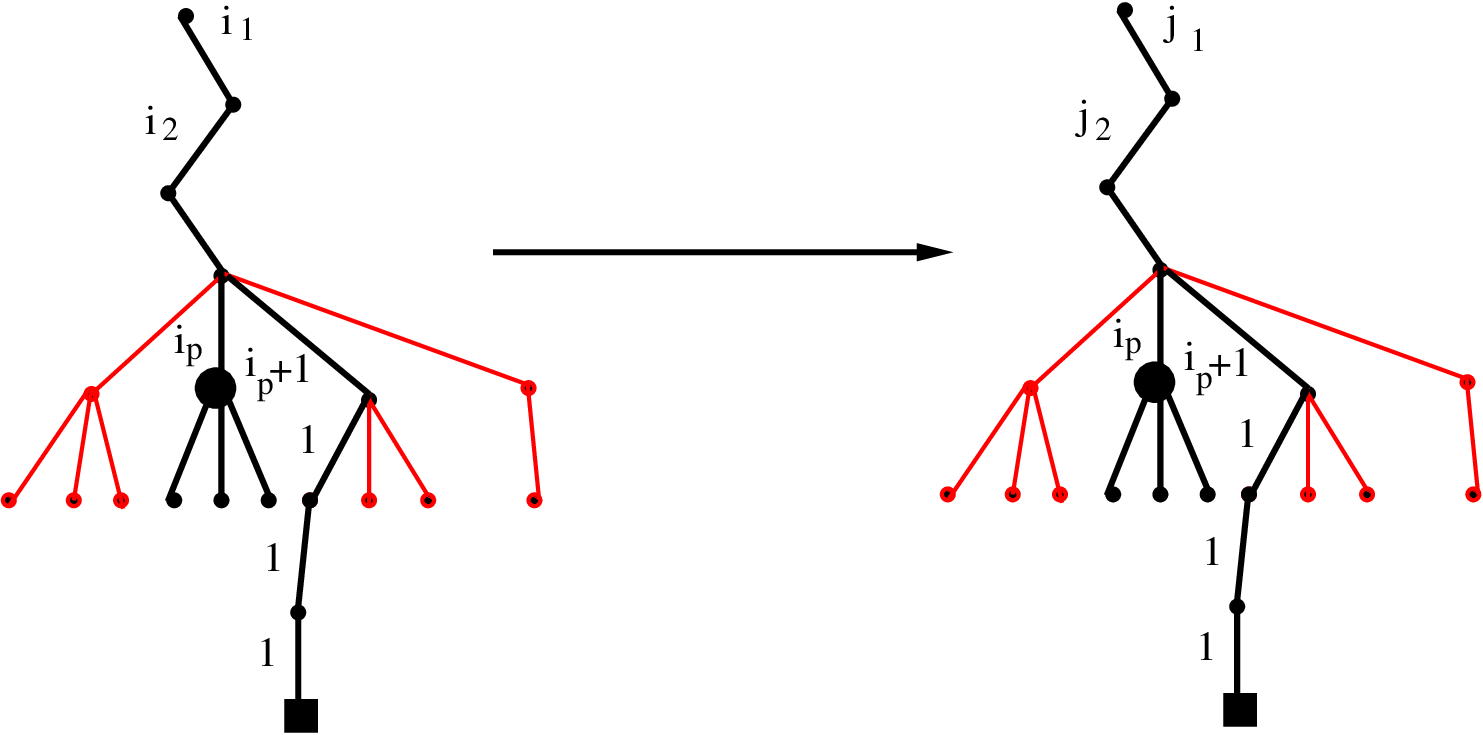}
\end{center}
%\end{figure}

Since the ratio of the derivatives is uniformly bounded, 
the vertices corresponding to squares should be on the same level, 
namely at equal distance from the vertices $v_{I'}$ and $v_{J'}$, 
respectively. Consider the highest possible level of such squares 
for which the extended map is compatible with the 
standard germ $\psi_{j+1}$. 
We claim that this continuation has the following form, namely that 
squares sit on the vertices $v_{I'k+1}$ and $v_{J'k+1}$:  

%\begin{figure}\label{fig7}
\begin{center}
\includegraphics[width=0.5\textwidth]{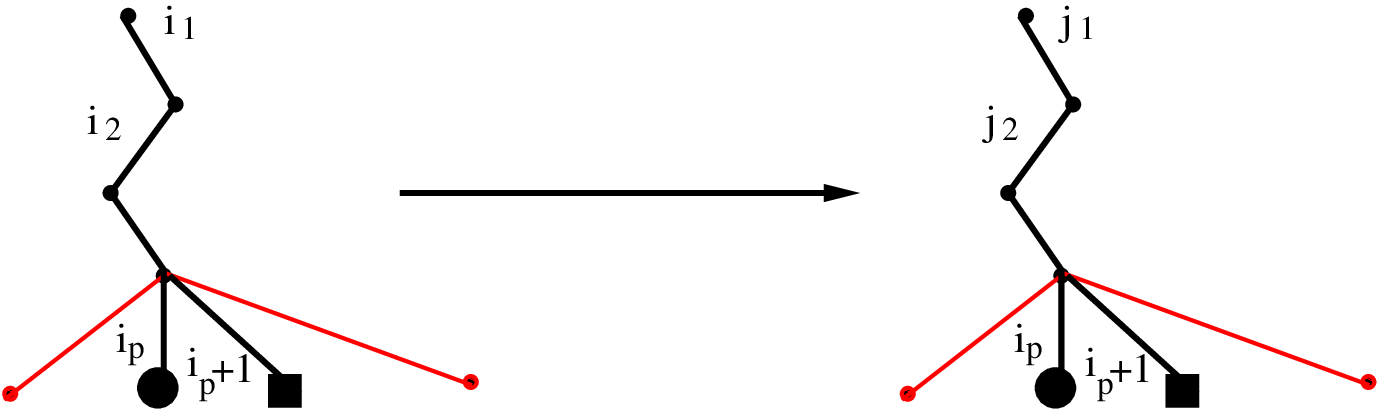}
\end{center}
%\end{figure}

Assume the contrary holds, namely that the squares sit on lower levels, 
as in the figure below:

%\begin{figure}\label{fig8}
\begin{center}
\includegraphics[width=0.5\textwidth]{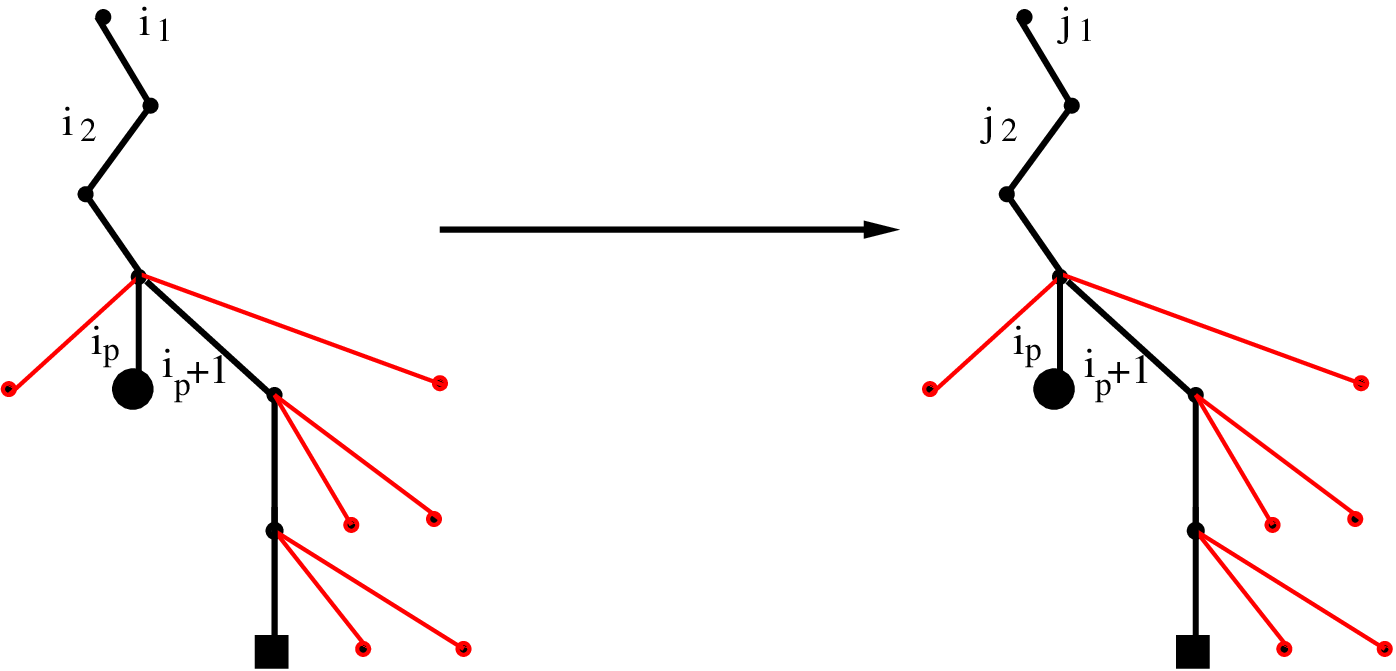}
\end{center}
%\end{figure}

Consider further continuation to the right of this 
increasing function. We label points on the next branch issued 
from the ancestor of squares vertices by triangles and further by 
hexagons etc. Consider further the highest levels for which continuation 
is compatible. Then the  picture
%\begin{figure}\label{fig9}
\begin{center}
\includegraphics[width=0.17\textwidth]{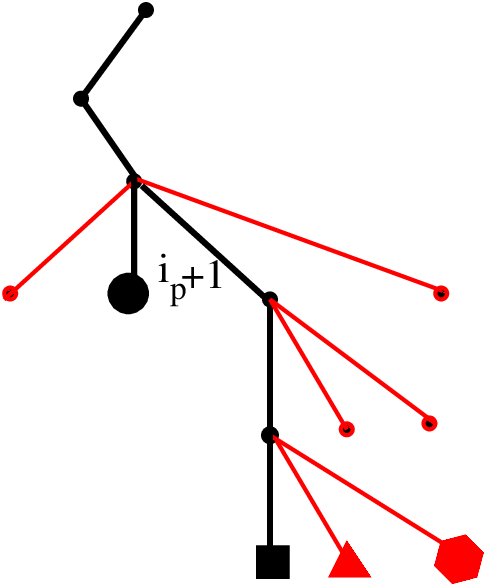}
\end{center}
%\end{figure}
is impossible, since then the ancestor of the 
square vertex also should  have been labeled by a square. 
Therefore we must continue along an infinite path down to a 
boundary point of the tree, as in the figure: 

%\begin{figure}\label{fig10}
\begin{center}
\includegraphics[width=0.2\textwidth]{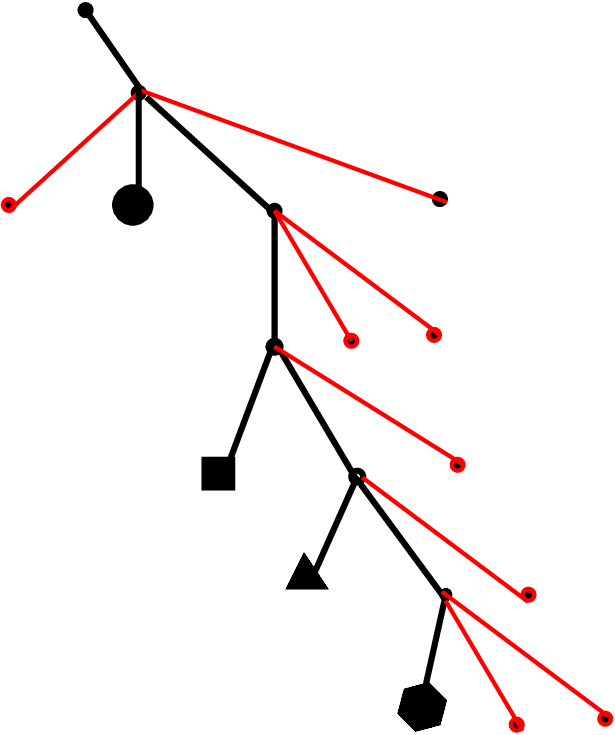}
\end{center}
%\end{figure}

The boundary point corresponds to an infinite multi-index 
$I$. Then $\xi=\xi(I)\in[0,1]$ cannot be a right point of the Cantor set, since 
this would give a continuation to a whole subtree issued from 
$v_{I'}$, contradicting the form of our path.  

Now our continuation coincides with the multi-germ $\{\psi_j\}_{j=1,m}$ 
for  values $x\in[a_j,\xi]$. Since $\xi$ is not a right point,
they coincide in a right semi-neighborhood of $\xi$ 
and this contradicts the choice of our infinite path.

We summarize the discussion above as follows. Let $k_r<n$;  
then the only possible right continuation  (which satisfies the condition 
(\ref{eq:small-1})) of  
$\psi_{Ik_1\ldots k_r,Jk_1\ldots k_r}$ is by the germ 
$\psi_{Ik_1\ldots k_{r-1}k_{r}+1,Jk_1\ldots k_{r-1}k_r+1}$. A similar argument shows that  whenever $k_r >0$ 
the only possible left continuation  (which satisfies the condition 
(\ref{eq:small-1})) of  $\psi_{Ik_1\ldots k_r,Jk_1\ldots k_r}$ is by the germ 
$\psi_{Ik_1\ldots k_{r-1}k_{r}-1,Jk_1\ldots k_{r-1}k_r-1}$.

Repeating the same argument, we get the desired statement.
\end{proof}

\begin{lemma}
There exists $\epsilon>0$  with the following property.
Consider a   standard  germ
$\psi_{I,J}$ with $i_p\neq j_q$ and $j_q\neq n\neq i_p$.

Then any  continuation of $\psi_{I,J}$ to 
a standard germ $\theta$ sending $L(i_1i_2\ldots i_{p-1}i_{p}+1)$ to 
$L(j_1j_2\ldots j_{q-1}j_{q}+1)$ which is defined in a right semi-neighborhood of $L(i_1i_2\dots i_{p-1}i_{p}+1)$  
is either an extension of the standard germ $\psi_{I,J}$, or else it verifies:
 \[
 \left|\frac {\psi_{I,J}'}{\theta'}-1\right| > \epsilon.
 \]  
\end{lemma}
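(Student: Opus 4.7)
My plan is to enumerate the possible continuations $\theta$, compute the slope ratio $\psi'_{I,J}/\theta'$ explicitly, and observe that this ratio ranges over a finite union of discrete cosets of the multiplicative subgroup $\lambda_1^{\Z}\subset \R^*_+$ generated by the leftmost contraction factor.

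First, I would parametrize $\theta$. Writing $\theta=\psi_{I',J'}$, the condition that $\theta$ be defined in a right semi-neighborhood of $L(i_1\ldots i_{p-1}(i_p+1))$ and send it to $L(j_1\ldots j_{q-1}(j_q+1))$, combined with the fact that two finite multi-indices define the same left point exactly when one is obtained from the other by appending a string of the leftmost letter $1$, forces
\[
I' \;=\; i_1 i_2 \ldots i_{p-1}(i_p+1)\cdot 1^{a}, \qquad J' \;=\; j_1 j_2 \ldots j_{q-1}(j_q+1)\cdot 1^{b},
\]
for some non-negative integers $a,b$. (If instead $\theta$'s domain contains our point only as an interior point, one restricts to the standard subinterval having it as left endpoint, which preserves the slope.) With $\Lambda_K=\prod_m \lambda_{k_m}$, a direct computation then yields
\[
\frac{\psi'_{I,J}}{\theta'} \;=\; \frac{\Lambda_J\,\Lambda_{I'}}{\Lambda_I\,\Lambda_{J'}} \;=\; \lambda_1^{\,a-b}\cdot c(i_p,j_q), \qquad c(i_p,j_q) \;:=\; \frac{\lambda_{j_q}\,\lambda_{i_p+1}}{\lambda_{i_p}\,\lambda_{j_q+1}}.
\]

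The constant $c(i_p,j_q)$ depends only on the terminal letters and, under the hypotheses $i_p\ne j_q$ and $i_p,j_q\ne n$, takes only finitely many values. Hence $\psi'_{I,J}/\theta'$ always lies in one of finitely many cosets of $\lambda_1^{\Z}$. Since $\lambda_1\ne 1$, each such coset is discrete, so its distance from $1$ is either zero (when $1$ belongs to the coset) or bounded below by some strictly positive constant $d(i_p,j_q)$; and when $1$ does lie in the coset, the nearest other element is at distance at least $|1-\lambda_1|>0$. Taking $\epsilon$ to be the minimum of all these positive distances over the finitely many $(i_p,j_q)$ yields a uniform constant, and the desired dichotomy follows: either $\psi'_{I,J}=\theta'$, in which case matching slopes together with the matching value at $L(i_1\ldots i_{p-1}(i_p+1))$ force $\psi_{I,J}$ and $\theta$ to coincide as affine functions of $\R$ (the ``extension'' alternative), or else $|\psi'_{I,J}/\theta'-1|>\epsilon$.

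The subtle point is interpreting ``extension'' correctly. Since the domains of $\psi_{I,J}$ and $\theta$ are separated by a gap, no strict extension of $\psi_{I,J}$ as a standard germ can occur under the hypothesis $i_p\ne j_q$; the natural weakening used here is equality of the underlying affine maps of $\R$, which is exactly equivalent to the slope condition $\psi'_{I,J}=\theta'$ given the pre-specified matching value. The uniformity of $\epsilon$ is what the discreteness of $\lambda_1^{\Z}$ combined with the finite range of $(i_p,j_q)$ delivers — without this structural constraint, the $\epsilon$ would depend on the lengths of $I,J$ and fail to be uniform.
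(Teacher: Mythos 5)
Your proposal follows essentially the same route as the paper: both parametrize the possible continuations, compute the slope ratio as the fixed constant $\lambda_{j_q}\lambda_{i_p+1}/(\lambda_{i_p}\lambda_{j_q+1})$ times an integer power of $\lambda_1$, and conclude from the discreteness of this set in $\R^*$ (the paper's displayed formula, which writes $\lambda_{i_q}$ where $\lambda_{j_q}$ is meant, is the same expression). The only place you over-claim is the parenthetical about the ``extension'' alternative: equality of slopes together with $\theta(L(i_1\ldots i_{p-1}(i_p{+}1)))=L(j_1\ldots j_{q-1}(j_q{+}1))$ does not by itself force $\psi_{I,J}$ and $\theta$ to be restrictions of a single affine map, since the two germs have no common point of definition; one additionally needs the source and target gaps separating their domains to be in the ratio of the common slope, i.e. $g_{j_q}/g_{i_p}=\lambda_{j_q}/\lambda_{i_p}$. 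This is really a looseness in the statement's first alternative rather than in your main argument, and the paper's own one-line proof does not address it either; what is actually used downstream is exactly the discreteness dichotomy you establish.
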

Notice that $\theta$ is locally affine and hence 
we don't need to specify the point (of the corresponding domain of definition) in which we consider the derivative. 
\begin{proof}
The ratio of the derivatives of the standard germs  
$\psi_{I,J}$ and $\theta=
\psi_{i_1i_2\ldots i_{p-1}i_{p}+1, \; j_1j_2\ldots j_{q-1}j_{q}+1}$
is given by: 
\[\frac {\psi'}{\theta'}=\frac
{\lambda_{i_p}^{-1}\lambda_{i_q}}
{\lambda_{i_p+1}^{-1}\lambda_{i_q+1}} \lambda_1^m, 
\]
where $m\in\Z$. This is a discrete subset of $\R^*$ and hence the claim.
\end{proof}

We can apply the same arguments when $i_p\neq 0\neq j_q$.
Specifically, we have: 
\begin{lemma}
Let $n\ge 2$. Then there exists $\epsilon>0$ such that any 
multi-germ $\{\psi_j\}_{j=1,m}$ with the property: 
 \[
\left|\frac{\psi_i'}{\psi_j'}-1\right|<\epsilon
 \]
 admits an extension containing with at most two elements. 
\end{lemma}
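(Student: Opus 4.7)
The plan is to iteratively apply the two preceding lemmas until the multi-germ stabilizes, and then to count terminal elements. First, I would fix $\epsilon > 0$ small enough so that the multi-germ satisfies simultaneously the hypotheses of both earlier lemmas: the ratio condition $\mu/\nu > 1/\max_i \lambda_i$ from the first lemma (where $\mu,\nu$ bound the derivatives of all $\psi_j$) and the gap threshold from the second lemma.

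Next, I would inspect consecutive pairs $(\psi_j,\psi_{j+1})$ in the chain. Their geometric positioning, gap-adjacent in $C$, matches the setup of the second lemma. Under the $\epsilon$-tolerance, one of two things must occur: either the last indices of $\psi_j$'s source and target coincide (so $\psi_j$ admits a standard-germ extension), or one of those last indices equals $n$ or $0$, which places the pair outside the second lemma's excluded-index regime (using an analogous left-sided version for the index $0$). In the first case, the first lemma delivers an extension $\chi$ of $\psi_j$ together with an extension of the whole multi-germ containing $\chi$, so $\psi_j$ and at least $\psi_{j+1}$ are absorbed into a single larger element. Iterating this merge, each step either strictly reduces the count of elements or lifts their domains upward in $\mathcal{T}$; since the initial depth is bounded and every lift is monotone, the procedure terminates at a multi-germ $\{\Psi_1,\ldots,\Psi_r\}$ in which every $\Psi_i=\psi_{I_i,J_i}$ has $i_p\neq j_q$ and carries extreme last indices blocking any further continuation on both sides.

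The main obstacle is to show that $r\le 2$, and this is where $n\ge 3$ enters critically. Non-extendability forces each $\Psi_i$ into a rigid ``corner-to-corner'' configuration within its ancestor subtree: informally, its source abuts the rightmost (resp.\ leftmost) descendant of some vertex while its target abuts the leftmost (resp.\ rightmost). The linear order on $[0,1]$ allows at most one such terminal element covering the leftmost region of the multi-germ's domain and one covering the rightmost region. If a third, interior, terminal element were present, the branching $n+1\ge 4$ at its ancestor would supply siblings into which the interior element together with one of its neighbors could be merged via a further application of the first lemma, contradicting termination. Turning this intuition into rigorous combinatorial bookkeeping on the $(n+2)$-valent tree, together with formulating the analogous leftward version of the second lemma, is the delicate part; the restriction $n\ge 3$ is precisely what rules out the low-valence configurations in which three or more corner-to-corner terminal elements could coexist.
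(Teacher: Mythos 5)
There is a genuine gap, and it sits exactly where the hypothesis $n\ge 3$ does its work. Your reduction (iterate the first lemma to merge whenever a germ admits an extension, and invoke the second lemma and its left-sided analogue to block continuations when $i_p\neq j_q$ with $j_q\neq n\neq i_p$ or $i_p\neq 1\neq j_q$) reproduces the setup, but it leaves unexamined the residual corner cases $(i_p,j_q)=(1,n)$ and $(n,1)$, and for those the dichotomy you are implicitly relying on — "either the continuation is an extension or its derivative ratio jumps by more than $\epsilon$" — is \emph{false}. In the paper's computation the ratio of derivatives of a continuation in the corner case has the form $\frac{\lambda_k\lambda_2}{\lambda_{k+1}}\cdot\frac{\lambda_n^{s}}{\lambda_1^{r-m+1}}$, and by taking $s$ and $r-m+1$ large this can be made arbitrarily close to $1$; so there do exist $\epsilon$-compatible continuations that are not extensions, and your terminal configuration is not actually terminal. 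The paper's argument then exploits that such a near-identity continuation forces $r-m+1>0$, which triggers a further automatic extension landing the two germs on adjacent branches of a common ancestor; only at that point does $n\ge 3$ enter, to rule out any non-trivial extension of \emph{that} pair, which is what caps the multi-germ at two elements.

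Your own invocation of $n\ge 3$ — that a hypothetical third interior terminal element would have siblings at its ancestor "into which it could be merged via a further application of the first lemma" — is not a valid use of the first lemma: that lemma requires the standard germ to already admit an extension (i.e.\ $i_p=j_q$), and the mere existence of sibling branches does not produce one. So both the termination of your iteration and the final count of two rest on claims that are either unproved or contradicted by the corner-case derivative computation; the "delicate part" you defer is in fact the entire content of the lemma.
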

\begin{proof}
It remains to examine the standard germs 
$\psi_{I,J}$ in following two cases: 
\[(I,J)\in\{(i_1\ldots i_{p-1} 0, \;  J=j_1\dots j_{q-1} n), \;
(i_1\ldots i_{p-1} n, \; j_1\dots j_{q-1} 0)\}.
\]
The corresponding picture depends on 
the number $s$ of occurrences of $n$ in the tail of  
$j_1\dots j_{q-1} n$ and the positions of the the square vertices 
(having $r$ and $m$ respectively ancestors labeled $0$) as below:

%\begin{figure}\label{fig14}
\begin{center}
\includegraphics[width=0.5\textwidth]{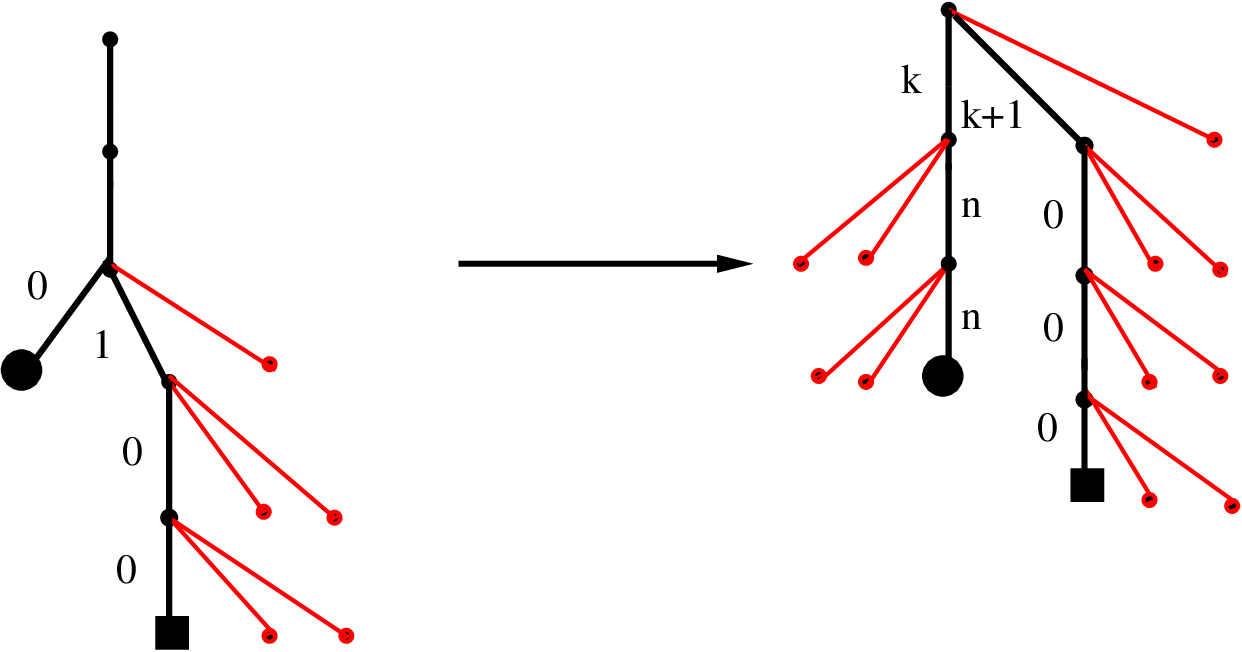}
\end{center}
%\end{figure}

The ratio of derivatives is
\[
\frac{\lambda_n^s\lambda_k\lambda_{k+1}^{-1}\lambda_0^{-r}}
{\lambda_0\lambda_1^{-1}\lambda_0^{-m} }=
\frac{\lambda_k\lambda_1}{\lambda_{k+1}}\cdot\frac{\lambda_n^{s}}{\lambda_0^{r-m+1}}
\]
Letting $s$ and $\mu=r-m+1$ be large enough we can insure 
that  $\lambda_n^s/\lambda_0^{\mu}$ is arbitrarily 
close to $\lambda_{k+1}/\lambda_k\lambda_1$. 
In this case $\mu >0$,  so that we 
can automatically extend the new standard germ  obtained this 
way and get the figure below, where the position of the squared
vertex is the highest possible: 

%\begin{figure}\label{fig14}
\begin{center}\label{fig15}
\includegraphics[width=0.5\textwidth]{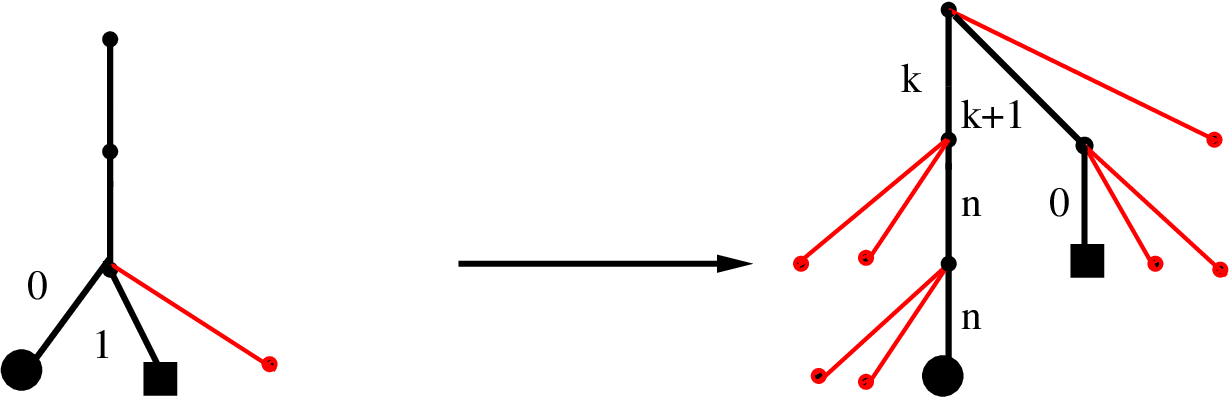}
\end{center}
%\end{figure}

Now, as $n\geq 2$ we cannot find a non-trivial extension of 
the two standard germs corresponding to the labeled vertices.
This means that there is an extension with at most two elements, thereby 
proving our statement. 
\end{proof}

\begin{lemma}
 Let $n=1$. Then there exists $\epsilon>0$ such that any multi-germ $\{\psi_j\}_{j=1,m}$ verifying the condition:  
 \[
\left|\frac{\psi_i'}{\psi_j'}-1\right|<1+\epsilon
 \]
 admits an extension containing  at most $4$ elements. 
\end{lemma}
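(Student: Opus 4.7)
The plan is to mimic the structure of the proof of the previous lemma (the $n\geq 3$ case) but to keep careful track of the combinatorial restrictions imposed by having only two labels available. As before, the problematic standard germs $\psi_{I,J}$ are those with $i_p\neq j_q$ and the tails involving extremal labels; since $n=2$ the only admissible pairs are $(I,J)=(i_1\ldots i_{p-1}\,1,\; j_1\ldots j_{q-1}\,2)$ and $(I,J)=(i_1\ldots i_{p-1}\,2,\; j_1\ldots j_{q-1}\,1)$. I would first invoke the preceding two lemmas to reduce the analysis to these two cases: inside a chain, any standard germ whose tails are neither $(1,n)$ nor $(n,1)$ can be extended along with its neighbours in the canonical way.

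Next, I would redo the derivative ratio computation from the $n\geq 3$ argument, specialising to $n=2$ and $k=1$ (the only possible middle label). The ratio reduces to $\lambda_2^{s}/\lambda_1^{r-m}$, up to the fixed factor $\lambda_k\lambda_2/\lambda_{k+1}=\lambda_1$. By choosing $s$ and $\mu=r-m+1$ appropriately large, this can be made arbitrarily close to $1$, so for a suitable $\epsilon>0$ the chain condition $|\psi_i'/\psi_j'-1|<1+\epsilon$ forces a particular highest-possible level of the square vertex, exactly as in the figure just before the lemma statement. This produces an extension of the offending standard germ by one or two standard germs on the side where the tail discrepancy occurs.

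The reason the bound is $4$ rather than $2$ is that, in contrast to $n\geq 3$, both the left boundary label $1$ and the right boundary label $2$ are extremal in $n=2$, so the two ``bad tail'' patterns $(1,2)$ and $(2,1)$ can both appear in the chain $\{\psi_j\}$, once at the left end and once at the right end of an interval block. Each occurrence produces an extension with at most two standard germs (by the same argument as in the $n\geq 3$ picture, using that now $n=2$ means no further extension across the ancestor vertex is compatible with the local affine structure). Summing, we get at most $2+2=4$ standard germs, hence an extension with at most $4$ elements.

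The step I expect to be the main obstacle is checking that after producing the left-side and right-side continuations we genuinely cannot merge or further extend any of the four resulting standard germs: one must verify that the incompatibility of extensions at both ancestor vertices is independent, and that the constant $\epsilon$ can be chosen uniformly across the finitely many relevant combinatorial configurations (parameters $r,m,s,\mu$ and the positions of squares/triangles in the analogues of figures 10 and 11). This is essentially a careful case analysis of how the ratio $\lambda_2^s/\lambda_1^{r-m}$ can fail to equal $1$, using that $\lambda_1,\lambda_2$ generate a non-dense subgroup of $\R^*$ under the incommensurability hypothesis of condition~(C).
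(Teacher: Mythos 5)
Your overall strategy matches the paper's: specialize the derivative-ratio computation of the $n\geq 3$ case to $n=2$, use the discreteness of the multiplicative subgroup generated by $\lambda_1,\lambda_2$ (condition (C)), and bound the combinatorics of continuations in the tree. But the step you flag at the end as ``the main obstacle'' is not a technical verification to be deferred --- it is the entire content of the lemma, and your proposal does not supply it. The paper's argument runs as follows: for the single problematic configuration, the ratio of derivatives of the germ and its candidate right continuation has the form $\lambda_1^{s-v}/\lambda_2^{r+1}$; to make this ratio lie within $\epsilon$ of $1$ the exponent $s-v$ must be large and \emph{positive}; positivity of $s-v$ forces the combinatorial pattern to extend automatically to a larger standard germ of a specific shape; and that shape visibly admits no further right continuation. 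It is this chain (closeness $\Rightarrow$ positive exponent $\Rightarrow$ automatic extension $\Rightarrow$ no further continuation) that terminates the process and yields the bound. Without it, nothing prevents the continuation procedure from running indefinitely, and no finite bound follows.

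Your accounting for the constant $4$ is also not the paper's. You propose $2+2$ from two independent ``bad tail'' patterns $(1,2)$ and $(2,1)$ occurring at the two ends of an interval block, each contributing at most two germs. The paper instead analyzes a single configuration, shows the forced automatic extension produces a bounded picture, and reads off the bound from that terminal configuration; there is no summation over two independent occurrences. Since you have not verified that the two contributions are independent, cannot merge, and cannot each exceed two elements, the $2+2=4$ count is a conjecture rather than a conclusion. In short: right framework, but the decisive closing argument --- why the forced positivity of the exponent makes further continuation impossible --- is missing, and the proposal as written does not prove the statement.
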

\begin{proof}
The only possible situation is that pictured below:

%\begin{figure}\label{fig16}
\begin{center}
\includegraphics[width=0.4\textwidth]{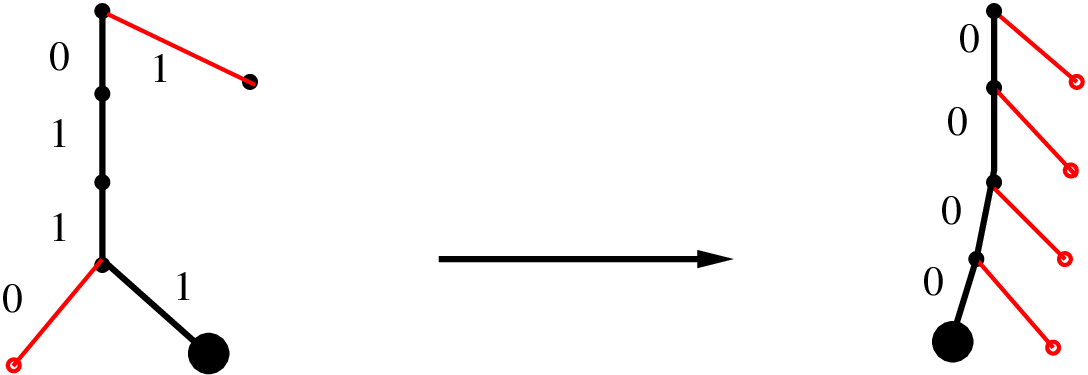}
\end{center}
%\end{figure}

Consider a right continuation as follows:

%\begin{figure}\label{fig17}
\begin{center}
\includegraphics[width=0.4\textwidth]{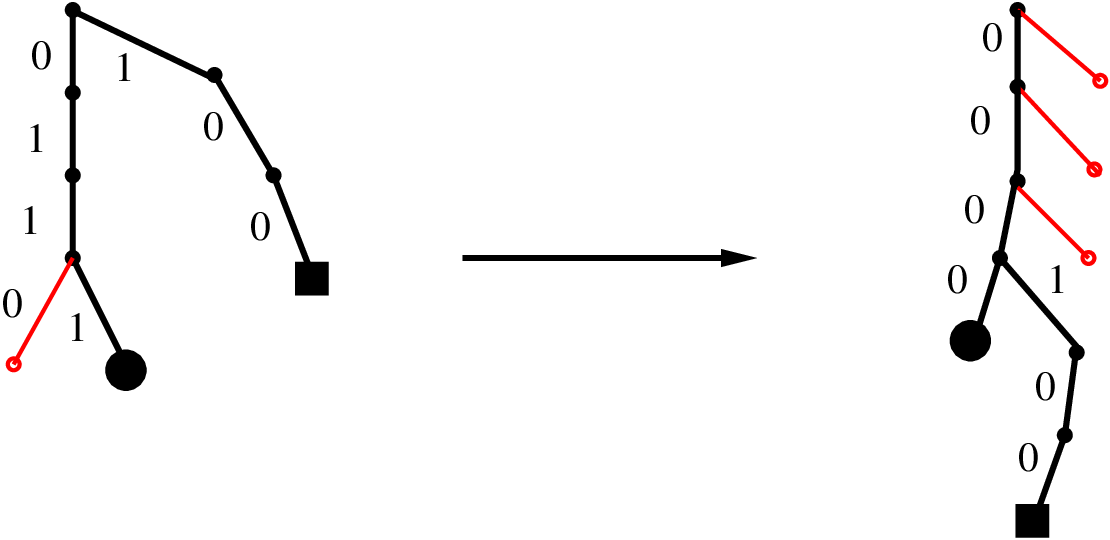}
\end{center}
%\end{figure}
In the left hand side picture we have $r+1$ 
ancestors of the fat dotted vertex which are labeled $1$ and 
$s$ ancestors of the square vertex labeled $0$, while in the 
right picture there are $v$ ancestors of the square vertex labeled $0$.  
Then the ratio of derivatives of the two standard germs is: 
\[
\frac{\lambda_1^{-r-1}\lambda_1\lambda_0^s}
{\lambda_1\lambda_0^v}=\frac{\lambda_0^{s-v}}{\lambda_1^{r+1}}
\]
We can approximate arbitrarily close $1$ by 
$\lambda_0^{s-v}/\lambda_1^{r+1}$, but then $s-v$ must be large, and in particular positive. This implies that we can automatically extend 
this to a standard germ as follows:

%\begin{figure}\label{fig18}
\begin{center}
\includegraphics[width=0.4\textwidth]{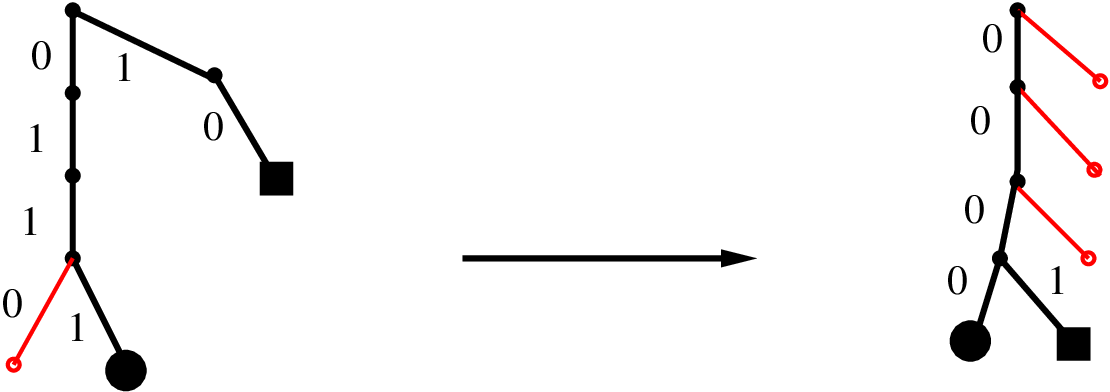}
\end{center}
%\end{figure}

or, after removing nonessential information:

%\begin{figure}\label{fig19}
\begin{center}
\includegraphics[width=0.33\textwidth]{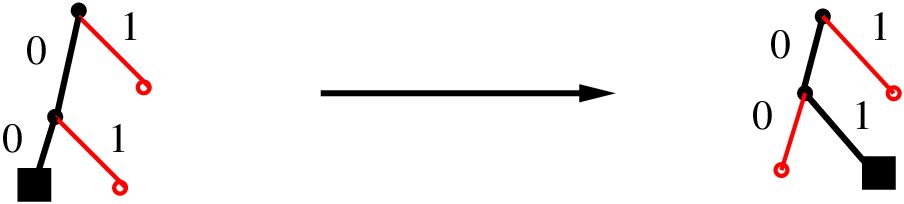}
\end{center}
%\end{figure}

And we now see that a right continuation is impossible. Thus we get our claim. 
\end{proof}

\subsubsection{Cantor sets with commensurable parameters}\label{prooffiniteness1} 
The genericity condition (C) could be extended to include also the case when all homothety factors $\lambda_i$ are commensurable. We skip the details and present instead an example of an asymmetric Cantor set 
$AC$ which is the attractor of the IFS: 
\[ \phi_0(x)=\frac{1}{4}x, \; \phi_1(x)=\frac{1}{2}x+\frac{1}{2}.\]
For each finite multi-index $I=i_1i_2\ldots i_k$, with $i_j\in\{0,1\}$  we 
set $l_{\emptyset}=0$ and define by induction:
\[l_{0I}=\frac14 l_{I}, \; l_{1I}=\frac 12 l_I +\frac12.\]
%Thus $l_1=\frac12$, $l_{01}=\frac18$, $l_{11}=\frac34$.
%By induction one proves that: 
%\[ l_{i_1i_2\ldots i_k}=\sum_{1\leq s\leq k}i_s \cdot 4^{-\sum_{j=1}^s (1-i_j)}\cdot 
%2^{-\sum_{j=s}^k i_j}=
%\sum_{1\leq s\leq k}i_s \cdot 4^{-\sum_{\alpha=1}^s \delta_{0i_{\alpha}}}\cdot 
%2^{-\sum_{\beta=s}^k i_{\beta}}.
%\] 
Then $L(AC)=\{l_{I}; I\;  
{\rm finite \;  and \; admissible}\}$, where $I=i_1i_2\ldots i_k$ is admissible if it is either empty or else $i_k=1$. 
If we set 
$ l_I =\lim_{k\to \infty}l_{i_1i_2\ldots i_k}$ for infinite $I$, then $AC$ is the union of 
$L(AC)$ and the set of $l_I$, with infinite $I$. Further, $L(AC)$  is identified with the set of those 
$l_I$ for which $I$ is infinite and eventually $0$.  It follows as in the case of $C_{\lambda}$ that $\chi(\Diff^1_0(AC))=\langle 4\rangle$. 
Further, for $a,b\in L(AC)$,  we obtain:  
\begin{equation}
D(a,b)=\left\{\psi_{a,b,k}= b+\frac{1}{2^{n(a,b)}}4^{-k}(x-a), k\in \Z\right\},
\end{equation}
where $n(a,b)\in\{0,1\}$ is the parity of the length 
of the geodesic joining $a$ to $b$ in the reduced binary tree associated to the IFS.

The previous arguments show that any element $\varphi$ of $\diff^{1,+}(AC)$ 
determines a {\em finite} covering of $AC$ by  
intervals $I_j$ on which $\varphi|_{I_j}$ is of the form 
$\psi_{a_j,b_j,k_j}$, for some $a_j\in L(AC)$. Moreover $\diff^{1,+}(AC)$ is isomorphic to the Thompson group $F$.

\subsection{Proof of Theorem \ref{highthomp}}
Let $\Diff^1_{a}(\R^n,C)$ denote the stabilizer of 
$a\in C$ in the group  $\Diff^1(\R^n,C)$.
We verify immediately that the 
map $\chi: \Diff^1_{a}(\R^n,C)\to GL(n,\R)$, given by 
$\chi(\varphi)=D_a\varphi$ is a homomorphism. 
In the case when $C$ is a product we can describe
explicitly $\chi(\Diff^1_{a}(\R^n,C))$. 
For the sake of simplicity we restrict ourselves to the 
case $n=2$. Consider $C=C_{\lambda_1}\times C_{\lambda_2}$. 
We say that $a=(a_1,a_2)\in C$ is an {\em end} point of $C$ if both 
$a_i$ are endpoints of $C_{\lambda_i}$.

\begin{lemma}
Suppose that $\lambda_i >2$ and $a$ is an end point of $C$. 
\begin{enumerate}
\item If  $\lambda_1\neq \lambda_2$ then 
\begin{equation}
 \chi(\Diff^{1}_{a}(\R^2,C)) =\langle \lambda_1\rangle \oplus 
\langle \lambda_2\rangle. 
\end{equation}
\item If $\lambda_1=\lambda_2=\lambda$ then 
\begin{equation}
\chi(\Diff^{1,+}_{a}(\R^2,C))= \langle \lambda\rangle \oplus 
\langle \lambda\rangle. 
\end{equation}
\end{enumerate}
\end{lemma}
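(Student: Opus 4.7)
The plan is to establish both inclusions, handling the two cases in parallel where possible.

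First, for the inclusion $\supseteq$, I would invoke Lemma \ref{chi}: for each $i\in\{1,2\}$ there is $\phi_i\in\Diff^1_0(\R,C_{\lambda_i})$ with $\phi_i'(0)=\lambda_i$. Then $\Phi_1=\phi_1\times\mathrm{id}_\R$ and $\Phi_2=\mathrm{id}_\R\times\phi_2$ are $\mathcal{C}^1$ diffeomorphisms of $\R^2$ preserving $C=C_{\lambda_1}\times C_{\lambda_2}$, fixing $a=(0,0)$, with differentials $\operatorname{diag}(\lambda_1,1)$ and $\operatorname{diag}(1,\lambda_2)$. Both have positive determinant, so this works simultaneously for $\Diff^1_a$ and $\Diff^{1,+}_a$, and gives generators of $\langle\lambda_1\rangle\oplus\langle\lambda_2\rangle$ inside the image of $\chi$.

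For the reverse inclusion I would first analyze the tangent spread at $a$. Assuming without loss of generality that $a$ is the ``bottom-left'' corner (the argument for other corners is analogous by symmetry), points of $C$ near $a$ lie in the closed positive quadrant, so $UT_aC$ sits in the quarter-arc of the unit circle. Taking points on the slices $C_{\lambda_1}\times\{0\}$ and $\{0\}\times C_{\lambda_2}$ accumulating to $a$ shows that $UT_aC$ contains the axis directions $(1,0)$ and $(0,1)$; hence $\mathrm{Hull}(UT_aC)$ is the whole positive quadrant cone. The map $D_a\varphi$ must preserve this cone, so by an elementary classification it is either diagonal with positive entries or anti-diagonal with positive entries (i.e.\ a coordinate swap composed with such a diagonal map).

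Next I would exclude the anti-diagonal case. In case (1), an anti-diagonal $D_a\varphi$ sending $e_1$ to $\mu_1 e_2$ would make the germ $\tilde\psi(x):=\pi_2\bigl(\varphi(x,0)\bigr)$ a $\mathcal{C}^1$ local diffeomorphism of $\R$ near $0$ (since $\tilde\psi'(0)=\mu_1>0$) sending a neighborhood of $0$ in $C_{\lambda_1}$ into $C_{\lambda_2}$, as $\varphi$ maps $C\cap V$ into $C$ for a small neighborhood $V$ of $a$. Being locally bi-Lipschitz, $\tilde\psi$ would preserve the local Hausdorff dimension; self-similarity of the $C_{\lambda_i}$ then contradicts Remark \ref{Hausdorff} when $\lambda_1\neq\lambda_2$. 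In case (2), the anti-diagonal has determinant $-\mu_1\mu_2<0$, hence is orientation reversing, so it is excluded by hypothesis. Thus $D_a\varphi=\operatorname{diag}(\mu_1,\mu_2)$ with $\mu_1,\mu_2>0$.

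Finally, to pin down the $\mu_i$, I would set $\psi_1(x):=\pi_1\bigl(\varphi(x,0)\bigr)$ and $\psi_2(x):=\pi_2\bigl(\varphi(0,x)\bigr)$. Diagonality gives $\psi_i'(0)=\mu_i>0$, so each $\psi_i$ is a $\mathcal{C}^1$ local diffeomorphism of $\R$ fixing $0$ and sending a neighborhood of $0$ in $C_{\lambda_i}$ into $C_{\lambda_i}$, hence represents a germ in the stabilizer of $0$ in $\Diff^1(\R,C_{\lambda_i})$. Lemma \ref{chi}, whose gap-ratio proof is purely local at $0$, then gives $\mu_i\in\langle\lambda_i\rangle$. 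The main obstacle is the anti-diagonal sub-case of (1): one must carefully check that $\tilde\psi$ defines a genuine local diffeomorphism germ between the two distinct Cantor sets, and that Hausdorff-dimension invariance applies germ-wise (which holds because each $C_{\lambda_i}$ is self-similar, so the local Hausdorff dimension at $0$ agrees with the global one).
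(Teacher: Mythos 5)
Your proposal is correct and follows essentially the same route as the paper: use the corner-type tangent spread at $a$ to force $D_a\varphi$ to permute the positive coordinate axes, exclude the axis swap via the Hausdorff-dimension argument of Remark \ref{Hausdorff} when $\lambda_1\neq\lambda_2$ (and via orientation when $\lambda_1=\lambda_2$, which is equivalent to the paper's observation that the swap $R_a$ lies in $\Diff^1_a$ but has determinant $-1$), and then reduce the diagonal entries to the one-dimensional stabilizer computation of Lemma \ref{chi}. Your explicit product construction for the inclusion $\supseteq$ and your remark that the gap-ratio argument of Lemma \ref{chi} is purely local (so it applies to the restricted germs) are small but welcome additions that the paper leaves implicit.
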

\begin{proof}
From the first part of the proof of Theorem \ref{products} we infer that 
whenever $C$ is a product and $a\in C$ is fixed by $\varphi$ 
its differential $D_a\varphi$ must send 
both horizontal and vertical vectors 
into horizontal or vertical vectors. 

Moreover, when $\lambda_i$ are distinct the horizontality/verticality 
of the segment should be preserved. Otherwise $\varphi$ 
would induce a germ of $\mathcal C^1$-diffeomorphism 
$\phi:(\R,C_{\lambda_1})\to (\R, C_{\lambda_2})$. By Remark \ref{Hausdorff}  
we need $\lambda_1=\lambda_2$.

Therefore $\varphi$ restricts to germs of diffeomorphisms 
$\phi_i\in \Diff^{1}_{a_i}(\R,C_{\lambda_i})$. By Lemma \ref{chi} and Remark \ref{decrease} 
$\chi(\phi_i)=\langle\lambda_i\rangle$. 
This proves  the first item.  

On the other hand when $\lambda_1=\lambda_2$  we can locally identify $(C_{\lambda_1},a_1)$ and 
$(C_{\lambda_2},a_2)$ by an affine germ. The linear map $R_a=\left(\begin{array}{cc}
0 & 1 \\
1 & 0 \\
\end{array}\right)$  
which exchanges the two orthogonal axes meeting at $a\in C$ belongs  to 
$\Diff^{1}_a(\R^2,C)$.  We can compose $\varphi$ with $R_a$, if needed,  in order to 
have  $D_a\varphi$ diagonal. 
Thus $\chi(\Diff^{1}_{a}(\R^2,C))=\left\langle \langle \lambda\rangle \oplus 
\langle \lambda\rangle, R_a\right\rangle$. Taking into account that  $\det(R_a)=-1$, so $R_a$ is orientation reversing, we  
obtain  the claim. 

Observe that $\diff^{1}_{a, \R^2}(C)$ is either 
isomorphic to $\Z^2$, when $\lambda_i$ are distinct, or 
an extension of $\Z^2$ by  $\Z/2\Z$, 
otherwise. 
 \end{proof}

Consider now that $\lambda_1=\lambda_2$. 
Let now $a$ and $b$ be two end points of $C$. Denote by $D(a,b)$ 
the set of germs at $a$ of classes in $\diff^{1}_{\R^2}(C)$ having representatives 
$\varphi\in\Diff^{1}(\R^2,C)$ such that $\varphi(a)=b$. This set is acted upon  
transitively by $\diff^{1,+}_{a, \R^2}(C)$, so that $D(a,b)$ consists of 
maps  of  the form: 

\begin{equation}
 \psi_{a,b,\mathbf k, n}(x)=(b_{j,i}+ \lambda^{k_j}(x_i-a_{j,i}))_{i=1,2}\circ S_{a}^{n_b}, \qquad {\rm for \quad any }\quad x\in I_j\cap C. 
 \end{equation} 
where $S_{a}^{n_b}$ is an element of the group $D_2$ of orientation 
preserving symmetries of the square, namely $S_{a}$ is a rotation of 
order $4$ fixing $a$ and $n_b\in \{0,1,2,3\}$.

Now  the set of endpoints of $C$ is kept invariant 
by any $\varphi\in\Diff^{1}(\R^2,C)$. 
Therefore, for any endpoint 
$a\in C$ there exists some $k_i,n$ depending on $a$ such that  
$D_a\varphi= 
(\lambda^{k_1}\oplus \lambda^{k_2})\circ S_{a}^n$. The set of possible values 
of $D_a\varphi$ is then a discrete subset of $GL(2,\R)$.  
Since endpoints of $C$ are dense in $C$ and $D\varphi$ is continuous 
we have  $D_a\varphi$ is of the form $(\lambda^{k_1}\oplus \lambda^{k_2})\circ S_{a}^n$, for any $a\in C$ and any $\varphi\in \Diff^{1,+}(\R,C)$.  

For a given $\varphi$  both the norm  $\parallel D\varphi\parallel$ and the 
determinant  $\det(D\varphi)$ of its differential are continuous on  
 $[0,1]\times [0,1]$ and hence  they 
are bounded. Moreover,  the same argument 
for $\varphi^{-1}$ shows that these quantities are also bounded from below 
away from $0$, so that $D\varphi\Bigr|_{C}$ can only take 
finitely many values. 
The next point is the analogue of Lemma \ref{finiteness} 
to this situation: 

\begin{lemma}\label{finitenessproduct}
There is a covering of $C$ by a finite 
collection of disjoint standard rectangles $I_k$  whose images are standard rectangles such that
$\varphi\Bigr|_{C\cap I_k}$ is the restriction of an affine function 
and more precisely we have: 
\begin{equation}
\varphi(x)=(\lambda^{j_{k,1}}\oplus \lambda^{j_{k,2}})\circ S_{b_k}^{m_k}(x-(\alpha_1,\alpha_2))+\varphi(\alpha_1,\alpha_2), \; {\rm for }\;  x\in I_k\cap C, 
\end{equation}
where  $\alpha_i$ are left points of 
$C_i$.  
\end{lemma}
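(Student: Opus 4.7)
The plan is to follow the strategy of Lemma \ref{finiteness} with the necessary adaptations for the product structure and the additional dihedral symmetries. First I would exploit the stabilizer computation already in hand: at every $a\in C$ the differential $D_a\varphi$ has the form $(\lambda^{k_1(a)}\oplus\lambda^{k_2(a)})\circ S_a^{m(a)}$ with $k_i(a)\in\Z$ and $m(a)\in\{0,1,2,3\}$. Since $\varphi$ is defined on a compact square, $\|D\varphi\|$ and $\|D\varphi^{-1}\|$ are uniformly bounded, so these matrices lie in a finite subset of $GL(2,\R)$. Next, at each end point $c\in C$ the description of $D(c,\varphi(c))$ shows that some neighborhood $U_c\cap C$ is matched by $\varphi$ with an affine map $\psi_{c,\varphi(c),\mathbf k,n}$ of the stated form.

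The crux is the analogue of the extension argument in Lemma \ref{finiteness}. Given a sequence of end points $a_j\to a_\infty$ on which $\varphi$ agrees with affine maps $\psi_{a_j,b_j,\mathbf k_j,n_j}$, I would show that a single such affine map describes $\varphi$ on a whole neighborhood of $a_\infty$ in $C$. After composing with a homothety of factor $\lambda^{-k}$ and an element $S_a$ one reduces to the case $D_{a_\infty}\varphi=\mathbf 1$. On $[a_\infty-\delta,a_\infty+\delta]^2$ choose $\delta$ so small that $\|D_x\varphi-\mathbf 1\|<\varepsilon$. The new feature compared with the one-dimensional setting is that the gaps of $C$ form cross-shaped strips: inside each standard rectangle $R$ a horizontal and a vertical strip of width $(\lambda-2)\lambda^{-k}$ subdivide $R$ into four standard sub-rectangles of the next generation, and the ratios of widths of any two such gap strips lie in the discrete set $\langle\lambda\rangle\subset\R^*$. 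Extending the affine description to the next sub-rectangle requires matching a gap strip of width $w$ to one of width in $\langle\lambda\rangle\cdot w$; for $\varepsilon<1-\lambda^{-1}$ the only admissible choice is equality of widths, which produces a coherent affine extension across the strip. Iterating shows that the affine extension survives up to $a_\infty$.

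Once $\varphi$ is known to be affine on $U\cap C$ for some open $U$ containing $a_\infty$, I would argue (following the pattern of Lemma \ref{standard}) that the smallest standard rectangle $I$ containing $U\cap C$ satisfies $\varphi(I\cap C)=J\cap C$ for some standard rectangle $J$. Indeed, the image of the largest gap strips of $I$ under an affine map must be the largest gap strips of $\varphi(I\cap C)$, and the incommensurability of $\lambda$-powers with the gap sizes forces the image rectangle to fit exactly on the standard dyadic-type grid of $C_\lambda\times C_\lambda$, which also pins down the $D_2$-symmetry $S_{b_k}^{m_k}$. Compactness of $C$ then extracts a finite disjoint subcovering of standard rectangles.

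The main obstacle is the two-dimensional extension step across gap strips: one needs to see that the small perturbation hypothesis on $D\varphi$ forces the \emph{cross-shaped} gap pattern to be mapped rigidly onto a cross of matching sizes, preserving both the horizontal and vertical matchings simultaneously. The one-dimensional argument applies separately in each coordinate direction once one uses the product structure of the gap strips and the independence of the two factors $\lambda^{k_1},\lambda^{k_2}$; the discreteness of the possible values of $D\varphi$ obtained in Step 1 makes the resulting alignment rigid. After that, the identification with $nV^{sym}$ follows by encoding a covering by standard rectangles as a numbered dyadic pattern of $[0,1]^2$ and recording $(\mathbf k_j,S_{b_k}^{m_k})$, exactly as in the passage from Lemma \ref{finiteness} to the isomorphism $\diff^{1,+}(C_\lambda)\cong F$.
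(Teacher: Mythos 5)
Your proposal is correct and follows essentially the same route as the paper: reduce to a limit point of end points where $D\varphi$ is (up to the dihedral symmetry and a power of $\lambda$) the identity, then use the discreteness of the set of gap sizes of $C_\lambda\times C_\lambda$ together with the small-perturbation hypothesis on $D\varphi$ to force the affine description to extend rigidly across the adjacent gap rectangles, contradicting maximality; finally pass to standard rectangles and extract a finite clopen subcover by compactness. The paper phrases the two-dimensional extension step via the three maximal rectangular gaps $Q$, $Q_h$, $Q_v$ meeting at the corner of the maximal affinely-extended rectangle, which is the same local picture as your cross-shaped gap strips.
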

\begin{proof}
We can choose both $I_k$ and their images to be standard rectangles, as in the case of central Cantor sets $C_{\lambda}$. 

Let $c\in C$. Then $D_c\varphi= 
(\lambda^{j_{k,1}}\oplus \lambda^{j_{k,2}})\circ S_{b_k}^{m_k}$, which we denote by $A$ for simplicity in the proof.
We have to prove that there exists a neighborhood $U$ of $c$ such that: 
\begin{equation}
\varphi(x)= A(x-(\alpha_1,\alpha_2))+ \varphi(\alpha_1,\alpha_2), 
\qquad {\rm for }\quad x\in U\cap C.
\end{equation}
Such neighborhoods will cover $C$ and we can extract 
a finite subcovering  by clopen sets to get the statement.

This claim is true for endpoints $a=(\alpha_1,\alpha_2)$ of $C$. It is then 
sufficient to prove that whenever we have a sequence 
of endpoints $a_n\to a_{\infty}$ contained in a closed rectangle 
$U\subset [0,1]$ and a $\mathcal C^1$-diffeomorphism   
$\varphi: U\to \varphi(U)\subset [0,1]$ with $\varphi(C\cap U)\subset C$, 
there exists a neighborhood $U_{a_{\infty}}$ of $a_{\infty}$ and an affine 
function $\psi$ such that for large enough $n$ the following holds:
\[\varphi(x)=
\psi(x), \; {\rm for } \, x\in C_{\lambda}\cap U_{a_{\infty}}.\]

Around each left point $a_n$ there are affine maps  
$\psi_{a_n}:U_{a_n,k_n}\to [0,1]$ defining germs in $D(a_n,c_n)$, 
where $c_n=\varphi(a_n)$,  such that $c_n$ converge to $c_{\infty}=\varphi(a_{\infty})$ and 
\[\varphi(x)=
\psi_{a_n}(x), \; {\rm for } \, x\in C_{\lambda}\cap U_{a_n,k_n}.\]

We can further assume that $U_{a_n}\cap C_{\lambda}$ are clopen sets and we can take 
$U_{a_n}$ to be standard rectangles 
$[\alpha_{n,1},\beta_{n,1}]\times [\alpha_{n,2}, \beta_{n,2}]$ 
where $\beta_{n,i}$ are right points of $C_i$.

There is no loss of generality to assume that 
$D\psi_{a_n}\Bigr|_{C\cap U_{a_n}}$ is independent on $n$, 
say it equals $(\lambda^{m_1}\oplus \lambda^{m_2})S^{j}$. 
Replacing $\varphi$ by its inverse 
$\varphi^{-1}$ we can also assume that $m_1\leq 0$. 
Since $C_1$ is invariant by the homothety of factor 
$\lambda$ and center $0$, we can further reduce the problem to the case 
where $m_1=0$.  We can further assume that $m_2\leq 0$ by the same trick and 
finally get rid of the second diagonal component of the differential.  
Then, by continuity, we have $D_{a_{\infty}}\varphi=S^j$. 

Choose $n$ large enough so that 
$\parallel D_x\varphi(x)-\mathbf 1\parallel<\varepsilon$, for any $x$ in a square 
centered at $a_{\infty}$ and containing all $U_{a_n}$, with $n$ large enough.  
where  the exact value of $\varepsilon$ will be chosen later.   

Let now consider the maximal standard rectangle of the form  
$U'=[\alpha_{n,1}, \beta_1]\times [\alpha_{n,2}, \beta_2]$ 
to which we can extend $\psi_{a_n}$ to an affine function which coincides 
with $\varphi$ on $C\cap U'$. 

The endpoint $(\beta_1,\beta_2)$ belongs to the closure of three 
maximal rectangular gaps: the rectangle $Q$ which is opposite to $U'$ is 
a product of two gaps, while the other two $Q_v$ and $Q_h$  are products 
of gaps with one (vertical or horizontal) side of $U'$. 
Since $D_x\varphi$ is close to identity 
the image of the rectangular gaps are closed to rectangular gaps 
of approximatively the same sizes. 
Now, the images by $\varphi$ of the vertices of $Q$ are 
points of $C$ forming a rectangle, which is itself the product of two gaps.
Thus the sizes of this rectangle belong to the set 
$\{(\lambda-2)\lambda^{-n}; n\in \Z_+\} \times \{(\lambda-2)\lambda^{-n}; n\in \Z_+\}$. 
Since the ratios of two different lengths form a discrete 
set and $D_x\varphi$ is close to identity, the four points in the image form 
a rectangle congruent to $Q$. A similar argument holds now for the 
rectangles $Q_v$ and $Q_h$. This implies that $\psi_{a_n}$ 
can be extended to an affine function on a strictly larger rectangle, 
contradicting our assumptions. This proves the claim. 
\end{proof} 

This description shows that $\diff^{1,+}_{\R^2}(C)$ is isomorphic  
to $2V^{sym}$, namely the Brin's group decorated by $D_2$ (see 
\cite{brin1} for the non-decorated case).  Here $D_2$ is the group of the orientation preserving symmetries of the cube, namely the 
group of orthogonal matrices with integral entries and unit determinant.

 \begin{remark}
We can obtain smaller decoration by choosing self-similar Cantor sets with less symmetries. For instance,  
$\diff^{1,+}_{\R^{n}}(C)$ is isomorphic to Brin's group decorated by the positive isometry group of a 
rectangular parallelepiped with edges of different sizes, if $C=C_{\lambda_1}\times C_{\lambda_2}\times \cdots C_{\lambda_n}$, where $\lambda_i$ are pairwise distinct but commensurable, namely there exists $\alpha\in \R^*_+$ and $k_i\in \Z$ such that $\lambda_i=\alpha^{k_i}$, for all $i$. We expect a similar result when $\lambda_i$ are incommensurable. 
Moreover, by replacing each $C_{\lambda_i}$ by some non-invertible self-similar Cantor set, the corresponding 
group $\diff^{1,+}_{\R^{n}}(C)$ is isomorphic to Brin's group $nV$. 
\end{remark}

\begin{remark}
Following the arguments in the proof of  Theorem \ref{V-type} one shows that 
$\diff^{1,+}_{\R^{n+k}}(C)$, for $k\geq 1$, is Brin's group $nV^{\pm sym}$ decorated by the hyperoctahedral group $O_n$, 
namely the group of symmetries of the cube (possibly reversing the orientation). 
\end{remark}

\section{Examples and counterexamples}\label{examples}
\subsection{Nonsparse Cantor sets with uncountable 
diffeomorphism group}
Let $h:\R_+\to \R$ be a $C^\infty$-function satisfying
the following conditions: 
\begin{align*}
h(x)&=0, \qquad \text{for $0\leq x\leq 1$, $x\geq2$},
\\
h(x)&>0, \qquad\text{for $1<x<2$},
\\
h'(x)&>-1.
\end{align*}
Since the maps $g_j:[0,1]\to[0,1]$ given by:
\begin{equation}
 g_j(x)=x+2^{-2^j} h(2^j x), \;\; j\in \Z^*_+, 
\end{equation}
are strictly increasing,  they are 
smooth diffeomorphisms of the interval.  
The support of $g_j$ is $[2^{-j},2^{-j+1}]$ and hence the 
diffeomorphisms $g_j$ pairwise commute.
Their derivatives are of the form: 
 \[
 g'_j(x)=1+ 2^{j-2^j}h'(2^j x), 
 \]
and respectively 
\[g_j^{(k)}(x)=2^{kj-2^j}h^{(k)}(2^j x), \qquad{\rm for }\; k\geq 2.  \]

 Consider the group $R$ consisting of bounded infinite sequences 
$\mathbf m=m_1, m_2,\ldots$ of integers, endowed with 
the term-wise addition.

 There is a  map $\Theta:R\to \Diff^{0}([0,1])$ given by:
 \begin{equation}
 \Theta(\mathbf m)= \lim_{n\to \infty}g_1^{m_1}\circ g_2^{m_2}\circ\cdots
\circ g_n^{m_n}, 
\end{equation}
where $g_j^m$ is the $m$-fold composition of $g_j$. 
The order in the previous definition does not matter, as the maps 
commute. The limit map $\Theta(m)$ is immediately seen to be
a homeomorphism of $[0,1]$ which is a diffeomorphism outside $0$. 

Let first consider only those $\mathbf m$ where $m_j\in \{0,1\}$. Then we can compute first:
\[
\lim_{x\to 0} \Theta(\mathbf m)'(x)=1,\]
and then 
\[
\lim_{x\to 0} \Theta(\mathbf m)^{(k)}(x)=1, \qquad {\rm for }\; k\geq 2.\]
Therefore $\Theta(\mathbf m)$ is a $\mathcal C^{\infty}$ diffeomorphism
of $[0,1]$.  

Moreover any element of $R$ can be represented as a product of 
$\Theta(\mathbf m)$, with $\mathbf m$ of having only $0$ or $1$ 
entries. This implies that  $\Theta(R)\subset \Diff^{\infty}([0,1])$. 
Furthermore it is clear that $\Theta$ is injective, by looking at the 
factor corresponding to the first place where two sequences disagree. 
This implies that  $\Theta$ provides a faithful $\mathcal C^{\infty}$ action 
of $R$  by $\mathcal C^{\infty}$  diffeomorphisms on $[0,1]$.

The dynamics of each $g_j$ on its support $[2^{-j},2^{-j+1}]$ 
is of type north-south with repelling and attracting fixed points on the 
boundary. Pick up some 
$a_j\in (2^{-j},2^{-j+1})$, so that $b_j=g_j(a_j)>a_j$. 
Then the intervals $g_j^n((a_j,b_j))$ are all pairwise disjoint. 
If $C_{j}^0\subset [a_j,b_j]$ is some Cantor set, then the closure of its orbit, namely  
$C_j=\cup_{k=-\infty}^{\infty}g_j^k(C_j^0)\cup \{2^{-j},2^{-j+1}\}$ is a $g_j$-invariant Cantor subset 
of $[2^{-j},2^{-j+1}]$. Moreover, for any $n\neq 0$ 
the restriction $g_j^n\Bigr|_{C_j}$ cannot be identity, since $g_j^n$ is 
strictly increasing. 

Then their union $C=\cup_{j=1}^{\infty}C_j$ is a Cantor subset of $[0,1]$ 
and for $\mathbf m$ not identically $0$ we also have 
 $\Theta(\mathbf m)\Bigr|_{C}$ is not identity. 
This shows that the diffeomorphism group $\diff^{\infty}(C)$ contains $R$.  
In particular, $\diff^{\infty}(C)$ is uncountable.

This technique of construction is close to that involved in other classical constructions in the field,
as in \cite{Ts,N2,J}. In particular, such a group arises as the group of fragmentations of elements 
of the Grigorchuk-Maki group of intermediate growth acting by $\mathcal C^1$-diffeomorphisms of the interval from 
\cite{N2}.

\subsection{Nonsparse Cantor set with trivial diffeomorphism group}
Let $X$ be obtained by removing a sequence of intervals, as follows. 
At the first step we remove from $[0,1]$ the central interval 
of length $\frac{1}{4}$. At the $m$-th step we have 
$2^m$ intervals which we label, starting from the leftmost to the rightmost 
as $I_1^{(m)},I_2^{(m)}, \ldots, I_{2^m}^{(m)}$. 
We remove then from $I_j^{(m)}$ the central interval 
of length $2^{-2^{2^{m-1}-1+j}}$. 
The result of this procedure is a Cantor set $X$ which is not sparse.

Let $\varphi\in \Diff^{1}(\R,X)$. 
Suppose first that there exists a sequence $I_n$ of gaps approaching $0$ from the right side with the property that 
for every $n$ we have $J_n=\varphi(I_n)\neq I_n$.  Then there exists points $x_{I_n}\in I_n$ such that 
$\varphi'(x_{I_n})=|J_n|/|I_n|$. The sequence $x_{I_n}$ converges to $0$. Now lengths of gaps belong to the discrete set 
$\{ 2^{-2^n}, n\in \Z_+\}$ and there are not two gaps of the same length. Therefore 
any infinite sequence of lengths $|J_n|/|I_n|\neq 1$  has a subsequence which either converges to $0$ or is unbounded. 
This implies that $\varphi'(0)$ is either $0$ or infinite, which contradicts the fact that $\varphi$ was a diffeomorphism. 

It follows that for any sequence $I_n$ as above and large enough $n$ we have $\varphi(I_n)=I_n$. In particular $\varphi(0)=0$.
This holds for any left point of $X$ and hence $\diff^{1}(X)=1$. 

Let now $I,J\subset [0,1]$ be closed intervals intersecting $X$ intersecting along Cantor sets. 
The arguments above also show that there exists a diffeomorphism $\varphi:(I,X\cap I)\to (J,X\cap J)$ if and only 
$I=J$ and $\varphi|_{X\cap I}$ is the identity. From the proof of Theorem \ref{V-type} (see section \ref{section-V-type}) 
we deduce that $\diff^1 _M(X)=1$ for any manifold $M$ containing the $\mathcal C^1$-interval $[0,1]$.

\subsection{Sparse Cantor set with trivial diffeomorphism group}
Start  from the interval $I^{(0)}=[0,1]$
by removing a central gap $J_1^{(1)}$ of size 
$(1-\varepsilon)$. By recurrence at the $n$-th step we have 
$2^n$ intervals $I^{(n)}_j$, $j=1, \ldots, 2^n$, numbered from the left to 
the right. 
To go further we remove a central gap 
$J^{(n+1)}_j$ from $I^{(n)}_j$ of length 
$|J^{(n+1)}_j|=(1-\varepsilon^n)|I^{(n)}_j|$.   
The set so obtained is obviously a sparse Cantor set $C_0$.

Let $a\in C_0$. Let $b_n$ be the right endpoint of 
the interval $I^{(n)}_{j_n}$ to which $a$ belongs. 
Then set $(x_n,y_n)$ for the gap $J^{(n+1)}_{j_n}\subset I^{(n)}_{j_n}$. 
There is no loss of generality of assuming that 
$a<x_n<y_n<b_n$. Given $\varphi\in\Diff^1_a(\R,C_0)$, 
with $\varphi'(a)\neq 1$, there are infinitely many $n$ 
for which the gap $J^{(n)}_{j_n}$ is not fixed by $\varphi$. 
It follows that either $\varphi(y_n)<x_n$, or 
$\varphi(x_n)>y_n$,  for infinitely many $n$. 
By symmetry we can assume that the second alternative holds. 
Then 
\begin{equation} 
\frac{\varphi(x_n)-x_n}{x_n-a}\geq \frac{|y_n-x_n|}{|x_n-a|}
\geq \frac{(1-\varepsilon^n)|b_n-a|}{|x_n-a|}\geq 
\frac{(1-\varepsilon^n)|b_n-a|}{\varepsilon^n|b_n-a|}=
\frac{1-\varepsilon^n}{\varepsilon^n}. 
\end{equation}
Letting $n\to \infty$ we obtain that $\varphi'(a)=\infty$, contradiction. 
This proves that $\diff^{1,+}_a(C_0)=1$. 

From Remark \ref{trivial} we have  $\diff^{1,+}(C_0)=1$. Moreover, 
the proof of Theorem \ref{V-type} implies that $\diff^{1,+}_M(C_0)=1$ for any manifold $M$ containing 
the $\mathcal C^1$-interval $[0,1]$.

\vspace{0.2cm}
{\bf Another potential example}.
In order to convert the nonsparse example above $X$ into a 
sparse Cantor set with the same properties, we have to mix 
ordinary gaps and very small gaps. 
Start as above from the interval $I^{(0)}=[0,1]$
by removing a central gap $LG^{(1)}$ of size 
$\frac{1}{3}$ and two very small gaps 
each one centered within 
an interval component of $I^{(0)}-LG^{(1)}$, namely 
$SG^{(1)}_1$ and $SG^{(1)}_2$ of lengths 
$2^{-2^{\alpha}}$ and $2^{-2^{\alpha+1}}$, respectively. 
Here $\alpha$ is chosen so that 
\[ \frac{1}{3}-2^{-2^{\alpha}} > \frac{1}{6}.\]
We obtain at the next stage 
four intervals $I^{(1)}_1,I^{(1)}_2, I^{(1)}_3, I^{(1)}_4$, labeled from the 
left to the right.  

By recurrence at the $n$-th step we have 
$4^n$ intervals $I^{(n)}_j$, $j=1, \ldots, 4^n$. 
To go further we remove first a central gap 
$LG^{(n+1)}_j$ from $I^{(n)}_j$ of length 
$|LG^{(n+1)}_j|=\frac{1}{3}|I^{(n)}_j|$.  
Further we remove two very small gaps 
each one centered within 
an interval component of $I^{(n)}_j-LG^{(n)}_j$, namely 
$SG^{(n)}_{2j+1}$ and $SG^{(n)}_{2j+2}$ of lengths 
$2^{-2^{\alpha+j+4^n}}$ and $2^{-2^{\alpha+j+1+4^n}}$. 
Letting $n$ go to $\infty$ we obtain a $\frac13$-sparse Cantor set $MC$. 
We believe that  $\diff^{1,+}(MC)=1$.

\subsection{Split Cantor sets}
Two Cantor sets $C_i\subset \R^n$  are 
{\em locally smoothly nonequivalent} if for any $p_i\subset C_i$ there is 
no $\mathcal C^1$-diffeomorphism germ $(\R^n,C_1,p_1)\to (\R^n,C_2,p_2)$.

A Cantor set in $C\subset \R^n$ is said to be {\em smoothly split} if 
we can write $C=C_1\cup C_2$ as a union of two Cantor sets with $C_1$ and $C_2$ 
locally smoothly nonequivalent and contained in disjoint intervals. 

We have the following easy:
\begin{proposition}
Let $n\geq 1$ and $C\subset \R^n$ be a Cantor set which is smoothly split as $C_1\cup C_2$. 
 Then $\diff^{1,+}(C)=\diff^{1,+}(C_1)\times \diff^{1,+}(C_2)$. 
\end{proposition}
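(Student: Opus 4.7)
The plan is to show that the natural restriction map
\[ \rho : \diff^{1,+}(C) \to \diff^{1,+}(C_1) \times \diff^{1,+}(C_2), \qquad [\varphi] \mapsto ([\varphi|_{C_1}], [\varphi|_{C_2}]), \]
is a group isomorphism. Well-definedness comes down to showing that any $\varphi \in \Diff^{1,+}(\R^n, C)$ satisfies $\varphi(C_i) = C_i$ for $i=1,2$. If on the contrary there were $p \in C_1$ with $\varphi(p) \in C_2$, I would pick an open neighborhood $U$ of $p$ small enough that $U \cap C = U \cap C_1$ and $\varphi(U) \cap C = \varphi(U) \cap C_2$; this is possible because $C_1$ and $C_2$ lie in disjoint intervals and hence are separated by disjoint open sets in $\R^n$. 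The restriction of $\varphi$ to $U$ would then be a $\mathcal C^1$-diffeomorphism germ $(\R^n, C_1, p) \to (\R^n, C_2, \varphi(p))$, contradicting the local smooth non-equivalence of $C_1$ and $C_2$. Applying the same reasoning to $\varphi^{-1}$ yields $\varphi(C_i) = C_i$, so $\rho$ is a well-defined group homomorphism.

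Injectivity is immediate because $\varphi|_C$ is determined by its restrictions to the disjoint pieces $C_1$ and $C_2$. For surjectivity, given representatives $\Psi_i \in \Diff^{1,+}(\R^n, C_i)$ of arbitrary $\psi_i \in \diff^{1,+}(C_i)$, the aim is to manufacture a single $\Phi \in \Diff^{1,+}(\R^n, C)$ with $\Phi|_{C_i} = \psi_i$. Since the arcs $I_1, I_2$ containing $C_1, C_2$ are disjoint compact subsets of $\R^n$, they admit disjoint tubular neighborhoods $N_1, N_2$. The key technical step is to replace each $\Psi_i$ by a representative $\widetilde\Psi_i$ of the same class in $\diff^{1,+}(C_i)$ that is supported in $N_i$. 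Granting this, $\Phi := \widetilde\Psi_1 \circ \widetilde\Psi_2$ has factors with disjoint supports, hence commuting and together preserving $C = C_1 \sqcup C_2$, with $\Phi|_{C_i} = \widetilde\Psi_i|_{C_i} = \psi_i$.

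The main obstacle is this localization step. For $n = 1$ and orientation-preserving $\Psi_i$, monotonicity forces $\Psi_i$ to fix both $\min C_i$ and $\max C_i$, hence to preserve the interval $[\min C_i, \max C_i] \subset I_i$; a standard smooth interpolation, equal to $\Psi_i$ on this interval and to the identity outside a slightly larger sub-interval of $I_i$ contained in $N_i$, then produces $\widetilde\Psi_i$. For $n \geq 2$, I would exploit the product structure $N_i \cong I_i \times D^{n-1}$: by compactness of $C_i$ and continuity of $\Psi_i$ one finds a smaller tube $N_i' \subset N_i$ around $C_i$ with $\Psi_i(\overline{N_i'}) \subset N_i$, and then a standard interpolation along the normal $D^{n-1}$-fibres between $\Psi_i|_{N_i'}$ and the identity on $N_i \setminus \overline{N_i'}$ produces the desired $\widetilde\Psi_i$ of class $\mathcal C^1$, supported in $N_i$, still preserving $C_i$, and agreeing with $\Psi_i$ on $C_i$.
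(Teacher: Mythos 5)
Your proof is correct and takes essentially the same route as the paper's, which is only a three-sentence sketch: invariance of each $C_i$ under any $\varphi\in\Diff^{1,+}(\R^n,C)$ forced by the local smooth nonequivalence, and surjectivity via realizing an arbitrary pair by commuting diffeomorphisms supported in disjoint neighborhoods of the two pieces. You simply supply the localization details that the paper asserts without proof.
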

\begin{proof}
In this situation $C_i$ are contained into disjoint intervals $U_i$. 
Then diffeomorphisms preserving $C$ should also 
send each $C_i$ into itself. Furthermore all elements from 
$\diff^{1,+}(C_1)\times \diff^{1,+}(C_2)$ can be realized as classes of 
pairs of commuting diffeomorphisms supported in $U_i$. 
\end{proof}

According to Remark \ref{Hausdorff} the central Cantor sets 
$C_{\lambda}$ are pairwise locally smoothly nonequivalent. 
In particular the union $C_{\lambda}\cup 2+C_{\mu}$ 
of two distinct Cantor sets, one of which is translated by $2$ 
is a split Cantor set. It follows that 
\[\diff^{1,+}(C_{\lambda}\cup 2+C_{\mu})=\diff^{1,+}(C_{\lambda})\times \diff^{1,+}(C_{\mu})\cong 
F\times F,\]
for distinct $\lambda$ and $\mu$. 
It is not clear what would be $\diff^{1,+}(C_{\lambda}\cup C_{\mu})$ or $\diff^{1,+}(C_{\lambda}+ C_{\mu})$ 
(for those parameters for which the sum is still a Cantor set).

\subsection{Questions}
The countability of $\diff^1_M(C)$ relies on the description of the 
group of germs of the stabilizer $G_a$ of a point $a\in C$ for a  finitely generated group 
$G\subset \Diff^1(M,C)$: it is {\em cyclic} when 
$\dim M=1$ and $C$ is sparse and  (a finite extension of) a subgroup of $\Z^n$ 
if $\dim M=n$ and $C$ is a product of $n$ 
sparse Cantor subsets of the line. We think that a similar result 
holds for any sparse enough Cantor subset of $M$ in higher dimensions, 
not necessarily a product.  
One should note that if the action of a group $G\subset 
\Diff^{2}(S^1)$ admits a Markov partition (see \cite{DKN3,DKN2}) 
and an exceptional minimal set $C$ then $C$ must indeed be sparse.

It seems presently unknown whether for any  $\mathcal C^1$-locally discrete 
group $G$ of $\Diff^{2}(S^1)$ with an exceptional  
minimal set every point stabilizer should have a cyclic group of germs.  
Recall that a group  $G\subset \Diff^{2}(S^1)$  is $\mathcal C^1$-locally discrete if the restriction of 
the identity to any interval intersecting its minimal set is isolated in the 
$\mathcal C^1$-topology among the restrictions of elements of $G$. In 
particular groups $G\subset \Diff^{\omega}(S^1)$ with an exceptional minimal set 
are $\mathcal C^1$-locally discrete, as well as Fuchsian groups.  
One believes that the action of every  $\mathcal C^1$-locally discrete subgroup of 
$\Diff^{\omega}(S^1)$ has a Markov partition (see \cite{AFK}, Main Conjecture).  
  
A well-known conjecture of Dippolito (see \cite{Di}, 448--449) states  
that for a finitely generated group $G\subset \Diff^{2}(S^1)$ with a 
minimal exceptional $C$ such that the groups of germs of stabilizers are 
cyclic the Radon-Nikodym derivative of every element of $G$ (with respect 
to an invariant measure) should be locally constant. This suggests that 
our results describing the elements of $\diff^1_M(C)$ for Cantor sets associated to generic affine IFS 
might be extended to more general Cantor sets. For instance,  
when $C=\Lambda\Gamma\subset S^1$ is the limit set of a second kind 
Fuchsian group  $\Gamma$ then  $\diff^{1,+}_{M}(C)$ should also be 
isomorphic to one of the Greenberg generalizations  $T_{\Gamma}$ or $V^{\pm}_{\Gamma}$ of 
the Thompson groups associated to $\Gamma$ (see \cite{Gre}).  More generally, if $\Gamma\subset \Diff^{1}(M)$ is 
a finitely generated group having an exceptional minimal set $C$, 
the group $\diff^1_M(C)$ is closely related to the group of piecewise-$\Gamma$ homeomorphisms of $(M,C)$. 
Note that an exceptional minimal set of a  Denjoy $\mathcal C^1$-diffeomorphism (i.e. without periodic points and whose derivative has bounded variation) of the circle is not generic in our sense, as the spectrum of ratios of lengths of gaps contains $1$ in its closure (see \cite{Mcd,Po}). 
On the other hand Triestino conjectured that any finitely generated 
$\mathcal C^1$-locally discrete subgroup of $\Diff^{2}(S^1)$ is $\mathcal C^1$-semi-conjugate to a subgroup of a generalized 
subgroup $T_{\Gamma}$, where now $\Gamma\subset \Diff^{\omega}(S^1)$ is Gromov-hyperbolic and $\mathcal C^1$-locally discrete.
Similar questions arise for $\mathcal C^1$-locally discrete subgroups of 
$\Diff^{2}(M,C)$ in relation with the generalized groups 
$V^{\pm}_{\Gamma}$, associated to $\Gamma\subset \Diff^{\omega}(M,C)$.

Every couple of Cantor sets which are attractors of IFS 
arising from $\mathcal C^1$-diffeomorphisms of the line have arbitrarily small perturbations in the $\mathcal C^1$-topology 
which makes them disjoint and generically their arithmetic difference is still a Cantor set, according to a remarkable result of Moreira (\cite{Mo}). We don't know how the group $\diff^1_M(C)$ varies under a $\mathcal C^1$-perturbation of the IFS and in particular 
whether it might be larger than the Thompson-type group associated to the IFS.

The validity of some version of the Tits alternative for diffeomorphism groups has its counterpart 
both for $\diff^1_M(C)$ and  the smooth mapping class groups: does any finitely generated subgroup contains a free subgroup 
on two generators or else it has a finite orbit on the Cantor set $C$? This was recently 
settled in the affirmative by Hurtado and Militon (see \cite{HuM}) for $\mathcal M^1(M,C_{\lambda})$ 
where $C_{\lambda}$ is the standard ternary Cantor set. Whether a strong Tits alternative could hold for 
the groups associated to some Cantor sets comprises the question on finding the solvable subgroups of 
$\diff^1_M(C)$, which started to be investigated in \cite{BMNR}. 
Recall that the Thompson group $F$ does not contain any free non-abelian group though is 
not virtually solvable. Notice that, by slightly extending \cite{CJN}, every finitely-generated torsion-free nilpotent group can be made act on the interval with an invariant Cantor set, but we don't know whether there exists 
a non-virtually-abelian, nilpotent group of $\mathcal C^1$-diffeomorphisms of a generic Cantor set.

\end{document}